\documentclass[a4paper,11pt,twoside]{amsart}
\usepackage[english]{babel}
\usepackage[utf8]{inputenc}

\usepackage[a4paper,inner=3cm,outer=3cm,top=4cm,bottom=4cm]{geometry}
\usepackage{fancyhdr}
\usepackage{bold-extra}
\usepackage{ mathrsfs }

\usepackage{comment}
\usepackage{graphics,color}
\usepackage{aliascnt}
\usepackage[hidelinks]{hyperref}

\usepackage{amsmath}
\usepackage{amsfonts}
\usepackage{amssymb}
\usepackage{amsthm}
\usepackage{comment}
\usepackage{mathtools}
\usepackage{enumerate}
\usepackage{enumitem}

\newcommand{\str}{\operatorname{str}}
\newcommand{\psr}{\operatorname{psr}}
\newcommand\sml{\operatorname{sml}}

\newcommand{\wh}{\widehat}
\newcommand{\wt}{\widetilde}
\newcommand{\Span}{\operatorname{Span}}
\newcommand{\CP}{\mathcal{P}}

\newcommand{\CR}{\mathcal{R}}
\newcommand{\CL}{\mathcal{L}}
\newcommand{\eps}{\varepsilon}
\newcommand{\ve}{\mathbf}

\def\F{\mathbb{F}}

\def\R{\mathbb{R}}
\def\C{\mathbb{C}}
\def\Z{\mathbb{Z}}
\def\E{\mathbb{E}}
\def\P{\mathcal{P}}
\def\L{\mathcal{L}}
\def\Q{\mathbb{Q}}

\def\T{\mathbb{T}}
\def\le{\leqslant}

	\newtheorem{thm}{\textbf{Theorem}}[section] 
	
	\newtheorem{prop}[thm]{\textbf{Proposition}}
	\newtheorem{lmm}[thm]{\textbf{Lemma}}

	\theoremstyle{definition}
	\newtheorem{defi}[thm]{\textbf{Definition}}  
	
\numberwithin{equation}{section}

\begin{document}
\author{Xuancheng Shao}
\address{Department of Mathematics, University of Kentucky, Lexington, KY 40506, USA}
\email{xuancheng.shao@uky.edu}

\author{Weiwen Zhang}
\address{School of Mathematical Sciences, University of Science and Technology of China, Hefei 230026, China}
\email{llcwwzhang@mail.ustc.edu.cn}

\title{Simultaneous popular differences for polynomial and linear progressions in finite fields}	
	
	\maketitle
	\begin{abstract}
		We study simultaneous popular differences for polynomial progressions and linear progressions over finite fields \(\mathbb F_p\). For certain families of polynomial progressions of algebraic complexity $1$, such as $(x, x+y, x+2y)$ and $(x, x+y^3, x+2y^3)$, we prove the existence of $y \neq 0$ which is popular simultaneously for both progressions. This extends previous results by Conlon, Dong and Hong who considered complexity $0$ progressions. On the other hand, we  construct counterexamples showing that simultaneous popular differences are not guaranteed to exist for $(x, x+y, x+2y)$ and $(x, x+y^2, x+2y^2)$, or for $(x, x+y, x+2y)$ and $(x, x+y, x+3y)$, or for $(x, x+y, x+2y)$ and $(x,x+y,x+2y,x+3y)$, answering questions posed by Conlon, Dong and Hong.
	\end{abstract}

	\section{Introduction}

Denote by $\Z_{0}[y]$ the subset of $\Z[y]$ consisting of polynomials with zero constant term. Let $\P=\{P_{1},\dots,P_{k}\}$ be a collection of nonzero polynomials in $\Z_{0}[y]$. We consider the polynomial progression associated with $\P$:
	\begin{equation}\label{eq:poly-prog}
	x,x+P_{1}(y),\cdots,x+P_{k}(y).
\end{equation}
The polynomial Szemer\'edi theorem of Bergelson and Leibman \cite{BL96}, which extends Szemer\'edi's theorem from arithmetic progressions to polynomial progressions, asserts that every subset of the integers of positive upper density contains a progression of the form \eqref{eq:poly-prog} with $y\ne0$. In this paper we work over $\F_p$, with the polynomials $\CP$ fixed and $p\to\infty$.

\subsection{Popular differences}

Let $\P=\{P_{1},\dots,P_{k}\}$ be a collection of nonzero polynomials in $\Z_{0}[y]$.  For a function $f: \F_p\rightarrow\C$ we define
$$
I_{\CP}(f;y) = \E_{x \in \F_p} f(x) \prod_{i=1}^k f(x + P_i(y))
$$
for $y \in \F_p$. Thus for $A \subset \F_p$ with density $\alpha$, the quantity $I_{\CP}(1_A;y)$ gives the normalized count for the number of polynomial progressions in $A$ with a given difference parameter $y$, and  the expected value of it in a random set $A$ of density $\alpha$ is $\alpha^{k+1}$ (assuming that $0, P_1(y),\cdots,P_k(y)$ are distinct).
If some $y \in \F_p\setminus\{0\}$ satisfies
$$
			I_{	\mathcal{P}}(1_A;y) \geq \alpha^{k+1}-\eps
$$
for some small $\eps > 0$, then we informally refer to $y$ as a \emph{popular difference} and say $\CP$ has the popular difference property.  Thus if $y$ is a popular difference then the progressions can occur with approximately their random-set frequencies at the difference parameter $y$.

The problem of determining the existence of popular differences started with linear progressions.
Green \cite{G05} proved that the $3$-AP pattern $x, x+y, x+2y$ has the popular difference property, using arithmetic regularity. Green and Tao \cite{GT10} proved the analogous result for $4$-APs. In fact, there is a complete classification of linear progressions satisfying the popular difference property.
Combined with the counterexamples of Sah, Sawhney and Zhao \cite{SSZ21}, these results show that a linear progression of the form
$$
x, x+c_1y, \cdots, x+c_ky,
$$
where $c_1<\cdots<c_k$ are distinct positive integers, has the popular difference property if and only if $k \leq 2$, or if $k=3$ and it is an additive quadruple (i.e. $c_3=c_1+c_2$). See also \cite{FP19} for quantitative popular differences in vector spaces and \cite{FPZ} for related questions concerning common and Sidorenko linear equations.

For polynomial progressions, it is known that a polynomial progression with algebraic complexity at most $1$ satisfies the popular difference property. 
The notion of complexity, in the setting of polynomial progressions over finite fields, was introduced and studied by Kuca \cite{K21,K23}.

\begin{defi}[Algebraic relations and algebraic complexity]
	  Let $\P=\{P_{1},\dots,P_{k}\}$ be a collection of nonzero polynomials in $\Z_{0}[y]$. An \emph{algebraic relation} of degree $d$ satisfied by the polynomial progression \eqref{eq:poly-prog}  is a polynomial tuple $(Q_{0},\dots,Q_{k})\in\R[u]^{k+1}$ with $\max(\deg Q_0,\cdots,\deg Q_k) = d$ satisfying
		\begin{equation*}
			Q_{0}(x)+Q_{1}(x+P_{1}(y))+\dots+Q_{k}(x+P_{k}(y))=0.
		\end{equation*} 
The \emph{algebraic complexity} of $\CP$ is defined to be the smallest non-negative integer $s$ such that any algebraic relation satisfied by the polynomial progression has degree at most $s$.
	\end{defi}

Later we will also use the notion of algebraic complexity of a collection of linear forms in several variables $\ve{y} = (y_1,\cdots,y_m)$. The definition in this setting is the same with the scalar $y$ replaced by the vector $\ve{y}$.

In particular, $\CP$ has algebraic complexity $0$ precisely when the polynomials in $\P$ are linearly independent, and $\CP$ has algebraic complexity at most $1$ when there are no quadratic algebraic relations. Examples of complexity-one progressions include
\[
(x,x+y,x+2y),\ \
(x,x+y,x+y^2,x+y+y^2),
\ \
(x,x+y,x+2y,x+y^3,x+2y^3).
\]
On the other hand, the progression $x, x+y, x+2y, x+y^2$ has complexity $2$.

For $\CP = \{P_1,\cdots,P_k\}$ with complexity $0$, Peluse \cite{P18,P19} proved that the total number of polynomial progressions in any subset $A \subset \F_p$ with density $\alpha$ is $(\alpha^{k+1} + O(p^{-c}))p^2$ for some constant $c = c(\CP) > 0$. This immediately implies the existence of popular differences by the pigeonhole principle. Kuca \cite{K21,K23} extended the analysis to polynomial progressions $\CP$ of complexity $1$ and established the popular difference property for all such $\CP$. See \cite[Theorem~1.16(iii)]{K23}.

\begin{thm}\label{ac1}
	 Let $\P$ be a finite collection of nonzero polynomials in $\Z_{0}[y]$ with algebraic complexity at most $1$. Then for any $\eps>0$ and $A\subset\F_{p}$ with density $\alpha>0$, there are at least $cp$ values of $y \in \F_p$ for some constant $c = c(\P, \eps) > 0$, such that
	 $$
	 I_{\CP}(1_A;y) \geq \alpha^{|\CP|+1}-\eps.
	 $$
	\end{thm}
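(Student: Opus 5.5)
The plan is to follow Green's arithmetic-regularity strategy, as refined by Kuca for complexity-one polynomial progressions. First I would reduce the count to a Fourier-uniformity statement. Since $\CP$ has algebraic complexity at most $1$, Kuca's results \cite{K21,K23} give a control result: for bounded $f_0,\dots,f_k:\F_p\to\C$,
\[
\Bigl|\E_{y\in\F_p}\E_{x}f_0(x)\prod_i f_i(x+P_i(y))\Bigr| \ll \min_j\|f_j\|_{U^2}^{c}+O(p^{-c}).
\]
I also want a weighted variant, in which $y$ is restricted to a Bohr-type set of the form $\{y: \|\xi P_i(y)/p\|\text{ small}\}$. So the $U^2$ norm (equivalently $\|\hat f\|_\infty$) controls progression counts even after weighting by such a set.

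Second, I would decompose $1_A=f_{\str}+f_{\sml}+f_{\psr}$ via the $U^2$ arithmetic regularity lemma. Here $f_{\str}=1_A*\mu_B$ is an average over a Bohr set $B=B(S,\rho)$ of rank $|S|\le M(\eps)$; it takes values in $[0,1]$ and has mean $\alpha$. The term $f_{\sml}$ has small $L^2$ norm, and $f_{\psr}$ has $U^2$ norm at most $\eta(M)$, where $\eta$ decays arbitrarily fast relative to the complexity $M$.

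Next I would choose the popular $y$ from the set
\[
Y=\{y\neq 0:\ \|\xi P_i(y)/p\|\le \delta \ \text{for all }\xi\in S,\ i\le k\}.
\]
This is a polynomial Bohr set. By Weyl equidistribution of the vector $(\xi P_i(y)/p)$, or simply by pigeonhole over the torus $\T^{k|S|}$, it has size at least $c(\delta,M)p$. For $y\in Y$, each $f_{\str}(x+P_i(y))$ is close to $f_{\str}(x)$ for most $x$, because $f_{\str}$ is nearly invariant under shifts by elements of a smaller Bohr set. The structured contribution is therefore about $\E_x f_{\str}(x)^{k+1}\ge(\E f_{\str})^{k+1}=\alpha^{k+1}$ by Hölder/Jensen. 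This is exactly where the lower bound $\alpha^{k+1}$ comes from.

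Expanding the product, the remaining terms all involve at least one $f_{\sml}$ or $f_{\psr}$. I would average these over $y\in Y$ rather than over a single $y$. The terms containing $f_{\sml}$ are bounded by Cauchy--Schwarz, using $|Y|\gg p$ and the smallness of $\|f_{\sml}\|_2$. The terms containing $f_{\psr}$ are bounded by the weighted complexity-one control, provided $\eta(M)$ is chosen much smaller than $c(\delta,M)$. Together these give $\E_{y\in Y}I_\CP(1_A;y)\ge\alpha^{k+1}-\eps/2$. Since each $I_\CP\le1$, a further pigeonhole yields at least $c'|Y|\gg_{\eps,\CP}p$ values of $y$ with $I_\CP(1_A;y)\ge\alpha^{k+1}-\eps$.

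The main obstacle is the weighted $U^2$ control. Kuca's PET/degree-lowering argument controls the unweighted average over $y$. To weight by $1_Y$, I would Fourier-expand $1_Y$ as a function of the phases $\xi P_i(y)/p$, smoothing it with a Lipschitz cutoff on the torus. Each resulting term is an average over $y$ twisted by $e(\sum c_{\xi,i}\xi P_i(y)/p)$, and these twists can be absorbed into the functions $f_i$ by modulating them with linear phases. Modulation preserves the $U^2$ norm, so Kuca's theorem applies term by term. The price is a loss depending on $M$ and $\delta$, which the fast decay of $\eta$ absorbs.
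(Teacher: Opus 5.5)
Your argument is correct in outline, but it takes a different route from the paper. The paper does not prove Theorem~\ref{ac1} itself; it quotes it from \cite[Theorem~1.16(iii)]{K23}. Its own machinery recovers it as the case $\ell=1$ of Theorem~\ref{Main}, in two steps. First, Kuca's counting lemma (Theorem~\ref{counting lem}), fed into the moment comparison of Proposition~\ref{joint}, transfers the problem via Theorem~\ref{poly-to-linear} to the linear system $\CL$ in $m$ variables. Second, Proposition~\ref{prop:linear-1} runs the $U^2$-regularity argument on the linear Bohr box $B^m$, using the product weight $\mu$ and the twisted generalized von Neumann theorem, which rests on Manners' result.

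You never lift to $\F_p^m$. You localize $y$ itself to the polynomial Bohr set $Y$ and use $U^2$-control of the polynomial count directly; this control is in \cite{K23}, or follows from Theorem~\ref{counting lem} plus Manners. Your twist absorption is simpler than Proposition~\ref{prop:twisted-gvn}, because the phases $\xi P_i(y)$ automatically lie in the span of the $P_i$.

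For a single progression your route is shorter, and it yields popular $y$ with every $P_i(y)$ in a Bohr set. The paper's route keeps all regularity work on linear Bohr sets, where size and near-invariance are elementary. More importantly, Proposition~\ref{joint} transfers the joint distribution of several counts, and that is what reduces the simultaneous results to linear problems in separated variables. Averaging over $Y$ makes each count popular on a positive proportion of $Y$, but says nothing about the intersection.

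Two steps need repair.

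\emph{The size of $Y$.} The bound $|Y|\gg_{\delta,M}p$ does not follow from pigeonhole over the torus, as you suggest, since $P_i(y_1)-P_i(y_2)\neq P_i(y_1-y_2)$ for nonlinear $P_i$. Nor is the vector $(\xi P_i(y)/p)$ equidistributed when the $\xi P_i$ are linearly dependent mod $p$, which already happens for $\CP=\{y,2y\}$. Instead, take $\phi(t)=\max(0,1-\|t\|_{\T}/\delta)$, so that $\widehat\phi\geq0$, $\widehat\phi(0)=\delta$ and $\sum_n\widehat\phi(n)=1$, and set
\[
W(y)=\prod_{\xi\in S,\ 1\le i\le k}\phi\Big(\frac{\xi P_i(y)}{p}\Big).
\]
Expanding $W$, each frequency vector $\ve{n}=(n_{\xi,i})$ carries a nonnegative coefficient times $\E_y e_p\bigl(\sum n_{\xi,i}\xi P_i(y)\bigr)$. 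This average equals $1$ if the polynomial vanishes mod $p$, and is $O(p^{-1/2})$ otherwise by Weil's bound. Hence $\E_y W\geq\delta^{k|S|}-O(p^{-1/2})$.

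The same $W$ can serve as your smoothed weight. Its Fourier coefficients have total mass $1$, so the twisted $f_{\psr}$ terms cost only $O(\|f_{\psr}\|_{U^2}^{c})+o(1)$, and no Lipschitz approximation step is needed.

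\emph{The $f_{\sml}$ terms.} Bound these pointwise in $y$, using $\E_x|f_{\sml}(x+P_j(y))|\le\|f_{\sml}\|_2\le\eps$; the fact that $|Y|\gg p$ plays no role here. Any estimate that passes from $\E_{y\in Y}$ to $\E_y$ at a cost of $p/|Y|$ would fail. The reason is that the bound $\|f_{\sml}\|_2\le\eps$ is fixed before $M$ and $\delta$ are determined, so it cannot absorb a loss depending on them.
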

	
In the integer setting, popular differences results for polynomial progressions were obtained in \cite{LM13} for two-point patterns, in \cite{SW25} for distinct degree polynomials, and in \cite{PPS24} for a two-dimensional nonlinear Roth pattern.

\subsection{The existence of simultaneous popular differences}

Conlon, Dong and Hong \cite{CDH} introduced the following simultaneous popular difference problem. Given two or more polynomial progressions, is it possible to find a nonzero $y$ which is a popular difference simultaneously for all these progressions?
For a collection $\P$ of linearly independent polynomials in $\Z_0[y]$, they proved the existence of a simultaneous popular difference for all subprogressions of $\CP$.

In this paper, we initiate the study of simultaneous popular differences for certain polynomial progressions of complexity $1$. All spans below are over $\Q$.

\begin{thm}\label{Main}
		Let $\P$ be a finite collection of nonzero polynomials in $\Z_{0}[y]$ with algebraic complexity at most $1$. Let $\P_{1},\dots,\P_{\ell}\subset\P$ be subsets satisfying
$$
\Span(\CP_1\cup\cdots\cup \CP_\ell) = \Span(\CP_1)\oplus \cdots\oplus \Span(\CP_\ell).
$$		
		Then for any $\eps>0$ and $A\subset\F_{p}$ with density $\alpha>0$, there are at least $cp$ values of $y \in \F_p$ for some constant $c = c(\CP,\eps) > 0$, such that
$$
I_{\CR}(1_A;y) \geq \alpha^{|\CR|+1} - \eps
$$
simultaneously for all $\CR \in \{\CP_1,\cdots,\CP_{\ell}\}$ and all $\CR\subset \CP$ with algebraic complexity $0$.
	\end{thm}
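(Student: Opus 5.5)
We follow the arithmetic regularity strategy of Green and of Kuca. The first step is to apply an arithmetic regularity lemma in $\F_p$ adapted to complexity $1$. Since $\CP$ has algebraic complexity at most $1$, Kuca's work \cite{K21,K23} gives that $I_{\CR}(\cdot;y)$, averaged over $y$ with any bounded-complexity weight in $y$, is controlled by the $U^2$ norm for every $\CR\subset\CP$. Concretely, we decompose $1_A = f_{\str} + f_{\sml} + f_{\psr}$. Here $f_{\str}$ is a $[0,1]$-valued function measurable with respect to a Bohr set of bounded rank $d$ and radius $\rho$. The term $f_{\sml}$ has small $L^2$ norm, and $f_{\psr}$ has tiny Fourier coefficients compared with $d,\rho$.

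The second step is to restrict $y$ to a structured set. We choose $y$ so that $P(y)$ lies in a small Bohr neighbourhood $B$ of $0$ for every $P\in\CP$; for instance we require $P_i(y)\in B(S,\rho')$ with $\rho'\ll\rho$. On this set, the main term $I_{\CR}(f_{\str};y)$ is close to $\E_x f_{\str}(x)^{|\CR|+1}$, because the shifts barely move $f_{\str}$. By H\"older this is at least $\alpha^{|\CR|+1}$, and it is the same lower bound simultaneously for every $\CR$. The error terms involving $f_{\sml}$ are handled by averaging over $y$ in this set and using Cauchy--Schwarz. The terms involving $f_{\psr}$ are handled by the complexity-$1$ $U^2$-control of Kuca, applied to the weighted $y$-average with weight $\prod_i 1_{B}(P_i(y))$ (smoothed). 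Expanding this weight in Fourier reduces it to counting $\CP$-progressions twisted by phases $e(\xi\cdot P(y))$. Complexity at most $1$ still gives $U^2$ control here, and the number of $y$ in the set is $\gg_{d,\rho'} p$ by equidistribution of polynomial sequences in $\F_p$ (Weyl/Weil bounds).

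The obstacle is that $f_{\sml}$ is only small in $L^2$. Averaging gives $I_{\CR}(1_A;y)\ge\alpha^{|\CR|+1}-\eps$ for an average $y$ in the Bohr-type set, but the simultaneity requires more than this. An average bound does not give one $y$ good for all $\CR$ at once unless the deficit is summed. The standard fix, as in Green's argument, is to show that $\E_y w(y)\,|I_{\CR}(1_A;y)-I_{\CR}(f_{\str};y)|$ is small for each $\CR$ separately. We then sum over the boundedly many $\CR$ and use Markov's inequality. This needs control of $|I_{\CR}|$ differences in absolute value, not just signed averages. We would obtain it via Cauchy--Schwarz in $y$, which squares the pattern. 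For complexity-$0$ $\CR$ the squared pattern is again controlled by $U^2$, using linear independence as in Peluse/Conlon--Dong--Hong.

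The hardest point is exactly this squaring step for the $\CP_j$, which may have complexity $1$. Here the direct-sum hypothesis is what we use. Because $\Span(\CP_1),\dots,\Span(\CP_\ell)$ are independent, we would instead bound the product $\prod_j (\cdot)$ jointly. Alternatively, we show that the joint distribution of $(I_{\CP_1}(f_{\str};y),\dots)$ and the corrections decouples. Our first attempt would be to treat $\CP_1\cup\dots\cup\CP_\ell\subset\CP$ as a single complexity-$\le1$ family. For each $j$, Cauchy--Schwarz is performed only in the variables of $\CP_j$, and the independence of the spans makes the doubled system $\CP_j\cup(\CP_j+\text{shift})$ still of complexity $\le 1$, hence $U^2$-controlled. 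This part we expect to require the most care.
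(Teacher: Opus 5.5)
There is a genuine gap, at exactly the step you flag as hardest, and the proposed repair cannot work.

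\textbf{Where it fails.} For a complexity-$1$ family $\CP_j$ it is false that $\E_y w(y)\,|I_{\CP_j}(1_A;y)-I_{\CP_j}(f_{\str};y)|$ is small. So no Cauchy--Schwarz plus Markov argument can give almost-all control for such a family. Already for $\ell=1$ and $\CP_1=\{y,2y\}$, where the direct-sum hypothesis is vacuous, squaring in $y$ gives the system $x,x+y,x+2y,x+z,x+y+z,x+2y+z$. This system satisfies the quadratic relation $x^2-2(x+y)^2+(x+2y)^2=(x+z)^2-2(x+y+z)^2+(x+2y+z)^2$, so it has complexity $2$ and is not $U^2$-controlled.

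Concretely, take the function $q$ from the proof of Lemma \ref{lem51}. Then $q-\frac12$ has all Fourier coefficients $O(p^{-1/2})$, so one may take $f_{\str}=\frac12$ and $f_{\psr}=q-\frac12$. Yet $\E_x q(x)q(x+y)q(x+2y)-\frac18=\eps^3[-\cos(4\pi y^2/p)+\frac12\cos(8\pi y^2/p)]+O(p^{-1/2})$, which is $\gg\eps^3$ in absolute value for a positive proportion of $y$. A random set containing each $x$ independently with probability $q(x)$ behaves the same way. The direct-sum hypothesis only concerns how different $\CP_j$'s interact, so it cannot lower the complexity of the doubled system of a single $\CP_j$.

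\textbf{What is missing.} For a complexity-$1$ family the only usable input is a signed weighted bound, such as $\E_{\ve y}\mu(\ve y)I_{\CL}(1_A;\ve y)\ge\alpha^{|\CL|+1}-O(\eps)$ (Proposition \ref{prop:linear-1}). This yields a positive proportion of popular differences, not almost all. You need a mechanism forcing these positive-proportion sets for different $j$ to intersect, and working in the single variable $y$ you have none. The paper supplies it in two steps.

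First, Theorem \ref{poly-to-linear} (via Proposition \ref{joint}) applies Kuca's counting lemma to the mixed moments $\E_y\prod_a I_{\CR_a}(f;y)^{n_a}$. These moments are themselves $\CP$-progression averages of products of shifted copies of $f$. The conclusion is that $(I_{\CR_a}(1_A;y))_a$ for $y\in\F_p$ has the same joint distribution as $(I_{\CL_a}(1_A;\ve y))_a$ for $\ve y\in\F_p^m$. Weierstrass approximation then transfers counts of simultaneous popular differences from the linear model back to $y$.

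Second, in the linear model the direct sum allows a change of variables after which each $\CL_j$ depends only on a block $\pi_j$ of coordinates, with the blocks disjoint. With the product weight $\mu(\ve y)=\prod_r w(y_r)$, the good set for each $\CL_j$ is a cylinder over $\pi_j$ of size $\gg_{|\Gamma|,\rho}\eps p^m$ inside $B^m$. These cylinders intersect in $\gg_{|\Gamma|,\rho}\eps^{\ell}p^m$ points.

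Your Cauchy--Schwarz argument is then needed only where it is valid, namely for the complexity-$0$ subfamilies. These are popular for all $\ve y\in B^m$ outside a set of size $O(p^m/(\eps^2\eta(|\Gamma|+\rho^{-1})))$ (Proposition \ref{prop:linear-0}), which is negligible once $\eta$ grows fast enough.
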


For example, taking $\P=\{y,2y,y^3,2y^3\}$, $\CP_1 = \{y,2y\}$, and $\CP_2 = \{y^3,2y^3\}$ gives simultaneous popular differences for
\[
(x,x+y,x+2y),\ \ (x,x+y^3,x+2y^3), \ \ (x, x+y, x+y^3).
\]
The direct sum condition in the hypothesis of Theorem \ref{Main} is not a necessary condition. In Theorems \ref{Main2} and \ref{Main3} we prove the simultaneous popular difference property for certain families of polynomial progressions not satisfying the direct sum hypothesis.

The next result concerns copies of the same linear pattern evaluated at different polynomials, and all the unions of the copies. 

\begin{thm}\label{Main2}
Let $\CP_1,\cdots,\CP_m$ be finite collections of nonzero polynomials in $\Z_0[y]$ of the form
$$
\CP_i = \{L_1(Q_i(y)), \cdots, L_k(Q_i(y))\}
$$
for each $1 \leq i \leq m$, where $Q_1,\cdots,Q_m \in \Z_0[y]$ are linearly independent polynomials and $L_1,\cdots,L_k \in \Z_0[y]$ are linear forms. Suppose that $\CP_1\cup\cdots\cup\CP_m$ has
 algebraic complexity at most $1$. Then for any $\eps>0$ and $A\subset\F_{p}$ with density $\alpha>0$, there are at least $cp$ values of $y \in \F_p$ for some constant $c = c(\CP_1,\cdots,\CP_m,\eps) > 0$, such that
$$
I_{\CP_I}(1_A;y) \geq \alpha^{|\CP_I|+1} - \eps
$$
simultaneously for all nonempty $I \subset \{1,2,\cdots,m\}$, where $\CP_I = \cup_{i \in I}\CP_i$.
	\end{thm}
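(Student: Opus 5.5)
Throughout, $\CP = \CP_1\cup\cdots\cup\CP_m$. I would follow the arithmetic-regularity approach used for Green's popular-difference theorem, adapted to complexity-one polynomial progressions.

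\textbf{Step 1: decomposition and reduction to a Bohr model.} Write $1_A = f_{\str} + f_{\sml} + f_{\psr}$ using an arithmetic regularity lemma over $\F_p$, with a growth function to be chosen later. Here:
\begin{itemize}
\item $f_{\str}(x) = F(\theta_1 x,\dots,\theta_d x)$ is a Lipschitz function of a bounded number of phases $\theta_j \in \F_p$. It takes values in $[0,1]$ and has mean $\alpha$.
\item $f_{\sml}$ is small in $L^2$.
\item $f_{\psr}$ has tiny $U^2$ norm.
\end{itemize}
Since $\CP$ has complexity at most $1$, the counting machinery behind Theorem~\ref{ac1} (Kuca's results) applies. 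After averaging over $y$ in any set of density $\gg$ (a function of the regularity parameters), each count $I_{\CP_I}(1_A;y)$ equals $I_{\CP_I}(f_{\str};y)$ up to a small error. So it suffices to find many $y$ for which all $I_{\CP_I}(f_{\str};y)$ are large, and these $y$ must form a set whose density is bounded below in terms of the regularity data only.

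\textbf{Step 2: equidistribution of the phases.} Set $z_i = Q_i(y)$ and consider the map
$$y \mapsto (\theta_j Q_i(y))_{i\le m,\ j\le d} \in \T^{md}.$$
The $Q_i$ are linearly independent and have zero constant term. Weyl/Peluse-type equidistribution then shows this map is equidistributed on the subtorus determined by the linear relations among the $\theta_j$, with each $Q_i$ contributing an independent copy. Fix small neighbourhoods $U_i$ of suitable points. The set of $y$ with $(\theta_j Q_i(y))_j \in U_i$ for all $i$ then has density roughly $\prod_i \mu(U_i)$, and this is bounded below.

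For such $y$,
$$I_{\CP_I}(f_{\str};y) \approx \E_x F(\theta x)\prod_{i\in I}\prod_{l=1}^k F\bigl(\theta x + L_l(t_i)\bigr),$$
where $t_i$ ranges over $U_i$.

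\textbf{Step 3: choosing the $U_i$.} Choose every $U_i$ to be a small neighbourhood of $0$. Then $F(\theta x + L_l(t_i)) \approx F(\theta x)$, so
$$I_{\CP_I}(f_{\str};y) \approx \E_x f_{\str}(x)^{1+k|I|} \ge \alpha^{1+k|I|}$$
by H\"older, and this holds for all $I$ simultaneously. This is why the same neighbourhood works for every union: the choice does not depend on $I$.

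\textbf{Expected obstacle.} The hard step is justifying the counting lemma of Step 1 for every $\CP_I$ restricted to the Bohr-type set of $y$ from Step 2. The error terms must be controlled uniformly over this structured set of $y$, not only on average over all of $\F_p$. I would handle this by inserting the indicator of the Bohr condition on $y$, a Lipschitz function of $(\theta_j Q_i(y))$, as a weight. I would then expand it in Fourier series and absorb the resulting phases $e(\xi Q_i(y))$ into the complexity-one counting argument. This uses that adding these linear-in-$Q_i$ characters does not raise the complexity above $1$, which is where the hypothesis on $\CP$ enters.

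The parameters then have to be ordered so that the regularity error beats the density of the Bohr set. The standard fix is to take the growth function in the regularity lemma large relative to the Bohr density.
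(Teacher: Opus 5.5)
There is a genuine gap. It sits in the step from separate averages to a common $y$, which is the whole content of the theorem.

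\textbf{What your method actually gives.} Expanding a Bohr-smooth weight $\nu$ in Fourier series and invoking complexity-one counting is exactly the mechanism of Proposition~\ref{prop:linear-1} with Proposition~\ref{prop:twisted-gvn}. It yields, for each $I$ \emph{separately}, a bound $\E_y\nu(y)I_{\CP_I}(1_A;y)\ge\alpha^{|\CP_I|+1}-O(\eps)$. It does not give control ``uniformly over the structured set''. For complexity-one patterns the contribution of $f_{\psr}$ averages out against such weights, but it is not small for individual $y$, and making $\|\wh{f_{\psr}}\|_\infty$ small cannot make it so.

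For example, let $A$ be a random set whose density function is $1/2$ plus a small combination of quadratic phases $e_p(nx^2)$, as in Section~\ref{sec:nspd1}. Then $f_{\str}\approx 1/2$. Yet by the computation in Lemma~\ref{lem51}, the $3$-AP count at $y$ is $1/8$ plus a fixed multiple of an oscillating function of $y^2$. This deviation does not shrink however fast you take the growth function.

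\textbf{Consequences for your argument.} Your Step~1 claim, that $I_{\CP_I}(1_A;y)\approx I_{\CP_I}(f_{\str};y)$ after averaging over \emph{any} set of $y$ of bounded-below density, is false: average over the $y$ where the deviation is negative. From the separate weighted averages you only obtain sets $S_I$ of popular differences of positive weighted density. Nothing forces $\bigcap_I S_I\neq\emptyset$. Anticorrelated fluctuations of this kind are precisely the mechanism behind the constructions in Sections~\ref{sec:nspd1} and~\ref{sec:nspd3}.

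Your remark that the choice of the $U_i$ does not depend on $I$ concerns only $f_{\str}$. A warning sign is that your argument never uses that the $\CP_i$ are copies of one pattern $\{L_1,\dots,L_k\}$. It also uses the complexity hypothesis on the union only for single-pattern counting.

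\textbf{The two missing ingredients.}

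First, a decoupling step in which complexity $\le 1$ of the union is genuinely used. The paper does this in Proposition~\ref{joint} and Theorem~\ref{poly-to-linear}. It applies Kuca's counting lemma to the single progression $\CP_1\cup\cdots\cup\CP_m$, with functions that are products of shifts of $1_A$. This shows that all mixed moments $\E_y\prod_I I_{\CP_I}(1_A;y)^{n_I}$ agree up to $o(1)$ with those of $I_{\CL_I}(1_A;\ve y)$. Here $\CL_I=\{L_j(y_i): j\le k,\ i\in I\}$ and $\ve y=(y_1,\dots,y_m)$ has independent coordinates. By Weierstrass approximation the joint distributions agree, so it suffices to produce $\gg p^m$ simultaneous popular $\ve y$ in this linear model (Proposition~\ref{Main2-prop}).

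Second, even in the linear model, Bohr-weighted averages still treat each $I$ separately, so a different averaging is needed. The paper lets $D$ be the set of popular differences of the single pattern $\CL=\{L_1,\dots,L_k\}$; Proposition~\ref{prop:linear-1} gives $|D|\gg_\eps p$. Because all copies are the same pattern, Hölder gives for every $D'\subset D$
\[
\E_{\ve y\in (D')^t}I_{\CL_{[t]}}(1_A;\ve y)=\E_x 1_A(x)\Big(\E_{y\in D'}\prod_{j=1}^k 1_A(x+L_j(y))\Big)^t\ge\alpha^{tk+1}-O(\eps).
\]
The argument then runs as follows.
\begin{enumerate}
\item Colour each $t$-subset good or bad, and apply Ramsey's theorem successively for $t=2,\dots,m$. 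This yields a large $D'$ on which all $t$-tuples with distinct coordinates have the same colour.
\item The displayed average forces that common colour to be good.
\item Double counting over the choices of the initial subset gives $\gg p^m$ simultaneous popular $\ve y$.
\end{enumerate}
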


For example, taking $\CP_1=\{y,2y\}$ and $\CP_2=\{y^3,2y^3\}$ gives simultaneous popular differences for 
\[
(x,x+y,x+2y),\ \ (x,x+y^3,x+2y^3),
\ \ (x,x+y,x+2y,x+y^3,x+2y^3).
\]
We also consider the following two pairs not covered by either Theorem \ref{Main} or \ref{Main2}.

\begin{thm}\label{Main3}
	Let $\CP_1,\CP_2$ be either one of the following pairs of collections:
	\begin{enumerate}
\item $\CP_1 = \{y, 2y\}$ and $\CP_2 = \{y, 2y, y^3\}$;
\item $\CP_1 = \{y^2, 2y^2\}$ and $\CP_2 = \{y, y^2, y+y^2\}$.
	\end{enumerate}	
	Then for any $\eps > 0$ and $A \subset \F_p$ with density $\alpha > 0$, there are at least $cp$ values of $y \in \F_p$ for some constant $c = c(\eps) > 0$, such that
	$$
	I_{\CP_i}(1_A;y) \geq \alpha^{|\CP_i|+1}-\eps
	$$
simultaneously for each $i \in \{1,2\}$.
	\end{thm}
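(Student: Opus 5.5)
We follow the arithmetic regularity strategy behind Theorem \ref{ac1}, now run for the two progressions at once. First apply the arithmetic regularity lemma in $\F_p$: decompose $1_A = f_{\str} + f_{\sml} + f_{\psr}$. Here $f_{\str}$ is a bounded "Bohr-type" function of the form $F(\theta_1 x,\dots,\theta_d x)$ with $F$ Lipschitz on $\T^d$ and $\E f_{\str} = \alpha$, $0\le f_{\str}\le 1$. The term $f_{\sml}$ is small in $L^2$, and $f_{\psr}$ has tiny $U^2$ norm relative to the complexity of $f_{\str}$. Since both $\CP_1$ and $\CP_2$ in each case have complexity at most $1$, Kuca's results give a $U^2$-control (equidistribution) statement. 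On average over $y$ in any set of density $\gg 1$ (in particular over $y$ in a Bohr set), the terms involving $f_{\psr}$ contribute negligibly to $I_{\CP_i}(\cdot;y)$. After a Cauchy--Schwarz, the terms involving $f_{\sml}$ are also negligible. So it suffices to find $\gg p$ values of $y$ in a suitable set for which $I_{\CP_i}(f_{\str};y) \ge \alpha^{|\CP_i|+1} - \eps/2$ for $i=1,2$. We then pass from an average lower bound to $cp$ values by pigeonholing on the averaged error terms.

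Next we choose $y$ so that $f_{\str}$ sees the relevant polynomial shifts as $\approx 0$. For case (1), restrict to $y$ with $\|\theta_j y\|$ small for all $j$, so $f_{\str}(x+y)\approx f_{\str}(x+2y)\approx f_{\str}(x)$. For case (2), restrict to $y$ with $\|\theta_j y^2\|$ small instead. Then $f_{\str}(x+y^2)\approx f_{\str}(x+2y^2)\approx f_{\str}(x)$, but $x+y$ and $x+y+y^2$ are only approximately the same point, not $x$.

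On this set, case (1) becomes simple. $I_{\CP_1}(f_{\str};y)\approx \E_x f_{\str}^3 \ge (\E f_{\str})^3=\alpha^3$ by Hölder. For $\CP_2$ we average over $y$ in the Bohr set. The shift $y^3$ equidistributes: by Weyl sum bounds, the phases $(\theta_j y^3)_j$ are jointly equidistributed on $\T^d$ along the Bohr set $\{\|\theta y\|\text{ small}\}$, as long as $\theta$ is not "degenerate". This gives $\E_y I_{\CP_2}(f_{\str};y)\approx \E_{x,z} f_{\str}(x)^3 f_{\str}(z)=\E f_{\str}^3\cdot\alpha\ge \alpha^4$. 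For case (2), restricting by $\theta y^2$ small leaves $y$ free enough that $\theta y$ equidistributes. This gives $\E_y I_{\CP_2}(f_{\str};y)\approx \E_{x,z} f_{\str}(x)^2 f_{\str}(z)^2 = (\E f_{\str}^2)^2\ge \alpha^4$, and $I_{\CP_1}\approx \E f_{\str}^3\ge\alpha^3$ pointwise.

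To get both progressions simultaneously for many $y$, we need control beyond the average of $I_{\CP_2}$. We therefore also show that the second moment is close to the square of the average, so that $I_{\CP_2}(f_{\str};y)$ concentrates; that is, the equidistribution above is pointwise-in-distribution. Alternatively, $\CP_1$ holds for every $y$ in the Bohr set, so an average bound for $\CP_2$ already yields $\gg p$ good $y$, using boundedness and pigeonhole. This needs the Bohr set to have density $\gg_\eps 1$, which it does, since $d$ depends only on $\eps$.

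The main obstacle is the equidistribution step for the structured part: proving that $\theta\cdot y^3$ (resp. $\theta\cdot y$) is equidistributed on $\T^d$ for $y$ restricted to $\{\|\theta y\|\le\delta\}$ (resp. $\{\|\theta y^2\|\le\delta\}$). The frequencies $\theta_j\in\frac1p\Z$ may satisfy rational relations, so we first take a basis of the span of the frequencies. Fourier expanding the Bohr cutoff then reduces the claim to Weil-type bounds $\big|\E_y e_p(ay^3+by)\big|\ll p^{-1/2}$, and analogously for $ay+by^2$. These bounds are nontrivial exactly when $a\ne0$, which holds for nonzero characters in the reduced basis. A second, more technical difficulty is checking that the $f_{\psr}$ contributions remain negligible after restricting $y$ to the Bohr set. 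This requires the complexity-$1$ $U^2$-control with a Bohr-set weight on $y$, which we expect to follow by expanding the weight in Fourier series and absorbing the resulting linear phases $e_p(\xi P(y))$ into Kuca's counting lemma.
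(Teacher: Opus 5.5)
There is a genuine gap, and it sits exactly at the simultaneity step. You reduce to finding $\gg p$ values of $y$ with $I_{\CP_i}(f_{\str};y)\ge\alpha^{|\CP_i|+1}-\eps/2$. That reduction is not valid, because the terms involving $f_{\psr}$ are controlled only on a Bohr-weighted average over $y$, never pointwise. The pattern $\{y,2y\}$ has complexity $1$, and its $f_{\psr}$-terms genuinely fluctuate in $y$. If $A$ has a quadratic-phase component, they contribute terms like $\cos(4\pi y^2/p)$, which are $\le -c$ for a positive proportion of $y$ in any Bohr set (cf.\ Lemma \ref{lem51}). Squaring them out gives a system of complexity $2$, so no second-moment $U^2$ bound is available. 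Hence ``$\CP_1$ holds for every $y$ in the Bohr set'' is true for $f_{\str}$ but false for $1_A$. For $1_A$ you only get $\CP_1$-popularity on some subset of relative density $\gg\eps$ of the Bohr set (the analogue of Proposition \ref{prop:linear-1}). Averaging $I_{\CP_2}$ gives another such subset, and nothing you prove forces the two to intersect. The concentration variant fails for the same reason: $I_{\CP_2}(1_A;y)$ does not concentrate, since it inherits the fluctuations of $I_{\CP_1}(1_A;y)$. Separately, your claim that $f_{\psr}$-terms are negligible on average over \emph{any} set of density $\gg1$ is false. It holds only for weights, such as Bohr cutoffs in $y$ or $y^2$, whose Fourier phases can be absorbed into the functions.

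What is missing is a \emph{conditional} statement. On the set of $\CP_1$-popular $y$, the average of $I_{\CP_2}(1_A;y)$ should be about $\alpha I_{\CP_1}(1_A;y)$ in case (1), and about $(\E_x 1_A(x)1_A(x+y^2))^2$ in case (2). For example, in case (1) you need $\E_y\phi(I_{\CP_1}(1_A;y))\,I_{\CP_2}(1_A;y)=\alpha\,\E_y\phi(I_{\CP_1}(1_A;y))\,I_{\CP_1}(1_A;y)+o(1)$ for continuous cutoffs $\phi$. Equidistribution of $\theta y^3$ along a Bohr set cannot give this, since the set of $\CP_1$-popular $y$ is unstructured.

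The paper obtains it by matching joint distributions (Proposition \ref{joint}). A mixed moment $\E_y\prod_a I_{\CR_a}(f;y)^{n_a}$, after shifting all the $x$-variables by an extra $h$, becomes a single count for the complexity-$1$ progression $\CP_1\cup\CP_2$ in $(h,y)$, with functions depending on $\ve{x}$. Kuca's counting lemma then replaces it by the corresponding moment of the two-variable linear systems $\{y,2y\},\{y,2y,z\}$ (resp.\ $\{z,2z\},\{y,z,y+z\}$). Weierstrass approximation transfers counts of simultaneously popular $(y,z)$ back to $y$ (Theorem \ref{poly-to-linear}). In the linear model the extra variable is free and the conditional average is exact: $\E_z I_{\CL_2}(1_A;y,z)=\alpha I_{\CL_1}(1_A;y)$ and $\E_y I_{\CL_2}(1_A;y,z)=I_{\{z\}}(1_A;z)^2$. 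So one fixes a $3$-AP-popular $y$ (resp.\ a $z$ popular for both $\{z,2z\}$ and $\{z\}$) and averages over the other variable.
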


\subsection{The nonexistence of simultaneous popular differences}

Conlon, Dong and Hong \cite{CDH} proved that, in the setting of $\F_p^n$ with $p$ sufficiently large but fixed and $n\to\infty$, the two linear patterns 
$$
(x, x+y, x+2y), \ \ (x, x+2y, x+4y)
$$
do not have the simultaneous popular difference property. In particular, for fixed $p$ they constructed subsets $A \subset \F_p^n$ of density $1/2 + o_{n\to\infty}(1)$, such that either $y$ or $2y$ is not a popular difference for $3$-APs in $A$ for any $y \in \F_p^n\setminus\{0\}$. They asked about the pair
\[
(x,x+y,x+2y),\ \ (x,x+y^2,x+2y^2)
\]
and also other pairs of linear patterns. We answer these questions in the setting of $\F_p$ as $p\rightarrow\infty$. We first consider the simpler pair consisting of a $3$-AP and a two-point quadratic polynomial progression.

\begin{thm}\label{counterex}
	Let  $\CP_1 = \{y, 2y\}$ and $\CP_2 = \{y^2\}$.  There are absolute constants $\alpha,c>0$, such that for sufficiently large prime $p$ there exists  a subset $A \subset \F_p$ with density at least $\alpha$ satisfying
$$
\min\Big\{I_{\CP_1}(1_A;y) - \alpha^{3}, I_{\CP_2}(1_A;y) - \alpha^{2}\Big\} \leq -c
$$
for all $y \in \F_p\setminus\{0\}$.
\end{thm}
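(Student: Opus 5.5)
The plan is to build $A$ from a linear and a quadratic phase, so that both counts at a given $y$ depend on the same number $\theta = y^2/p \bmod 1$. This coupling is what lets us rule out every $y$ at once.

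\emph{Why a purely linear construction cannot work.} If $A=\{x: x/p \in S\}$, then the $3$-AP count depends only on $\varphi=y/p$ and the two-point count only on $\theta = y^2/p$. Since $(y/p,y^2/p)$ equidistributes in $\T^2$, some $y$ will always be good for both.

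\emph{The construction.} For $S_1,S_2\subset \T$ finite unions of intervals, set
$$A=\{x\in\F_p:\ x/p\in S_1,\ x^2/p \in S_2\}$$
(phases read mod $1$). Write $\alpha_1=|S_1|$, $\beta=|S_2|$, so $|A|/p\to \alpha=\alpha_1\beta$.

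\emph{Computing the two counts.} Approximate $1_{S_i}$ above and below by trigonometric polynomials and bound the resulting exponential sums by Gauss/Weyl sum estimates, uniformly in $y\neq 0$.
\begin{itemize}
\item For the pattern $(x,x+y^2)$, the phases $x^2$ and $(x+y^2)^2$ differ by the non-degenerate linear term $2xy^2$, so the quadratic parts are independent. This should give
$$I_{\CP_2}(1_A;y)=\beta^2\,\rho_1(\theta)+o(1),\qquad \rho_1(\theta)=\int 1_{S_1}(t)1_{S_1}(t+\theta)\,dt.$$
\item For the $3$-AP, the only quadratic relation is $x^2-2(x+y)^2+(x+2y)^2=2y^2$. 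This should give
$$I_{\CP_1}(1_A;y)=T_1(\varphi)\,G(\theta)+o(1),$$
where
$$T_1(\varphi)=\int 1_{S_1}(t)1_{S_1}(t+\varphi)1_{S_1}(t+2\varphi)\,dt,\qquad G(\theta)=\iint 1_{S_2}(u)1_{S_2}(v)1_{S_2}(2v-u+2\theta)\,du\,dv.$$
\end{itemize}
Both approximations should hold uniformly in $y\ne0$ as $p\to\infty$.

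\emph{Choosing the sets.}
\begin{itemize}
\item Pick $S_2$ with $G(0)<\beta^3-\gamma$ for some fixed $\gamma>0$. Such sets are known to exist: $3$-APs in the torus are not "common", for instance via a Fourier choice making $\sum_r \hat 1_{S_2}(r)^2\hat1_{S_2}(-2r)$ strictly below $\beta^3$. By continuity, $G(\theta)<\beta^3-\gamma/2$ for $\|\theta\|\le\eta_0$.
\item Take $S_1=\T\setminus[0,\eta)$ with $\eta<\eta_0$ small. Then $\rho_1(\theta)=1-2\eta+(\eta-\|\theta\|)_+$, and $\alpha_1^2=1-2\eta+\eta^2$.
\end{itemize}

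\emph{Case analysis.} Fix $y\neq 0$.
\begin{itemize}
\item If $\|\theta\|\ge \eta-\eta^2/2$, then $\rho_1(\theta)\le \alpha_1^2-\eta^2/2$. Hence the two-point count falls short of $\alpha^2$ by at least $c\approx\beta^2\eta^2/2$.
\item Otherwise $\|\theta\|<\eta_0$. Using only $T_1\le \alpha_1$, we get
$$I_{\CP_1}(1_A;y)\le \alpha_1(\beta^3-\gamma/2)+o(1).$$
We need this to be below $\alpha_1^3\beta^3-c$. Since $\alpha_1^3/\alpha_1=(1-\eta)^2\ge 1-2\eta$, it suffices that $2\eta\beta^3<\gamma/2$, which holds for $\eta$ small in terms of $\gamma,\beta$.
\end{itemize}
With $\eta$ fixed this way, all constants are absolute.

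\emph{Main obstacle.} I expect two steps to be hardest. The first is the rigorous equidistribution for the $3$-AP count uniformly in $y$. One must check that after smoothing, every nonzero frequency vector not lying on the relation $(1,-2,1)$ of the quadratic phases gives a Gauss sum in $x$ of size $O(p^{-1/2})$, including small-$y$ degenerate cases, while the linear phases cause no additional relations. The second is exhibiting an explicit $S_2$ with $G(0)<\beta^3$. For this I would try a union of a few intervals and verify the inequality numerically or via Fourier coefficients.
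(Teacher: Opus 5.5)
\textbf{There is a genuine gap.} You claim that both asymptotics hold uniformly in $y\neq0$, and you list the verification of this under ``main obstacle''. That verification cannot succeed, because the linear phase $x/p$ does create extra relations at particular $y$.

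In the two-point count, the terms with quadratic frequencies $n_0=-n_1=n\neq0$ have $x$-linear coefficient $m_0+m_1+2ny^2$. This vanishes mod $p$ whenever $y^2\equiv-(m_0+m_1)/(2n)$. In the $3$-AP count the same happens when $y$ is congruent to a ratio of bounded integers. For any fixed precision this gives $O(1)$ values of $y$, e.g.\ $y\equiv\pm1,\pm\frac12$ or $y^2\equiv\frac12$. These $y$ are not small in any sense your continuity argument sees, and at them the limits are different integrals. The paper meets the same phenomenon ($ny\pm1=0$ in Lemma \ref{lem51}).

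For your construction this actually breaks the conclusion. Take the following choices, all legitimate in your scheme:
\begin{itemize}
\item $S_2=\T\setminus[0,\kappa)$. Here $G(0)=1-3\kappa+\frac52\kappa^2<(1-\kappa)^3=\beta^3$, which also settles your second obstacle.
\item $S_1=\T\setminus[0,\eta)$ with $\eta<\kappa^2/(8\beta^3)$, as your constraint $2\eta\beta^3<\gamma/2$ forces.
\item $y=(p+1)/2\equiv\frac12$.
\end{itemize}
Then $\|\theta\|\to\frac14$, so you are in your first case, where you predict a deficit $\beta^2\eta^2$. However $(x+y^2)^2\equiv x^2+\frac x2+\frac1{16}$, so substituting $x=2z$ gives
\[
I_{\CP_2}(1_A;y)\to\int_\T 1_{S_1}(2w)1_{S_1}(2w+\theta)\rho_2(w+c)\,dw=\beta^2-2\eta(1-2\kappa).
\]
Here $\rho_2(s)=|S_2\cap(S_2-s)|$ and $c$ is near an odd multiple of $\frac1{16}$. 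This exceeds $\alpha^2=(1-\eta)^2\beta^2$ by $2\eta\kappa^2-\eta^2\beta^2>0$.

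Meanwhile $x+2y=x+1$, so only two of the three $S_1$-constraints are distinct, and
\[
I_{\CP_1}(1_A;y)\to\int_\T 1_{S_1}(t)1_{S_1}(t+\tfrac12)\int_\T 1_{S_2}(u)1_{S_2}(u+t+c')1_{S_2}(u+2t)\,du\,dt=1-3\kappa+3\kappa^2-2\eta(1-2\kappa)+O(\eta^2),
\]
with $c'\to\frac14$ or $\frac34$. This exceeds $\alpha^3=(1-\eta)^3\beta^3$ by $\kappa^3+\eta(1-O(\kappa))+O(\eta^2)>0$. So $y=(p+1)/2$ is a simultaneous popular difference of your $A$.

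\textbf{What is missing, and the paper's fix.} You need a way to dispose of these $O(1)$ exceptional differences. Deleting points from your dense $A$ will not do it: killing all configurations with a given difference removes a constant proportion of $A$ and ruins the comparison with $\alpha^{|\CP_i|+1}$.

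The paper proves its formulas (Lemmas \ref{lem51}, \ref{lem52}) only for all but $O(1)$ values of $y$, and then uses Lemma \ref{lem:functions-to-sets}. Pick $B$ at random with $\Pr(x\in B)=\tau f(x)$ for small $\tau$, and delete each $x\in B$ with $x+P_y(y)\in B$ for some exceptional $y$. This makes $I_{\CP_1}(1_A;y)=0$ at the exceptional $y$, at the cost of only a factor $1-O(\tau)$ in density. At every other $y$ the deficit $\tau^{|\CP_i|+1}\delta$ survives against $\alpha_0^{|\CP_i|+1}=(\alpha\tau(1-O(\tau)))^{|\CP_i|+1}$. If you weaken your claims to ``all but $O(1)$ values of $y$'' and apply this lemma to $f=1_A$, your argument goes through.

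Apart from this, your mechanism is the paper's:
\begin{itemize}
\item a linear phase ties the two-point count to $\theta=y^2/p$;
\item the relation $x^2-2(x+y)^2+(x+2y)^2=2y^2$ ties the $3$-AP count to $2\theta$;
\item one then splits according to $\theta$.
\end{itemize}
The paper works with the small Fourier perturbation $f=h+q$ rather than a set. As a result both counts are, up to small errors, explicit trigonometric polynomials in $y^2/p$, and the case split is simply $\cos(4\pi y^2/p)\le-\frac14$ versus $\ge-\frac14$.
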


Note that in this case $\CP_1\cup\CP_2$ has algebraic complexity $2$. This shows that the hypothesis of $\CP$ having complexity $1$ in Theorem \ref{Main} cannot be weakened (without imposing additional assumptions).

Next we consider two $3$-APs with common differences $y$ and $y^2$.

\begin{thm}\label{counterex2}
 Let $\CP_1 = \{y, 2y\}$ and $\CP_2 = \{y^2, 2y^2\}$.
 For sufficiently small $\alpha>0$ and prime $p$ sufficiently large in terms of $\alpha$, one can find a subset $A \subset \F_p$ with $|A| \geq \alpha p$ such that
 $$
 \min\Big\{I_{\CP_1}(1_A;y), I_{\CP_2}(1_A;y)\Big\} \leq \exp(-c(\log \alpha^{-1})^2)
 $$
for some absolute constant $c>0$ and all $y \in \F_p\setminus\{0\}$.
\end{thm}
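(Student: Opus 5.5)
The plan is a Behrend-type construction in which membership in $A$ is decided by the position of a few ``phases'' of $x$ on a torus. The bound $\exp(-c(\log\alpha^{-1})^2)$ is exactly the number of $3$-APs in Behrend's construction of density $\alpha$. This suggests lifting a Behrend set living on a torus $\T^d$ with $d\asymp\log\alpha^{-1}$ to $\F_p$. Concretely:
\begin{enumerate}
\item Fix a thin spherical shell $S\subset\T^{D}$ (around a sphere of radius $\asymp 1$ inside a small cube, with $D\asymp \log\alpha^{-1}$ and shell width $\delta$). Its Haar measure is $\ge\alpha$ if $\delta$ and the cube size are tuned as in Behrend's argument.
\item Put $A=\{x\in\F_p:\Phi(x)\in S\}$. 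Here $\Phi(x)=(xa_1/p,\dots,xa_d/p,\ x^2b_1/p,\dots,x^2b_d/p)\in\T^{2d}$, with frequencies $a_i,b_j$ chosen (e.g. at random) so that the relevant linear and quadratic Weyl sums are $o(1)$.
\item By Weyl equidistribution, $|A|\ge(\alpha-o(1))p$. For each fixed $y$, $I_{\CP_i}(1_A;y)$ equals up to $o(1)$ the Haar measure of the corresponding configuration inside $S$.
\end{enumerate}

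The key geometric input is a robust version of Behrend's observation that a thin shell contains few $3$-APs. If $u,u+v,u+2v+e\in S$ with $|e|$ small compared with $\delta$, then $|v|\lesssim\sqrt{\delta}$. So the measure of such triples is at most $\exp(-c(\log\alpha^{-1})^2)$ unless the step $v$ is tiny.

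Fix $y\ne0$ and consider the pattern $x,x+y,x+2y$. In the linear coordinates the images form an exact AP with step $ya/p$. In the quadratic coordinates they form $u,u+v,u+2v+e$ with $v=(2xy+y^2)b/p$ and $e=2y^2b/p$.

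\emph{Case 1: $\|y^2b/p\|$ is small.} The quadratic block is almost an AP. Its step $2xyb/p$ equidistributes as $x$ varies (since $y\ne0$), so for most $x$ the step is not tiny. Hence $I_{\CP_1}(1_A;y)$ is small.

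\emph{Case 2: $\|y^2b/p\|$ is not small.} Now look at $\CP_2$, with difference $z=y^2$. The linear block is an exact AP with step $y^2a/p$. Coupling $a$ to $b$ (e.g. letting the linear coordinates include the frequencies $b$ themselves) makes this step non-tiny exactly when $\|y^2b/p\|$ is not small. The shell estimate then makes $I_{\CP_2}(1_A;y)$ small.

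The thresholds ``small/tiny'' must be chosen consistently with $\delta$, so that the two cases exhaust all $y\ne0$.

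The main obstacle is to make the error terms work uniformly in $y$:
\begin{enumerate}
\item The Weyl-sum errors must be uniform over all $y$, including $y$ for which some $ya_i$ or $y^2b_j$ is close to $0$. This is where one needs equidistribution of $x\mapsto(xya/p,\ xy b/p)$ for every $y\ne0$, and a careful choice of frequencies.
\item The shell estimate must tolerate the perturbation $e$ in Case 1.
\end{enumerate}

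I would first try to prove a quantitative ``almost-AP in a shell'' lemma: if $|e|\le\eta$ then the measure of $\{(u,v):u,u+v,u+2v+e\in S,\ |v|\ge\rho\}$ is at most $\exp(-c(\log\alpha^{-1})^2)$. I would then tune $\eta,\rho,\delta$ to obtain the final bound. Throughout, $p$ is taken large enough in terms of $\alpha$ that all $o(1)$ equidistribution errors are absorbed.
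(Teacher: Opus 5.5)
\textbf{Main gap: Case 2 fails for a joint shell.} The problem lies in how $A$ is built. You define $A$ through a single shell $S$ in the joint coordinates $\T^{2d}$; normalize it as $S=\{w\in Q: |w-c|^2\in[r^2,r^2+\delta]\}$ with $Q$ a small cube. The shell estimate then applies only when the whole triple $\Phi(x),\Phi(x+y^2),\Phi(x+2y^2)$ is an (approximate) AP in $\T^{2d}$.

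\begin{itemize}
\item Its linear block is an exact AP with step $v_L=y^2b/p$.
\item Its quadratic block $x^2b/p,(x+y^2)^2b/p,(x+2y^2)^2b/p$ has second difference $e=2y^4b/p$. Case 2 gives you no control over $e$.
\end{itemize}

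Write $w_0,w_1,w_2$ for the three points and $c=(c_L,c_Q)$ for the centre. The shell condition only yields
\[
2|v_L|^2+\Big(|w_{0,Q}-c_Q|^2+|w_{2,Q}-c_Q|^2-2|w_{1,Q}-c_Q|^2\Big)\in[-2\delta,2\delta].
\]
The bracket equals $2|v_Q|^2+2\langle w_{2,Q}-c_Q,e\rangle-|e|^2$, which can be negative of size $\asymp r|e|$. So a non-tiny $v_L$ is no contradiction, and you get no bound on $I_{\CP_2}(1_A;y)$ for such $y$.

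The missing idea is to decouple the two constraints. Take $A=\{x: xb/p\in S_1,\ x^2b/p\in S_2\}$ with two separate Behrend shells in $\T^d$.
\begin{itemize}
\item The $\CP_2$-configuration is then tested only against $S_1$. There it is an exact AP, which forces $|y^2b/p|\le\sqrt\delta$; otherwise the count is $0$.
\item The $\CP_1$-configuration is tested only against $S_2$.
\end{itemize}
This is exactly the structure of the paper's $A=\{x: x\in B,\ x^2\in B\}$, where $B$ is the union of the intervals $I_j$ over a $3$-AP-free Behrend set $J\subset[M]$.

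\textbf{Second gap: the two cases do not meet.} Even after this repair, your stated estimates leave some $y$ uncovered. The complement of Case 2 only gives $|e|\le 2\sqrt\delta$, where now $e=2y^2b/p$ (minimal lift). Your robust shell estimate, however, needs $|e|\lesssim \delta/r$ ($|e|$ ``small compared with $\delta$'') to conclude $|v|\lesssim\sqrt\delta$. Those $y$ with $\delta/r\lesssim |y^2b/p|\le\sqrt\delta$ fall through.

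The fix is to use the full identity, which gives $|v_Q|^2\le \delta+2r|e|+|e|^2\lesssim r\sqrt\delta$. The Case 1 count is then at most the measure of the ball of radius $(Cr\sqrt\delta)^{1/2}$ in $\T^d$. That measure is still $\exp(-c(\log\alpha^{-1})^2)$ if $\delta=\alpha^{C'}$ and $d\asymp\log\alpha^{-1}$ are tuned so that $\mu(S_1)\mu(S_2)\ge 2\alpha$. The paper needs no such bookkeeping. Lemma~\ref{approx 3ap} tolerates second differences up to $1/(10M)$, which exceeds the interval length $K/(10p)$ forced by an exact AP; the discreteness of $J$ supplies this robustness for free.

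\textbf{Uniformity in $y$ is not an issue.} Since only upper bounds are needed, your worry about uniformity disappears.
\begin{itemize}
\item In Case 1, drop the linear constraint and use that $x\mapsto 2xy+y^2$ is a bijection. You then only need equidistribution of $t\mapsto tb/p$.
\item Case 2 is deterministic.
\end{itemize}
This matters, because joint equidistribution of $(xb/p,x^2b/p,2xyb/p)$ genuinely fails for some $y\neq 0$. For example, at $y\equiv -1/2$ we have $xb/p+2xyb/p=0$.

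With these changes your argument becomes a continuous-torus analogue of the paper's proof.
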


Note that the upper bound here is smaller than $\alpha^A$ for any given $A > 0$, provided that $\alpha$ is small enough. Thus in this case either $y$ or $y^2$ is the common difference of very few $3$-APs in $A$ for every nonzero $y$.

Next we consider pairs of linear patterns. For linear patterns, we will represent them by their coefficient sets. For a finite coefficient set $C \subset \Z$, the associated linear pattern is $(x+cy)_{c \in C}$. For a function $f: \F_p\rightarrow\C$ we define
$$
I_C(f;y ) = \E_{x \in \F_p} \prod_{c \in C} f(x+cy).
$$

Two sets $C,D\subset \Z$ are said to be \emph{congruent} if $C$ is a translate of $D$ or $-D$; i.e. if $C = t+D$ or $C = t-D$ for some $t \in \Z$. We say that $C\subset \Z$ is a \emph{triple} if $|C|=3$. We say that $C \subset \Z$ is an \emph{additive quadruple} if $C$ is congruent to $\{0,a,b,a+b\}$ for some positive integers $a<b$. 

As mentioned above, it is known that $C$ satisfies the popular difference property if and only if $|C| \leq 3$ or $C$ is an additive quadruple \cite{SSZ21}. Thus if a pair $C_1,C_2$ of linear patterns satisfies the simultaneous popular difference property, then for each $i$ we have either $|C_i| \leq 3$ or $C_i$ is an additive quadruple. If one of $C_1,C_2$ has at most two points and the other has at most three points, then one of $C_1,C_2$ has complexity $0$ while the other pattern has complexity at most $1$, and hence they have the simultaneous popular difference property by Proposition \ref{Main-prop} below. We now consider $C_1,C_2$ when each $C_i$ is either a triple or an additive quadruple. The next statement gives counterexamples when they are not congruent.

\begin{thm}\label{no-spd-linear}
Let $C_1,C_2 \subset \Z$ with each $C_i$ either a triple or an additive quadruple. Suppose that $C_1,C_2$ are not congruent. There are constants $\alpha,c>0$ depending on $C_1,C_2$, such that for sufficiently large prime $p$ there exists  a subset $A \subset \F_p$ with density at least $\alpha$ satisfying
$$
\min\Big\{I_{C_1}(1_A;y) - \alpha^{|C_1|}, I_{C_2}(1_A;y) - \alpha^{|C_2|}\Big\} \leq -c
$$
for all $y \in \F_p\setminus\{0\}$.
\end{thm}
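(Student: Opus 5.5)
We build $A$ from a function of a phase $\theta=x/p\in\T$ and exploit the fact that the two patterns detect different Fourier or quadratic structure. Put $A=\{x\in\F_p : F(x/p)=1\}$ for a fixed measurable $F:\T\to\{0,1\}$, or, more flexibly, $A=\{x: F(x/p, x^2/p)=1\}$ with a quadratic phase. The key point is that for every $y$ the dilate $(y\theta)$ equidistributes along $\theta$, so $I_{C}(1_A;y)$ is approximated by a limiting functional of $F$ that depends only on the "frequency" $\beta=y/p$. Concretely,
$$I_C(1_A;y)\approx \int_\T \prod_{c\in C}F(\theta+c\beta)\,d\theta=:J_C(F;\beta),$$
up to an error $o(1)$ that is uniform in $y$, provided $F$ is Riemann integrable; one gets this by approximating $F$ by trigonometric polynomials. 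When one of the $C_i$ is an additive quadruple, we add a quadratic phase coordinate, using the standard two-dimensional torus averages for $x\mapsto (x/p, x^2/p)$, so that the 4-point pattern also sees quadratic structure. It therefore suffices to find $F$ of mean $\alpha$ with
$$\min\{J_{C_1}(F;\beta)-\alpha^{|C_1|},\;J_{C_2}(F;\beta)-\alpha^{|C_2|}\}\le -c$$
for every $\beta\in\T$. The value $\beta=0$ is harmless, since only $y\neq0$ matters and $\beta$ near $0$ is handled by continuity. Even so, the statement must hold for $\beta$ near $0$ as well.

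The core step is then a finite-dimensional construction. Take $F=\alpha+\eta g$ with $g$ a small trigonometric polynomial and expand $J_C$ to second order in $\eta$. The first-order terms vanish because $g$ has mean zero. The second-order term is $\alpha^{|C|-2}\sum_{c\neq c'}\int g(\theta)g(\theta+(c'-c)\beta)\,d\theta$, which equals $\alpha^{|C|-2}\sum_n |\hat g(n)|^2 \sum_{c\ne c'} e(n(c'-c)\beta)$. Write
$$\Phi_C(t)=\sum_{c\ne c'\in C}\cos(2\pi (c'-c)t)=|\textstyle\sum_{c\in C} e(ct)|^2-|C|.$$
We need $g$ such that for every $\beta$, either $\sum_n|\hat g(n)|^2\Phi_{C_1}(n\beta)<0$ or the analogous sum with $C_2$ is negative. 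Being non-congruent, $C_1$ and $C_2$ have different difference multisets, so $\Phi_{C_1}\ne\Phi_{C_2}$; for triples and additive quadruples the difference multiset determines the set up to congruence. With a single frequency $n=1$ we need $\max(\Phi_{C_1}(t),\Phi_{C_2}(t))<0$ for all $t$. This fails near $t=0$, where both equal $|C|^2-|C|>0$. The second-order approach therefore breaks down at small $\beta$, and I expect this to be the main obstacle. Genuinely higher-order effects, or sets that are not perturbative, are needed there.

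To deal with small $\beta$, I would switch to a non-perturbative $F$, for instance the indicator of a union of intervals or a Bohr-type set chosen so that the two patterns are compared at scale $\beta$. The template is the construction of Sah, Sawhney and Zhao, and the $\F_p^n$ construction of Conlon, Dong and Hong. There one builds $F$ from a function of several frequencies $n_1\theta,\dots,n_r\theta$ and uses that $\{n_j\beta\}$ cannot all be small unless $\beta$ is tiny. Concretely, take $\hat g$ supported on frequencies $N,2N,\dots$ for a large $N$; then $n\beta$ ranges widely even when $\beta$ is small. Choose weights $w_n=|\hat g(n)|^2$ so that $\sum_n w_n\Phi_{C_1}(n\beta)$ and $\sum_n w_n\Phi_{C_2}(n\beta)$ cannot both be nonnegative. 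Because $\int\Phi_C=0$, averaging over dilates makes each sum small on average. One then needs a trigonometric-polynomial lemma: there exist $\lambda\in[0,1]$ and nonnegative weights with
$$\lambda\sum_n w_n\Phi_{C_1}(nt)+(1-\lambda)\sum_n w_n\Phi_{C_2}(nt)<-\delta \quad\text{for all } t\notin \text{(tiny set)}.$$
This follows from a minimax or Hahn–Banach argument once no probability measure $\mu$ on $\T$ makes $\int \Phi_{C_i}(nt)\,d\mu\ge0$ for all $n$ and both $i$ simultaneously. Proving this non-existence is where non-congruence enters. For the remaining tiny $\beta$, including $y$ with $|y|\le p/N$, I would use a separate explicit perturbation that is periodic with small period, or accept a genuine third-order term. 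Handling this small-$\beta$ regime uniformly is the part I am least sure of.
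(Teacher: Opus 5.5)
\textbf{There is a genuine gap.} It goes deeper than the second-order expansion: the linear-phase framework itself cannot produce a counterexample.

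Your reduction represents $I_C(1_A;y)$ by $J_C(F;\beta)$ for a function $F$ on $\T$ that is fixed as $p\to\infty$. That is what makes the approximation uniform in $y$. But $\beta\mapsto J_C(F;\beta)$ is continuous, so $J_C(F;\beta)\to\int_\T F^{|C|}\ge(\int_\T F)^{|C|}$ as $\beta\to0$. This happens simultaneously for $C_1$ and $C_2$, and for an indicator $F$ the limit is even the full density.

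Hence, for $\delta$ small in terms of $c$, every $y$ with $0<|y|\le\delta p$ violates the required inequality for \emph{both} patterns. These are $\gg p$ values of $y$, not $O(1)$, so they cannot be discarded as exceptional. Continuity propagates the bad behaviour at $\beta=0$ rather than making it harmless.

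None of your proposed fixes escapes this:
\begin{itemize}
\item passing to frequencies $N,2N,\dots$ with $N$ fixed;
\item using a non-perturbative $F$;
\item going to higher-order terms.
\end{itemize}
This is exactly the Bohr-set argument behind Proposition~\ref{prop:linear-0}. For a function of boundedly many linear phases, every element of a suitable Bohr set is popular for every linear pattern at once. Correspondingly, the non-existence statement you want for the minimax step is already violated by $\mu=\delta_0$.

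\textbf{The missing idea} is a perturbation that is $U^2$-pseudorandom but quadratically structured, for triples as well as quadruples. The paper takes $f(x)=\frac12+G(x/p,x^2/p)$ with $\wh G(m,0)=0$ for all $m$.
\begin{itemize}
\item Each pairwise correlation $\E_x g(x+b_1y)g(x+b_2y)$ is then $O(p^{-1/2})$ for all but $O(1)$ values of $y$, small $y$ included. So the second-order term you found (necessarily positive near $\beta=0$) simply vanishes, instead of having to be made negative.
\item The leading deviation for a triple $\{0,du,d(u+v)\}$ is cubic in $\wh G$, driven by the identity $vx^2-(u+v)(x+duy)^2+u(x+d(u+v)y)^2=d^2uv(u+v)y^2$. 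It has no sign constraint.
\item Placing the coefficients at frequencies $(K^\lambda n,n)$ with $K$ large (Lemma~\ref{lem:assign-Ghat}) makes this cubic term equal to $\eps^{7/2}h_\sigma(Ld^2y^2/p)+O(\eps^4)$. Here $h_\sigma$ is any prescribed mean-zero trigonometric polynomial attached to the shape $\sigma=\{u,v\}$.
\item Additive quadruples reduce to their triple subpatterns (Lemma~\ref{lem:IC-quadruple}).
\item Non-congruence means either the shapes differ, so the $h_\sigma$ can be chosen independently, or the shapes agree and $d_1\neq d_2$. The latter case is handled by shifted Fej\'er kernels (Lemma~\ref{min}), giving $\min\{H_1(d_1^2Y),H_2(d_2^2Y)\}\le-1/2$ for all $Y$.
\item Finally, random sampling with deletion (Lemma~\ref{lem:functions-to-sets}) turns $f$ into a set and kills the $O(1)$ exceptional $y$.
\end{itemize}
Your remark about accepting a genuine third-order term points in this direction, but none of these ingredients appears in the proposal.
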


For example, the pair $(x, x+y, x+2y)$ and $(x, x+y, x+3y)$ does not have the simultaneous popular difference property. The pair of a $3$-AP and a $4$-AP also does not have the simultaneous popular difference property.

When $C_1,C_2$ are congruent additive quadruples, they must be translates of each other, and hence they satisfy the simultaneous popular difference property for obvious reasons. The same holds for congruent triples $C_1,C_2$ which are translates of each other. For congruent triples that are not translates of each other, for example $(x, x+y, x+3y)$ and $(x, x+2y, x+3y)$, we do not yet have an answer. The only remaining case not covered by Theorem \ref{no-spd-linear} and the discussions above is when one of $C_1,C_2$ has at most two points and the other is an additive quadruple. In this case, we expect that the simultaneous popular difference property follows by using the $U^3$ regularity argument from \cite{GT10}.

\subsection{Proof outline and organization of the paper}

The argument for establishing the existence of simultaneous popular differences in the proof of Theorems \ref{Main}, \ref{Main2}, and \ref{Main3} has two parts: a comparison between polynomial and linear progressions in several variables (Section \ref{sec:poly-to-linear}), followed by a simultaneous popular difference argument for the linear systems (Section \ref{sec:spd}).

For the comparison step in Section \ref{sec:poly-to-linear}, taking the three polynomial progressions
$$
(x, x+y, x+2y), \ \ (x, x+y^3, x+2y^3), \ \ (x, x+y, x+y^3)
$$
as an example, we prove in Theorem \ref{poly-to-linear} that simultaneous popular differences for the associated linear system in two variables $y,z$
$$
(x, x+y, x+2y), \ \ (x, x+z, x+2z), \ \ (x, x+y, x+z)
$$
transfer to the original polynomial progression. This step makes use of the assumption that the union $\{y, 2y, y^3, 2y^3\}$ has algebraic complexity $1$, and proceeds by applying Kuca's counting lemma \cite{K23} to certain mixed moments of the counting functions and comparing their joint distributions.

For the linear problem in Section \ref{sec:spd}, we use a $U^2$ arithmetic regularity decomposition to produce a weight $\mu(y,z)$ which is a smoothed version of the product $B \times B$ for some Bohr neighborhood $B$. We prove in Proposition \ref{prop:linear-0} that almost all of $(y,z) \in B \times B$ are popular for the complexity $0$ system $(x, x+y, x+z)$, and in Proposition \ref{prop:linear-1} that a positive proportion of $(y,z) \in B \times B$ are popular for each of the two individual complexity $1$ systems $(x, x+y, x+2y)$ and $(x, x+z, x+2z)$. Since these two systems involve separate variables (which holds under the direct sum hypothesis in Theorem \ref{Main}), we have a positive proportion of $(y,z) \in B \times B$ that are simultaneous popular differences. For Theorems \ref{Main2} and \ref{Main3}, we use an additional Ramsey argument and averaging inequalities to produce the simultaneous popular differences.

For the counterexamples in Theorems \ref{counterex}, \ref{counterex2}, and \ref{no-spd-linear}, we use two types of construction. Theorems \ref{counterex} and \ref{no-spd-linear} use a Fourier construction to define $f: \F_p\rightarrow [0,1]$ by
\[
f(x)=\frac12+G\Big(\frac{x}{p},\frac{x^2}{p}\Big),
\]
where $G$ is a real trigonometric polynomial, viewed as a small perturbation. The counting operator can  be written in terms of Fourier coefficients of $G$, and the task is then reduced to prescribing the Fourier coefficients appropriately. 

Theorem \ref{counterex2} uses a thickened Behrend construction $B$ (which is Freiman isomorphic to $\{1,2,\cdots,K\} \times \wt{B}$ for some large $K$ and some large $\wt{B}$ without nontrivial $3$-APs \cite{B46}) and 
\[
A=\{x\in\F_p:x\in B,\ x^2\in B\}.
\]
If $A$ contains $3$-APs with common difference $y^2$, then the $3$-AP-free nature of $\wt{B}$ forces $y^2$ into a short interval of length $K$. If $A$ also contains $3$-APs $x, x+y, x+2y$ with common difference $y$, then the identity
\[
x^2+(x+2y)^2-2(x+y)^2=2y^2
\]
makes $x^2, (x+y)^2, (x+2y)^2$ an approximate $3$-AP in $B$, which forces them to lie in the same short interval of length $K$. This restricts to very few possible values of $x$.

The details of these constructions are given in Sections \ref{sec:nspd1}, \ref{sec:nspd2}, and \ref{sec:nspd3} for Theorems \ref{counterex}, \ref{counterex2}, and \ref{no-spd-linear}, respectively.

\subsection*{Acknowledgements}
The majority of this work was carried out when WZ was visiting the Department of Mathematics at the University of Kentucky, whose hospitality is greatly acknowledged. XS was supported by NSF grant DMS-2452462. WZ was supported by USTC Fellowship Grant No. U19582026.

\subsection*{AI assistance}	
ChatGPT (OpenAI) was used by the authors to assist with polishing
and proofreading. The authors take full responsibility for the mathematical statements,
arguments and their presentation.
	
	\section{Preliminaries}

In this section we record the key ingredients used in finding simultaneous popular differences in Theorems \ref{Main}, \ref{Main2}, and \ref{Main3}: an arithmetic regularity lemma for the Gowers $U^2$ norm, a counting lemma for polynomial progressions with algebraic complexity at most $1$, and a twisted generalized von Neumann theorem for linear patterns in several variables with algebraic complexity at most $1$.

Green \cite{G05} introduced an arithmetic analogue of Szemer\'edi's regularity lemma for abelian groups and used it to prove the popular difference property for $3$-APs. This arithmetic regularity lemma was later generalized by Green and Tao \cite{GT10} for higher order Gowers norms. In the $U^2$ version of the regularity lemma that we use, an arbitrary $1$-bounded function is decomposed into a structured part and some errors, where the structured part is approximately invariant under translations by elements from a Bohr set.

For functions $f,g :\F_p\rightarrow \C$, we define the Fourier transform $\wh{f}$  and the convolution $f*g$ by
$$
\wh f(\xi)=\E_{x\in\F_p}f(x)e_p(-\xi x),\qquad
f*g(x)=\E_{u\in\F_p}f(u)g(x-u),
$$
where $e_p(x)=e(x/p)$ and $e(t) = \exp(2\pi it)$. The Gowers $U^2$-norm of $f$ is closely related to the Fourier transform:
\[
\|f\|_{U^2}^4=\sum_{\xi\in\F_p}|\wh f(\xi)|^4,
\qquad
\|\wh f\|_{\infty}\leq\|f\|_{U^2}
\leq\|\wh f\|_{\infty}^{1/2}
\]
for every $1$-bounded function $f$.
For a finite set $\Gamma\subset\F_p$ and $\rho \in (0,1/2)$, define
\[
B(\Gamma,\rho)=\Big\{x\in\F_p:
\Big\|\frac{\xi x}{p}\Big\|_{\T}\leq\rho
\text{ for every }\xi\in\Gamma\Big\},
\]
where $\|t\|_{\T}$ denotes the distance from $t$ to the nearest integer. The standard Bohr set bound gives
$|B(\Gamma,\rho)|\geq\rho^{|\Gamma|}p$; see \cite[Lemma~4.20]{TV}.

The following is the specialization of \cite[Proposition~3.3]{CDH} that we need. A growth function $\eta$ is an increasing function $\eta:\R_+\to\R_+$.

\begin{prop}\label{regularity}
Let $0<\eps<1$, let $\eta$ be a growth function, and let $f:\F_p\to[0,1]$. There exist a finite set $\Gamma\subset\F_p$ with $|\Gamma| \ll_{\eps,\eta}1$, a radius $\rho\gg_{\eps,\eta}1$, and a decomposition
\[
f=f_{\str}+f_{\psr}+f_{\sml}
\]
such that:
\begin{enumerate}[label=\textup{(\alph*)},itemsep=0pt]
\item $f_{\str},f_{\psr},f_{\sml}$ are all $1$-bounded;
\item $f_{\str}\geq0$, $\E f_{\str}=\E f$, and
\[
f_{\str}(x+d)=f_{\str}(x)+O(\eps)
\quad\text{for all }x\in\F_p,\ d\in B(\Gamma,\rho);
\]
\item $\|\wh{f_{\psr}}\|_{\infty}\leq1/\eta(|\Gamma|+\rho^{-1})$;
\item $\|f_{\sml}\|_2\leq\eps$.
\end{enumerate}
\end{prop}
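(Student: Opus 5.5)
The statement is the $U^2$ arithmetic regularity lemma, which the paper takes as a specialization of \cite[Proposition~3.3]{CDH}. I would still plan a self-contained proof by the standard energy-increment iteration on large Fourier coefficients.

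The plan is to define a decreasing sequence of thresholds $\delta_0>\delta_1>\cdots$ depending on $\eta$ and $\eps$. For each $j$, let
\[
\Gamma_j=\{\xi:|\wh f(\xi)|\geq\delta_j\}.
\]
By Parseval, $|\Gamma_j|\leq\delta_j^{-2}$. The energies $E_j=\sum_{\xi\in\Gamma_j}|\wh f(\xi)|^2$ are increasing and bounded by $1$. Hence among the first $O(\eps^{-2})$ indices there is some $j$ with $E_{j+1}-E_j\leq\eps^2$.

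\textbf{Step 1 (choosing the parameters).} Fix that $j$ and set $\Gamma=\Gamma_j$. Choose $\rho$ as a function of $\delta_j$ and $\eps$, for instance $\rho=\eps\delta_j^{2}/100$. Then choose $\delta_{j+1}\leq 1/\eta(|\Gamma|+\rho^{-1})$. This is legitimate because $|\Gamma|+\rho^{-1}$ depends only on $\delta_j$ and $\eps$, so the sequence $\delta_j$ can be defined recursively in advance.

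\textbf{Step 2 (the structured part).} Let $\beta$ be the normalized indicator of a Bohr set $B(\Gamma,\rho')$ with $\rho'$ comparable to $\rho$. Set $f_{\str}=f*\beta*\beta$, or alternatively $f*\beta$ using a regular Bohr set. Then $f_{\str}\geq0$, $\E f_{\str}=\E f$, and $f_{\str}$ takes values in $[0,1]$. For the almost-invariance, write
\[
f_{\str}(x+d)-f_{\str}(x)=\sum_\xi \wh f(\xi)\,\wh\beta(\xi)^2\, e_p(\xi x)\bigl(e_p(\xi d)-1\bigr).
\]
For $\xi\in\Gamma$ and $d\in B(\Gamma,\rho)$ the factor $e_p(\xi d)-1$ is $O(\rho)$, and there are at most $\delta_j^{-2}$ such $\xi$, so these terms contribute $O(\eps)$.

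\textbf{Step 3 (the main obstacle).} The frequencies outside $\Gamma$ are not controlled pointwise by the shift. The cleaner route is to avoid needing them. Define
\[
f_{\str}=f*\beta,\qquad f_{\sml}=\text{(Fourier projection of } f-f*\beta \text{ onto } \Gamma_{j+1}),\qquad f_{\psr}=\text{the remainder}.
\]
The pointwise invariance of $f*\beta$ is then handled directly in physical space. By choosing $\rho'=\eps\rho$ and using the standard Bohr-set regularity $|B(\Gamma,\rho')\triangle(B(\Gamma,\rho')+d)|\ll \eps|B(\Gamma,\rho')|$ for $d\in B(\Gamma,\rho)$, one gets $f*\beta(x+d)=f*\beta(x)+O(\eps)$. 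This requires selecting a regular radius, as in Bourgain's lemma, which costs only constant factors.

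\textbf{Step 4 (the error terms).} We have $\wh{f*\beta}=\wh f\wh\beta$, and $\wh\beta(\xi)=1+O(\eps)$ on $\Gamma$ when $\rho'$ is small relative to $\rho$. So $f-f*\beta$ has Fourier mass $O(\eps)$ on $\Gamma$, and on $\Gamma_{j+1}\setminus\Gamma$ its $\ell^2$ mass is at most $E_{j+1}-E_j\leq\eps^2$. This gives $\|f_{\sml}\|_2\ll\eps$. Outside $\Gamma_{j+1}$, every coefficient is at most $\delta_{j+1}\leq 1/\eta(|\Gamma|+\rho^{-1})$, which gives condition (c).

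The remaining work is to make $f_{\sml}$ and $f_{\psr}$ $1$-bounded, since Fourier projections need not be. One standard fix is to replace the sharp projections by smooth Fej\'er-type kernels. Another is to absorb the unbounded parts into $f_{\sml}$ via truncation, as done in \cite{GT10}, rescaling $\eps$ at the end. I expect this boundedness bookkeeping, together with the Bohr-set regularity in Step 3, to be the only genuine technical points. In the paper one would simply cite \cite[Proposition~3.3]{CDH}.
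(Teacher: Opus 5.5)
The paper contains no proof of this proposition; it imports it from \cite[Proposition~3.3]{CDH}. Your self-contained route has a genuine gap at the step you call bookkeeping, and Step~3 has its parameters reversed.

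\textbf{The gap: conditions (a) and (c) together.} Your sharp Fourier projections are not $1$-bounded, and neither of your proposed repairs works.

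\emph{Truncation.} Write $g=f-f*\beta$. Clipping $g_{\psr}$ to $[-1,1]$ and moving the excess into $f_{\sml}$ changes $f_{\psr}$ by a function whose only control is $|T(g_{\psr})-g_{\psr}|\le |g-g_{\psr}|=|g_{\sml}|$. So its Fourier coefficients are bounded only by $\|g_{\sml}\|_1\lesssim\eps$, and you get $\|\wh{f_{\psr}}\|_\infty\le\delta_{j+1}+O(\eps)$. This is useless, because (c) requires $1/\eta(|\Gamma|+\rho^{-1})$, which is far below $\eps$.

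\emph{Positive kernel.} Suppose you replace the projection by a positive kernel $\beta_2$ attached to $(\Gamma_{j+1},\rho_2)$, setting $f_{\sml}=f*\beta_2-f*\beta$ and $f_{\psr}=f-f*\beta_2$. All three pieces are then $1$-bounded, and (c) holds once $\rho_2\lesssim\delta_{j+1}$. But $\|f_{\sml}\|_2^2=\sum_\xi|\wh f(\xi)|^2|\wh\beta_2(\xi)-\wh\beta(\xi)|^2$ now sees frequencies outside $\Gamma_{j+1}$ where $\wh\beta_2$ is not small. For example, $\wh\beta_2(2\xi_1)=1+O(\rho_2)$ for $\xi_1\in\Gamma_{j+1}$. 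The function $f$ may carry Fourier mass just below $\delta_{j+1}$ at such frequencies, and the single increment $E_{j+1}-E_j\le\eps^2$ says nothing about them.

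\textbf{The missing idea: look one level further.} Pigeonhole over two-step windows to obtain $E_{j+2}-E_j\le\eps^2$. Choose $\delta_{j+2}$ after $\delta_{j+1}$ and $\rho_2$, so that
$\delta_{j+2}^2\,\bigl(p/|B(\Gamma,\rho')|+p/|B(\Gamma_{j+1},\rho_2)|\bigr)\le\eps^2$.
This is a legitimate in-advance choice, since $|B(\Gamma_{j+1},\rho_2)|\ge\rho_2^{\delta_{j+1}^{-2}}p$. Then split $\|f_{\sml}\|_2^2$ into three ranges:
\begin{enumerate}
\item $\xi\in\Gamma_j$: both transforms are $1+O(\eps)$.
\item $\xi\in\Gamma_{j+2}\setminus\Gamma_j$: the contribution is at most $4(E_{j+2}-E_j)$.
\item $\xi\notin\Gamma_{j+2}$: the contribution is at most $2\delta_{j+2}^2\sum_\xi(|\wh\beta(\xi)|^2+|\wh\beta_2(\xi)|^2)$, which is controlled by Parseval.
\end{enumerate}
With $f_{\str}=f*\beta$, $f_{\sml}=f*\beta_2-f*\beta$ and $f_{\psr}=f-f*\beta_2$, this proves the statement. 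It is the Bohr-set analogue of the nested-factor scheme in \cite{GT10}. There, boundedness of all three pieces comes from using positivity-preserving averaging operators at two levels, not Fourier projections.

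\textbf{The reversed parameters in Step~3.} With $\rho'=\eps\rho$, a shift $d\in B(\Gamma,\rho)$ lies in a Bohr set of larger radius than $B(\Gamma,\rho')$. It can move $B(\Gamma,\rho')$ entirely off itself: take $\Gamma=\{\xi\}$ and $\|\xi d/p\|_{\T}\approx\rho>2\rho'$. So $f*\beta$ need not be almost invariant. Regularity only controls shifts in a small dilate. You should take $\rho'\asymp\eps$ regular, and then $\rho\asymp\eps\rho'/|\Gamma|$. It is this choice, rather than $\rho'$ being small relative to $\rho$, that gives $\wh\beta(\xi)=1+O(\rho')=1+O(\eps)$ on $\Gamma$.

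\textbf{A minor point in Step~1.} $|\Gamma|$ depends on $f$, so you should feed $\eta$ the a priori bound $\delta_j^{-2}+\rho^{-1}$ and use monotonicity of $\eta$.
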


The growth function allows the pseudorandom error to be made small relative to the Bohr set obtained from the decomposition. Observe that if $L(\ve{y}) \in \Z_0[\ve{y}]$ is a linear form and all coordinates of $\ve{y}$ lie in $B$, then
\begin{equation}\label{eq:translation-invariance}
f_{\str}(x+L(\ve{y})) = f_{\str}(x) + O_L(\eps)
\end{equation}
for all $x \in \F_p$.

Next we turn to counting theorems for polynomial progressions in finite fields. In the complexity $0$ case when the polynomials $\CP = \{P_1,\cdots,P_k\}$ are linearly independent, Peluse \cite{P18,P19} proved asymptotic counting formulas for $1$-bounded functions $f_0,\cdots,f_k:\F_p\rightarrow\C$ with a power-saving error:
$$
\E_{x,y \in \F_p} f_0(x) \prod_{i=1}^k f_i(x+P_i(y)) = \prod_{i=0}^k (\E f_i) + O(p^{-c}),
$$
where $c>0$ is a constant depending on $\CP$.
This is in general not the case when the polynomials satisfy linear relations.
For progressions of algebraic complexity $1$, Kuca \cite[Corollary 1.15(i)]{K23} proved a counting theorem which identifies the main term with the count of an associated linear progression in several variables.

\begin{thm}\label{counting lem}
Let $\P=\{P_1,\dots,P_k\}$ be a collection of nonzero polynomials in $\Z_0[y]$ with algebraic complexity at most $1$. Let $Q_1,\dots,Q_m\in\Z_0[y]$ be linearly independent polynomials such that we can write
\[
P_j=\sum_{r=1}^m a_{jr}Q_r,\qquad a_{jr}\in\Q
\]
for each $1 \leq j \leq k$.
Define linear forms $L_1,\cdots,L_k$ in $m$ variables by
$$
L_j(\ve{y})=\sum_{r=1}^m a_{jr}y_r
$$
for each $1\leq j\leq k$ and $\ve{y} = (y_1,\cdots,y_m)$. Then for all $1$-bounded functions $f_0,\dots,f_k:\F_p\to\C$ we have
\[
\E_{x,y\in\F_p}f_0(x)\prod_{i=1}^k f_i(x+P_i(y))
=
\E_{x\in \F_p, \ve{y} \in \F_p^m}f_0(x)
\prod_{i=1}^k f_i(x+L_i(\ve{y}))+o(1),
\]
where $o(1)$ denotes a quantity that tends to $0$ as $p\rightarrow\infty$, which depends on $\CP$ but not on $f_0,\cdots,f_k$.
\end{thm}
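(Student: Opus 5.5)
This is the counting theorem of Kuca, \cite[Corollary 1.15(i)]{K23}, so we only sketch how we would prove it. The plan is to compare both sides through their Fourier expansions. This works exactly, up to Weyl-sum errors, on functions with bounded Fourier support. We then reduce to that case by a $U^2$-control statement that uses the complexity hypothesis.

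\emph{Step 1 (finite spectrum).} Suppose each $f_i$ has Fourier support of size at most $M$. Expand $f_i(u)=\sum_{\xi}\wh{f_i}(\xi)e_p(\xi u)$ and average in $x$. This forces $\xi_0+\cdots+\xi_k=0$, and the left-hand side becomes
\[
\sum_{\xi_0+\cdots+\xi_k=0}\ \prod_{i}\wh{f_i}(\xi_i)\,\E_{y\in\F_p}e_p\Big(\sum_{r=1}^m c_r(\xi)Q_r(y)\Big),
\qquad c_r(\xi)=\sum_{j=1}^k a_{jr}\xi_j .
\]
The same expansion of the right-hand side gives the identical sum with $\E_{y}$ replaced by $\E_{\ve{y}\in\F_p^m}e_p(\sum_r c_r y_r)$. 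That inner average equals $1$ if all $c_r(\xi)=0$ and $0$ otherwise.

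Now take a tuple with some $c_r(\xi)\ne0$ in $\F_p$. Since the $Q_r$ are linearly independent over $\Q$ with bounded integer coefficients, the polynomial $\sum_r c_rQ_r$ is nonconstant mod $p$ of bounded degree once $p$ is large (clear denominators of the $a_{jr}$ first). Weil's bound then makes its exponential sum $O(p^{-1/2})$. When all $c_r=0$ the two inner averages agree exactly. So the two sides differ by $O(M^{k+1}p^{-1/2})$.

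\emph{Step 2 ($U^2$ control).} Show that both sides are small whenever some $f_i$ has small $U^2$ norm. More precisely, each side should be at most $\omega(\min_i\|f_i\|_{U^2})+o(1)$ with $\omega(t)\to0$ as $t\to0$.

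For the linear side this is a generalized von Neumann argument. Complexity at most $1$ of $\CP$ is equivalent to the linear forms $0,L_1,\dots,L_k$ having true complexity at most $1$, because polynomial relations among the $x+P_i(y)$ correspond to those among the $x+L_i(\ve{y})$ via the independence of the $Q_r$. For a linear system of true complexity $1$, $U^2$ control follows from Cauchy--Schwarz (Green--Tao).

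For the polynomial side we run PET induction to get control by some high $U^s$ norm with an $o(1)$ error from Weyl bounds. We then apply Peluse's degree-lowering argument to bring $s$ down to $2$. The absence of quadratic algebraic relations is exactly what allows degree lowering to stop at $U^2$ rather than $U^3$.

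\emph{Step 3 (combination).} Fix $\delta>0$ and write $f_i=g_i+h_i$. Here $g_i$ is the projection onto the large spectrum $\{\xi:|\wh{f_i}(\xi)|\ge\delta\}$, which has size at most $\delta^{-2}$ by Parseval, and $\|h_i\|_{U^2}\le\|\wh{h_i}\|_\infty^{1/2}\le\delta^{1/2}$. The $g_i$ are $O(\delta^{-2})$-bounded rather than $1$-bounded. Multilinear expansion then gives $2^{k+1}$ terms. Every term containing some $h_i$ is $O_\delta(\omega(\delta^{1/2}))+o(1)$ on both sides by Step 2, applied after rescaling or after replacing $g_i$ by a $1$-bounded smoothed version such as convolution with a Fejér-type kernel. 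The all-$g$ term matches up to $O_\delta(p^{-1/2})$ by Step 1. Letting $\delta\to0$ slowly as $p\to\infty$ gives the $o(1)$ error, uniform in the $f_i$.

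The bookkeeping around the unboundedness of $g_i$ is handled most cleanly by the usual energy-increment choice of $\delta$. The main obstacle is the polynomial half of Step 2, the degree-lowering to $U^2$ control; this is the genuine content of Kuca's and Peluse's work, and we would simply import it.
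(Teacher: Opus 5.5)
Your proposal is correct in outline, but it takes a different route from the paper. The paper gives no argument of its own. It imports Kuca's counting corollary \cite[Corollary 1.15(i)]{K23} directly, passes from integer to rational $a_{jr}$ by clearing denominators and rescaling $x$, and remarks that the linear independence of the $Q_r$ is needed.

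You instead rederive that corollary from two more basic black boxes:
\begin{enumerate}
\item $U^2$ control of the polynomial average, which is Kuca's true-complexity theorem for progressions of complexity at most $1$;
\item $U^2$ control of the linear average.
\end{enumerate}
You glue these together with a structured/uniform Fourier splitting and Weil's bound. The second input is extra work compared with the paper's route, but the paper needs it anyway in Proposition~\ref{prop:twisted-gvn}. In return you get transparency. Your Step 1 shows that linear independence of the $Q_r$ is used only to make $\sum_r c_rQ_r$ nonconstant mod $p$ whenever some $c_r\neq 0$. The complexity hypothesis on $\CP$ enters only through the polynomial $U^2$ control. Rational coefficients are handled directly in $\F_p$.

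Three points need correcting.

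\emph{One-directional correspondence.} In Step 2 the correspondence between relations goes only one way. Substituting $y_r=Q_r(y)$ turns a relation among the $x+L_i(\ve{y})$ into one among the $x+P_i(y)$, so complexity at most $1$ passes from $\CP$ to $\CL$, but not back. For example, with $\CP=\{y,2y,y^2\}$ the system $\CL=\{y_1,2y_1,y_2\}$ has complexity $1$, while $\CP$ has complexity $2$ and the theorem fails for it. Your argument only uses the valid direction, so this is an overstatement rather than a gap.

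\emph{Linear-side citation.} $U^2$ control for a linear system with independent squares is not in general a Cauchy--Schwarz argument. For instance, take the forms $x+ay+bz$ for six points $(a,b)$, including $(0,0)$, with no three collinear and not all on a conic. Their squares are independent, yet the Cauchy--Schwarz complexity is $2$. What you need is the Gowers--Wolf theorem \cite{GT10}, \cite{A22}, \cite{M21}.

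\emph{Step 3 boundedness.} Rescaling does not work. The spectral projections $g_i$ are only $O(\delta^{-1})$-bounded, and a merely qualitative $\omega$ cannot absorb the resulting factor $\delta^{-O(k)}$. So your $O_\delta(\omega(\delta^{1/2}))$ need not tend to $0$, and an energy-increment choice of $\delta$ does not address this $L^\infty$ problem. Your smoothing option is the one that works:
\begin{enumerate}
\item Take $g_i=f_i*\nu$, where $\nu$ is the normalized product of Fej\'er kernels $F_N(\gamma x/p)$ over $\gamma$ in the large spectrum.
\item Then $g_i$ is $1$-bounded with Fourier support of size at most $(2N)^{O(\delta^{-2})}$, which is all Step 1 needs.
\item Comparing the box $[0,N)^{\Gamma}$ with its faces gives $|1-\wh{\nu}(\gamma)|\le 2N^{-1/2}$ on the spectrum.
\item Hence $h_i=f_i-g_i$ is $2$-bounded with $\|\wh{h_i}\|_\infty\ll \delta+N^{-1/2}$.
\end{enumerate}
Step 2 then applies with absolute constants, and letting $\delta\to 0$ and $N\to\infty$ slowly as $p\to\infty$ gives the uniform $o(1)$.
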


The minor extension from $a_{jr} \in\Z$ in Kuca's statement to $a_{jr} \in \Q$ is easily achieved by clearing denominators and rescaling the $x$-variable. The linear independence of $Q_1,\cdots,Q_m$ ensures that the associated linear forms $L_1,\cdots,L_k$ retain exactly the linear relations satisfied by $P_1,\dots,P_k$. This linear independence assumption seems to be missed in Kuca's statement.

In our treatment of linear patterns in several variables, we need a twisted version of the generalized von Neumann theorem for the associated counting operator.

\begin{prop}\label{prop:twisted-gvn}
Let $\CL = (L_1,\cdots,L_k)$ be a finite collection of nonzero linear forms in $m$ variables in $\Z_0[\ve{y}]$ with algebraic complexity at most $1$. Let $f_0,f_1,\cdots,f_k:\F_p\rightarrow\C$ be $1$-bounded. Then
$$
\Big|\E_{x,\ve{y}} e_p(\xi\cdot \ve{y})f_0(x) \prod_{i=1}^kf_i(x+L_i(\ve{y}))\Big| \ll_{\CL} \min_{0 \leq i \leq k}\|f_i\|_{U^2}^c
$$
for any $\ve{\xi} \in \F_p^m$ and some constant $c = c(\CL) > 0$.
\end{prop}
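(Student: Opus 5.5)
We need to prove the twisted generalized von Neumann bound
\[
\Big|\E_{x,\ve{y}} e_p(\xi\cdot \ve{y})f_0(x) \prod_{i=1}^kf_i(x+L_i(\ve{y}))\Big| \ll_{\CL} \min_{0 \leq i \leq k}\|f_i\|_{U^2}^c .
\]
The plan is to remove the twist $e_p(\xi\cdot\ve{y})$ by a change of variables and a Cauchy--Schwarz step, and then apply the standard generalized von Neumann theorem for complexity-one linear systems, in the Green--Tao sense of Cauchy--Schwarz complexity.

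First, note that complexity at most $1$ in the algebraic sense (no quadratic relations among $x, x+L_1(\ve{y}),\dots,x+L_k(\ve{y})$) is equivalent, for linear forms, to Cauchy--Schwarz complexity at most $1$. More precisely, the squares of the $k+1$ forms $x, x+L_i(\ve{y})$ are linearly independent as quadratic forms. This is the Gowers--Wolf ``true complexity'' criterion. In the large-$p$ setting, after fixing an index $i$, it yields a partition of the remaining forms into two classes such that $L_i$ is not in the span of either class. (For pairwise non-proportional forms, Cauchy--Schwarz complexity $1$ is exactly this property, and one can check it follows from the absence of quadratic relations.)

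Second, fix the index $i$ for which we want $\|f_i\|_{U^2}$. The twist is a linear phase in $\ve{y}$. Reparametrize so that, after the affine substitution $\ve{y}\mapsto \ve{y}$ and $x\mapsto x$, the character $e_p(\xi\cdot\ve{y})$ can be absorbed. Consider the two classes $S_1,S_2$ from the Cauchy--Schwarz partition, with $L_i$ outside $\Span(S_1)$ and outside $\Span(S_2)$ (with $L_0=0$ included). Choose new coordinates $(x,\ve{y})\mapsto(u,v,\ve{w})$ so that the forms in $S_1$ do not depend on $u$ and the forms in $S_2$ do not depend on $v$.

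Apply Cauchy--Schwarz twice: first in all variables except $u$, eliminating $S_2$-functions, then in all variables except $v$, eliminating $S_1$-functions. The twist $e_p(\xi\cdot\ve{y})$ is a linear phase, so it can be split as a product of a phase depending only on $u$ and the other variables and a phase depending only on $v$ and the other variables. Each phase is therefore ``owned'' by one class and is eliminated together with that class. The only new concern is that phases have no $U^2$-size restriction, but they are $1$-bounded, so they cause no harm.

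After the two applications we obtain an average of the form
\[
\E\, \Delta_{h_1,h_2} f_i(\ell(\cdot))
\]
times leftover $1$-bounded factors. Because $L_i$ is not in the span of either class, this average is bounded by $\|f_i\|_{U^2}^{4}$, up to a harmless change of variables with coefficients depending on $\CL$. This is valid for $p$ large relative to those coefficients; small $p$ is trivial.

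The main obstacle is the linear-algebra step: deducing the required Cauchy--Schwarz partition from algebraic complexity at most $1$, and arranging the twist to split compatibly. If the direct partition fails, I would instead first apply Cauchy--Schwarz in $x$ together with a van der Corput differencing in $\ve{y}$. This removes the linear phase entirely, since $e_p(\xi\cdot(\ve{y}+\ve{h}))\overline{e_p(\xi\cdot\ve{y})}$ is independent of $\ve{y}$. We would then apply the untwisted generalized von Neumann theorem (Gowers--Wolf / Green--Tao) for the resulting doubled system, which still has complexity $1$ in the relevant variable, losing only a power in the exponent $c$.
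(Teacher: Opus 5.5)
\textbf{There is a genuine gap in your first step.} You assert that algebraic complexity at most $1$ is equivalent to Cauchy--Schwarz complexity at most $1$. Linear independence of the squares is the Gowers--Wolf criterion for \emph{true} complexity $1$, not for Cauchy--Schwarz complexity $1$, and the two notions differ.

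For a counterexample, take $\CL=\{y_1,\,y_2,\,y_1+y_2,\,2y_1+3y_2,\,3y_1+2y_2\}$. The six forms $x,\,x+L_i(\ve y)$ correspond to the points $(0,0),(1,0),(0,1),(1,1),(2,3),(3,2)$ of the affine plane.
\begin{itemize}
\item No conic passes through all six points (a direct check), so their squares are linearly independent and $\CL$ has algebraic complexity $1$.
\item No three of the points are collinear, so any three of the forms span the whole space of linear forms in $(x,y_1,y_2)$.
\end{itemize}
However you split the five forms other than a given target into two classes, one class has three forms, and its span contains the target. So the partition you need does not exist for any index $i$, and two Cauchy--Schwarz steps cannot give $U^2$ control here.

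Closing exactly this discrepancy is the content of the deep theorem that true complexity equals algebraic complexity: Green--Tao under the flag condition, Altman in general, and Manners quantitatively. The paper invokes Manners. Your main route therefore only proves the proposition for systems of Cauchy--Schwarz complexity $1$. In that restricted case, your splitting of the linear phase between the two classes is fine.

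\textbf{Your fallback does not repair this.} After Cauchy--Schwarz in $x$ and the substitution $\ve y'=\ve y+\ve h$, the phase becomes $e_p(-\xi\cdot\ve h)$. This is independent of $\ve y$ but not of $\ve h$, which leaves two options, neither of which works.
\begin{itemize}
\item If $\ve h$ remains a variable of the doubled system, you still face a twisted average, so you are back where you started (and $f_0$ has been discarded).
\item If you fix $\ve h$ and use the triangle inequality, the untwisted theorem bounds the inner average by $\|\Delta_{L_i(\ve h)}f_i\|_{U^2}^c$. Averaging over $\ve h$ then gives only $\|f_i\|_{U^3}$-type control, not the required $U^2$ bound.
\end{itemize}

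\textbf{The missing idea is elementary:} remove the twist by linear algebra before any analysis, as the paper does.
\begin{itemize}
\item If $\xi\cdot\ve y\notin\Span_{\F_p}(L_1,\dots,L_k)$, choose $\ve v$ with $L_i(\ve v)=0$ for all $i$ but $\xi\cdot\ve v\neq 0$. Translating $\ve y$ by $\ve v$ multiplies the average by $e_p(\xi\cdot\ve v)\neq 1$, so the average vanishes.
\item Otherwise $\xi\cdot\ve y=\sum_i b_iL_i(\ve y)$. The identity $e_p(\xi\cdot\ve y)=e_p(-\sum_i b_ix)\prod_i e_p(b_i(x+L_i(\ve y)))$ lets you absorb the phase into $f_0,\dots,f_k$ without changing any $U^2$ norm.
\end{itemize}
This reduces the statement to the untwisted bound for complexity-one systems, which is where Manners' theorem is genuinely needed.
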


The twist $e_p(\xi\cdot \ve{y})$ can be easily removed by using the structure of the linear forms. The untwisted version belongs to the theory of true complexities of linear forms initiated by Gowers and Wolf \cite{GW10,GW11}. Green and Tao \cite{GT10} proved a qualitative result under the flag condition, and Altman \cite{A22} removed this restriction. We use Manners' quantitative result \cite[Theorem 1.1.5]{M21}. In the single variable case $m=1$, this can be proved directly via Fourier analysis.  

\begin{proof}
If the linear form $\xi\cdot \ve{y}$ does not lie in the span of $L_1,\cdots,L_k$ over $\F_p$, then there exists $\ve{v} \in \F_p^m$ such that $L_1(\ve{v}) = \cdots = L_k(\ve{v}) = 0$ but $\xi\cdot \ve{v} \neq 0$. Translating the variable $\ve{y}$ in the average by $\ve{v}$, we obtain
$$
\E_{x,\ve{y}} e_p(\xi\cdot \ve{y})f_0(x) \prod_{i=1}^kf_i(x+L_i(\ve{y})) = e_p(\ve{\xi}\cdot\ve{v}) \E_{x, \ve{y}} e_p(\xi\cdot \ve{y})f_0(x) \prod_{i=1}^kf_i(x+L_i(\ve{y})),
$$
which implies that both sides above are $0$, and the conclusion holds trivially.

Now assume that $\xi\cdot \ve{y} = \sum_i b_iL_i(\ve{y})$ for some $b_1,\cdots,b_k \in \F_p$. We can write the left-hand side as
$$
\E_{x,\ve{y}} e_p(\xi\cdot \ve{y})f_0(x) \prod_{i=1}^kf_i(x+L_i(\ve{y})) = \E_{x,\ve{y}} g_0(x) \prod_{i=1}^k g_i(x + L_i(\ve{y})),
$$
where
$$
g_0(x) = f_0(x) e_p(-(b_1+\cdots+b_k)x), \ \ g_i(x) = f_i(x) e_p(b_ix)
$$
for $1 \leq i \leq k$. Since $\|g_i\|_{U^2} = \|f_i\|_{U^2}$ for each $0 \leq i \leq k$, it suffices to prove Proposition \ref{prop:twisted-gvn} with $\xi=0$, in which case the conclusion follows from \cite[Theorem 1.1.5]{M21}.
\end{proof}

\section{From polynomial progressions to linear progressions}\label{sec:poly-to-linear}

The purpose of this section is to reduce the task from dealing with polynomial progressions to dealing with linear progressions in several variables. This reduction is summarized in the following theorem and requires the assumption that the ambient set of polynomials $\CP$ has algebraic complexity at most $1$.

	\begin{thm}\label{poly-to-linear}
Let $\CP = \{P_1,\cdots,P_k\}$ be a collection of nonzero polynomials in $\Z_0[y]$ with algebraic complexity at most $1$. Let $Q_1,\cdots,Q_m \in \Z_0[y]$ be linearly independent polynomials such that we can write
$$
P_j = \sum_{r=1}^m a_{jr} Q_r, \qquad a_{jr} \in \Q
$$
for each $1 \leq j \leq k$. Define linear forms $L_1,\cdots,L_k$ in $m$ variables by
$$
L_j(\ve{y}) = \sum_{r=1}^m a_{jr}y_r
$$
for each $1 \leq j \leq k$ and $\ve{y}=(y_1,\cdots,y_m)$. Let $\CR_1,\cdots,\CR_q \subset \CP$ and  define $\CL_a = \{L_j: P_j \in \CR_a\}$ for $1 \leq a \leq q$.
For any $\eps > 0$ and $A \subset \F_p$ with density $\alpha > 0$, if there are at least $\beta p^m$ values of $\ve{y} \in \F_p^m$ for some $\beta > 0$ such that
$$
I_{\CL_a}(1_A;\ve{y}) \geq \alpha^{|\CL_a|+1} - \eps
$$
simultaneously for all $1 \leq a \leq q$, then there are at least $(\beta - o(1))p$ values of $y \in \F_p$ such that
$$
I_{\CR_a}(1_A;y) \geq \alpha^{|\CR_a|+1} - 2\eps
$$
simultaneously for all $1 \leq a \leq q$.
	\end{thm}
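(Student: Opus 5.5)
The plan is to compare the joint distribution of the vector $\big(I_{\CR_a}(1_A;y)\big)_{a=1}^q$, with $y$ uniform in $\F_p$, against that of $\big(I_{\CL_a}(1_A;\ve{y})\big)_{a=1}^q$, with $\ve{y}$ uniform in $\F_p^m$, by the method of moments. Both vectors take values in $[0,1]^q$. So it suffices to show that every mixed moment
$$
\E_{y\in\F_p}\prod_{a=1}^q I_{\CR_a}(1_A;y)^{n_a}\quad\text{and}\quad \E_{\ve{y}\in\F_p^m}\prod_{a=1}^q I_{\CL_a}(1_A;\ve{y})^{n_a}
$$
agree up to an error $o(1)$ that depends only on $\CP$ and $(n_a)$, not on $A$. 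A polynomial approximation argument then transfers the lower bound on the number of good parameters.

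\textbf{Step 1 (moments).} Fix exponents $n_1,\dots,n_q$ and write $N=\sum_a n_a$. Index the copies by $s=1,\dots,N$, with copy $s$ attached to the collection $\CR_{a(s)}$. Expanding the product, the $y$-side moment equals
$$
\E_{x_1,\dots,x_N,y}\prod_{s=1}^N 1_A(x_s)\prod_{j:\,P_j\in\CR_{a(s)}}1_A(x_s+P_j(y)).
$$
Substitute $x_s=x+h_s$ and average over $\ve{h}=(h_1,\dots,h_N)$. For fixed $\ve{h}$ the inner average becomes $\E_{x,y} g_0(x)\prod_{j=1}^k g_j(x+P_j(y))$, where
$$
g_0(u)=\prod_{s}1_A(u+h_s),\qquad g_j(u)=\prod_{s:\,P_j\in\CR_{a(s)}}1_A(u+h_s)
$$
(an empty product is $1$). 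All $g_j$ are $1$-bounded. Theorem \ref{counting lem} applied to $\CP$, which has complexity at most $1$, then replaces this average by $\E_{x,\ve{y}} g_0(x)\prod_j g_j(x+L_j(\ve{y}))$, with an $o(1)$ error uniform in the functions and hence in $\ve{h}$. Averaging over $\ve{h}$ and undoing the substitution recovers exactly the corresponding mixed moment on the $\ve{y}$ side. The key point is that the error term in Theorem \ref{counting lem} does not depend on $f_0,\dots,f_k$, which is what lets us absorb the auxiliary shifts $\ve{h}$.

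\textbf{Step 2 (approximation).} Let $T_a=\alpha^{|\CR_a|+1}$; note $|\CL_a|=|\CR_a|$. Choose a continuous $\varphi:[0,1]^q\to[0,1]$ with $\varphi=1$ on $\{t: t_a\ge T_a-\eps\ \forall a\}$ and $\varphi=0$ outside $\{t: t_a\ge T_a-2\eps\ \forall a\}$, for instance a product of piecewise linear cutoffs. Given $\delta>0$, Weierstrass gives a polynomial $\Phi$ with $\|\Phi-\varphi\|_{\infty}\le\delta$ on $[0,1]^q$. The degree and coefficients of $\Phi$ depend only on $\eps,\delta,q$ and the values $T_a$, and the latter take finitely many values once $\alpha$ is fixed. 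By Step 1, $\E_y\Phi(I_{\CR}(y))=\E_{\ve{y}}\Phi(I_{\CL}(\ve{y}))+o(1)$. Hence
$$
\frac{1}{p}\#\{y\text{ good}\}\ \ge\ \E_y\varphi\ \ge\ \E_{\ve{y}}\varphi-2\delta-o(1)\ \ge\ \beta-2\delta-o(1).
$$
Letting $\delta\to0$ sufficiently slowly as $p\to\infty$ yields $(\beta-o(1))p$ good values of $y$.

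\textbf{Main obstacle.} The main issue is uniformity in Step 2. The $o(1)$ in Step 1 must not depend on $\alpha$ or $A$, and the approximating polynomial must not depend on $A$. Since $\alpha$ enters only through the thresholds $T_a$, I would handle this by fixing $\alpha,\eps$ and noting that $\varphi$, and hence $\Phi$, depends only on these data, while Step 1 is uniform over all $A$. A minor point is that $\CR_a$ may share polynomials. This causes no difficulty, because the construction of the $g_j$ simply multiplies all shifts attached to the same $P_j$, so Theorem \ref{counting lem} is only ever applied to the single collection $\CP$.
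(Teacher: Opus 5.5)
Your proposal is correct and follows the paper's proof essentially step for step. You match the mixed moments by recentring the $N$ copies at a common base point; the paper averages over one common shift $h$ rather than your offsets $\ve{h}$, which is the same reparametrization. You then apply Theorem \ref{counting lem} uniformly in the auxiliary functions, and finish by Weierstrass approximation of a continuous cutoff squeezed between the $\eps$- and $2\eps$-threshold indicators, which is what the paper compresses into the second claim of Proposition \ref{joint}. On the uniformity point you flagged: writing the cutoff as $\prod_a\varphi(t_a-T_a)$ for one fixed one-variable $\varphi$, as the paper does, lets the approximating polynomial (and hence the $o(1)$) be chosen independently of $\alpha$, so you do not need to fix $\alpha$.
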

	
For a finite collection $\CL \subset \Z_0[\ve{y}]$ of linear forms in the $m$ variables $\ve{y} = (y_1,\cdots,y_m)$ with zero constant terms, the counting operator is defined by
$$
I_{\CL}(f;\ve{y}) = \E_{x \in \F_p} f(x) \prod_{L \in \CL}f(x + L(\ve{y}))
$$
for $f: \F_p\rightarrow \C$. If $I_{\CL}(1_A;\ve{y}) \geq \alpha^{|\CL|+1}-\eps$, we informally refer to $\ve{y}$ as a popular difference for $\CL$.

Note that if $\CP = \{P_1,\cdots,P_k\}$ has algebraic complexity at most $1$, then the corresponding linear pattern $\CL = \{L_1,\cdots,L_k\}$ as defined in the statement of Theorem \ref{poly-to-linear} also has algebraic complexity at most $1$. Thus, in view of Theorem \ref{poly-to-linear}, our main results (Theorems \ref{Main},  \ref{Main2}, and \ref{Main3}) can be respectively reduced to Propositions \ref{Main-prop}, \ref{Main2-prop}, and \ref{Main3-prop} below.
	
	\begin{prop}\label{Main-prop}
		Let $\L$ be a finite collection of nonzero linear forms in $m$ variables in $\Z_0[\ve{y}]$. Let $\L_{1},\dots,\L_{\ell}\subset\L$ be subsets, each having algebraic complexity $1$, satisfying
$$
\Span(\L_1\cup\cdots\cup \L_\ell) = \Span(\L_1)\oplus \cdots\oplus \Span(\L_\ell).
$$		
		Then for any $\eps>0$ and $A\subset\F_{p}$ with density $\alpha>0$, there are at least $cp^m$ values of $\ve{y} \in \F_p^m$ for some constant $c = c(\CL, \eps) > 0$, such that
		$$
		I_{\CR}(1_A;\ve{y}) \geq \alpha^{|\CR|+1}-\eps
		$$
simultaneously for all $\CR \in \{\L_1,\cdots,\L_\ell\}$ and all $\CR \subset \L$ with algebraic complexity $0$.
	\end{prop}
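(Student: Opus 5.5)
Apply Proposition \ref{regularity} to $f=1_A$ with a small parameter $\eps_0=\eps_0(\eps,\CL)$ and a growth function $\eta$ chosen later. This gives a Bohr set $B=B(\Gamma,\rho)$ and the decomposition $f=f_{\str}+f_{\psr}+f_{\sml}$. The plan is to work with the weight $\mu(\ve{y})=\prod_{r=1}^m \nu(y_r)$, where $\nu$ is a smoothed normalized indicator of $B$ (for instance $1_{B'}*1_{B'}$ normalized, with $B'=B(\Gamma,\rho/2)$). The Fourier transform of $\mu$ is then a nonnegative product supported effectively on few frequencies, with $\|\wh{\nu}\|_{\ell^1}\ll_{|\Gamma|,\rho}1$. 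We count $\mu$-weighted popular $\ve{y}$. Since $\mu\le O_{\Gamma,\rho}(1)\cdot p^{-m}$-density times $1_{B^m}$, a lower bound on the $\mu$-mass of good $\ve{y}$ converts to at least $c p^m$ good $\ve{y}$.

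\textbf{Averages of counting operators.} For any $\CR\subset\CL$ of complexity at most $1$ and any twist $\xi$, expand $I_{\CR}(1_A;\ve{y})$ by multilinearity into $3^{|\CR|+1}$ terms. Multiply by $\mu(\ve{y})$ and average, and write $\mu$ by Fourier inversion as $\sum_{\ve\xi}\wh{\mu}(\ve\xi)e_p(\ve\xi\cdot\ve{y})$. Every term containing $f_{\psr}$ is then bounded, via Proposition \ref{prop:twisted-gvn}, by
\[
\|\wh\mu\|_{\ell^1}\,\|f_{\psr}\|_{U^2}^c\le \|\wh\mu\|_{\ell^1}\,\eta(|\Gamma|+\rho^{-1})^{-c/2}.
\]
This is small once $\eta$ grows fast enough. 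Terms containing $f_{\sml}$ are $O(\eps_0)$ after Cauchy--Schwarz in $x$, using $\|\mu\|_\infty$-free estimates. Here one should average in $x$ with the other $\ve{y}$-coordinates, since each term involves $f_{\sml}(x+L(\ve{y}))$ with $L\neq 0$ and $\mu$ is a product measure. I expect this to require care: the $L^2$ bound must hold uniformly under the weight. What remains is the $f_{\str}$ term. By \eqref{eq:translation-invariance}, for $\ve{y}$ in the support of $\mu$ it equals $\E_x f_{\str}(x)^{|\CR|+1}+O(\eps_0)$, which is at least $\alpha^{|\CR|+1}-O(\eps_0)$ by Hölder, since $\E f_{\str}=\alpha$.

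\textbf{Complexity $0$ subsets: almost all $\ve{y}$ are popular.} For $\CR$ of complexity $0$ we need a second moment, not just a first moment. I would show
\[
\E_{\ve{y}}\mu(\ve{y})\,\big|I_{\CR}(1_A;\ve{y})-\alpha^{|\CR|+1}\big|^2 = O(\eps_0)+o_\eta(1),
\]
after replacing $f_{\str}$ by a near-constant. Expanding the square gives a linear system in $(x,x',\ve{y})$ whose forms $x+L(\ve{y}),\ x'+L(\ve{y})$ still have complexity at most $1$ as a system in the variables $(x,x',\ve{y})$, because $\CR$ is linearly independent. Proposition \ref{prop:twisted-gvn}, applied in the form for several variables, then kills the $f_{\psr}$ terms. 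The main concern is $f_{\str}$, which is not constant; the structured contribution is $\E_x\prod f_{\str}(x)$ times the corresponding quantity for $x'$. So the variance of $I_{\CR}$ about $\E_x f_{\str}(x)^{|\CR|+1}$ is small. By Markov, all but an $O(\sqrt{\eps_0})$ fraction (in $\mu$-mass) of $\ve{y}$ are popular for every complexity-$0$ $\CR$ simultaneously.

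\textbf{Complexity $1$ subsets via the direct sum.} For each $\CL_i$, the first-moment computation gives
\[
\E\mu\, I_{\CL_i}\ge \alpha^{|\CL_i|+1}-O(\eps_0).
\]
Since $I_{\CL_i}\le 1$, a set of $\mu$-mass at least $c_i\gg \eps$ is popular for $\CL_i$. To get simultaneity, use the direct sum hypothesis: choose coordinates so that $\Span(\CL_i)$ involve disjoint blocks of variables. We can change variables by an invertible rational linear map $\ve{y}\mapsto M\ve{y}$ so that the $\CL_i$ depend on disjoint coordinate blocks $\ve{y}^{(i)}$. The main obstacle is doing this while keeping $\mu$ a product over blocks, because the Bohr set is not invariant under $M$. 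I would handle it by defining $\mu$ directly as a product of Bohr weights in the new coordinates. Translation invariance \eqref{eq:translation-invariance} still holds for $L(\ve{y})$ with rational coefficients, after shrinking $\rho$ by a constant depending on $\CL$. With this choice the popularity events for different $i$ depend on independent blocks. The $\mu$-mass of simultaneous popular $\ve{y}$ is then at least $\prod_i c_i$, which exceeds the exceptional mass $O(\sqrt{\eps_0})$ if $\eps_0$ is chosen small enough in terms of $\eps,\CL$. Finally, convert $\mu$-mass into a count of at least $cp^m$ points.
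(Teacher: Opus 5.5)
Your proposal is correct and follows essentially the same route as the paper: a $U^2$ regularity decomposition with a product Bohr weight $\mu$, a $\mu$-weighted first moment controlled by the twisted generalized von Neumann theorem for each $\CL_i$, a second-moment/Markov argument for the complexity-$0$ subsets, and independence across the disjoint coordinate blocks given by the direct-sum hypothesis (the paper does the second moment unweighted via \cite[Lemma~3.2]{CDH} and counts the exceptional set inside $B^m$, which is equivalent). A few minor fixes: the variance must be centered at $\E_x f_{\str}(x)^{|\CR|+1}$ rather than $\alpha^{|\CR|+1}$, as you eventually do, with H\"older giving the one-sided bound; rational coefficients should be removed by clearing denominators in the change of variables rather than by shrinking $\rho$, since division by an integer in $\F_p$ does not preserve Bohr sets; and the $f_{\sml}$ terms need no special care, being at most $\E_x|f_{\sml}(x)|\le\eps_0$ pointwise in $\ve{y}$.
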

	
	\begin{prop}\label{Main2-prop}
Let $\CL = \{L_1,\cdots,L_k\}$ be a finite collection of nonzero linear forms in a single variable in $\Z_0[y]$ with algebraic complexity at most $1$. Let $m$ be a positive integer. For a nonempty subset $I \subset [m]$, let $\CL_I$ be the collection of linear forms in $m$ variables $\ve{y} = (y_1,\cdots,y_m)$ consisting of $L_j(y_i)$ with $1 \leq j \leq k$ and $i \in I$. Then for any $\eps>0$ and $A\subset\F_{p}$ with density $\alpha>0$, there are at least $cp^m$ values of $\ve{y} \in \F_p^m$ for some constant $c = c(\CL, \eps) > 0$, such that
		$$
		I_{\CL_I}(1_A;\ve{y}) \geq \alpha^{|\CL_I|+1}-\eps
		$$
simultaneously for all nonempty subsets $I \subset [m]$.
	\end{prop}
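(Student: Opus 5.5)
We follow the outline in the introduction. Apply Proposition \ref{regularity} to $f=1_A$ with a parameter $\eps'$ and a growth function $\eta$, both to be chosen in terms of $\eps$ and $\CL$. This gives $1_A=f_{\str}+f_{\psr}+f_{\sml}$ and a Bohr set $B=B(\Gamma,\rho)$. Write $\mu$ for a normalized smoothing of $1_B$, for example $1_B*1_{B'}$ with $B'=B(\Gamma,\rho/2)$, and weight $\ve{y}=(y_1,\dots,y_m)$ by $\mu(y_1)\cdots\mu(y_m)$. This weight has total mass $\gg_{\eps,\eta}1$ times $p^m$. The aim is to show that a $\gg_{\eps,\eta} 1$ proportion of $\ve{y}\in B^m$ is popular for every $\CL_I$ simultaneously.

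\textbf{Step 1 (removing $f_{\psr}$ and $f_{\sml}$).} For each $I$, expand $I_{\CL_I}(1_A;\ve{y})$ multilinearly. Each term containing $f_{\psr}$ is averaged against the weight $\prod_i\mu(y_i)$. Expanding $\mu$ in Fourier series turns this into twisted averages, which Proposition \ref{prop:twisted-gvn} bounds. The family $\CL_I$ has complexity at most $1$, since copies of $\CL$ in separate variables introduce no new quadratic relations. The Fourier $\ell^1$ norm of $\mu$ is bounded in terms of $|\Gamma|,\rho$, so choosing $\eta$ large makes these contributions negligible. Terms containing $f_{\sml}$ are handled by Cauchy--Schwarz in $x$ for a fixed $\ve{y}$, giving an $L^1(\mu)$-average error $O(\eps')$. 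By Markov, all but a small proportion of $\ve{y}$ (with respect to $\mu^{\otimes m}$) therefore satisfy $I_{\CL_I}(1_A;\ve{y})\ge I_{\CL_I}(f_{\str};\ve{y})-\eps/2$ for all $I$ simultaneously.

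\textbf{Step 2 (structured part).} For $\ve{y}\in B^m$, \eqref{eq:translation-invariance} gives $f_{\str}(x+L_j(y_i))=f_{\str}(x)+O(\eps')$. Hence $I_{\CL_I}(f_{\str};\ve{y})=\E_x f_{\str}(x)^{|\CL_I|+1}+O(\eps')$, and Hölder (or Jensen) gives $\E f_{\str}^{n}\ge(\E f_{\str})^n=\alpha^n$. This alone would show that almost all $\ve{y}\in B^m$ are popular. The catch is that Step 1 only controls averages. For a single copy $I=\{i\}$, the pseudorandom contribution is small only on average over $y_i$, not for most $y_i$. This is precisely why Proposition \ref{prop:linear-1} gives only a positive proportion, and averaged smallness in $L^1$ does not yield pointwise smallness without a second-moment bound.

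\textbf{Main obstacle.} The difficulty is to control $\sum_I$ of the negative deviations simultaneously. The plan is to estimate the second moment $\E_{\ve{y}\sim\mu}\,|I_{\CL_I}(1_A;\ve{y})-I_{\CL_I}(f_{\str};\ve{y})|^2$. Squaring doubles the $x$-variable, so the system becomes $x,x',x+L_j(y_i),x'+L_j(y_i)$. If this doubled system still has complexity $1$, Proposition \ref{prop:twisted-gvn} applies again. That is not clear, so it cannot be relied on.

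If it fails, the fallback is the Ramsey and averaging route mentioned in the introduction. The first step is to establish the one-variable result, that a $\gg1$ proportion of $y\in B$ is popular for $\CL$, in the style of Proposition \ref{prop:linear-1}. Let $S\subset B$ be this set of popular $y$. For each $I$, the count $I_{\CL_I}(f;\ve{y})$ with $\ve{y}\in S^{|I|}$ should then be compared with the mixed moments $\E_x\prod_{i\in I}g_{y_i}(x)$, where $g_y(x)=\prod_j f(x+L_j(y))$. A Ramsey or pigeonhole argument over many-tuples drawn from $S$ should then produce a positive-density set of $\ve{y}$ on which each mixed moment is at least the product of its factors minus $\eps$. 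Each factor is $\gtrsim\alpha^{|\CL|+1}$, and the product structure of $f_{\str}$ on $B$ gives the approximate factorization needed.

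I expect this simultaneous lower bound for the mixed moments to be the hardest step.
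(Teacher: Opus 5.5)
\textbf{Verdict: there is a genuine gap.} Your fallback points the right way, since the paper also uses Ramsey theory, but it leaves out the mechanism that makes the Ramsey argument work, and as stated it aims at too weak a bound.

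\textbf{Why the main line fails.} Steps 1--2 do not go through, as you partly recognise. Expanding $\mu^{\otimes m}$ in Fourier series only bounds the \emph{signed} average of each $f_{\psr}$-term, so the Markov step is unjustified. The second-moment repair also genuinely fails. The doubled system has complexity $2$ as soon as $\CL$ satisfies a linear relation; for $\CL=\{y,2y\}$ one has $x^2-2(x+y)^2+(x+2y)^2=2y^2=x'^2-2(x'+y)^2+(x'+2y)^2$. So Proposition \ref{prop:twisted-gvn} is unavailable.

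\textbf{Problems with the fallback.} A mixed moment at least ``the product of its factors'', with each factor $\gtrsim\alpha^{|\CL|+1}$, only gives $I_{\CL_I}(1_A;\ve y)\ge\alpha^{|I|(|\CL|+1)}-\eps$. That falls short of the required $\alpha^{|I||\CL|+1}-\eps$ by a factor $\alpha^{|I|-1}$. Invoking ``the product structure of $f_{\str}$ on $B$'' at a fixed tuple $\ve y$ brings back exactly the pointwise control of $f_{\psr}$ that you showed is unavailable. Finally, Ramsey's theorem by itself only produces homogeneous sets; it says nothing about which colour they receive.

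\textbf{The missing idea.} Once you have the set $D$ of popular differences for $\CL$ from Proposition \ref{prop:linear-1}, no further regularity is needed. For \emph{every} finite $S\subset D$, an exact identity followed by H\"older's inequality gives
\[
\E_{\ve y\in S^t}I_{\CL_{[t]}}(1_A;\ve y)=\E_x 1_A(x)\Big(\E_{y\in S}\prod_{L\in\CL}1_A(x+L(y))\Big)^t\ge\alpha^{1-t}\Big(\E_{y\in S}I_{\CL}(1_A;y)\Big)^t\ge\alpha^{t|\CL|+1}-O(\eps).
\]
Call a $t$-tuple good if $I_{\CL_{[t]}}(1_A;\ve y)\ge\alpha^{t|\CL|+1}-C\eps$; this is symmetric in the coordinates, so it colours $t$-subsets. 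Start from an arbitrary $R_1$-element set $D_1\subset D$. Pass successively to subsets $D_1\supset D_2\supset\cdots\supset D_m$ of large but bounded sizes, with $D_t$ homogeneous for the colouring of $t$-subsets. Tuples with a repeated coordinate contribute at most $t^2/|D_t|$ to the average. So the displayed bound with $S=D_t$ rules out the bad colour. Hence every $\ve y$ with distinct coordinates in $D_m$ is simultaneously popular for all $\CL_I$. Double counting over the $R_1$-element subsets of $D$ then yields $\gg_\eps p^m$ such $\ve y$.
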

	
	\begin{prop}\label{Main3-prop}
	Let $\CL_1$ and $\CL_2$ be either one of the following pairs of linear forms in two variables $y,z$:
	\begin{enumerate}
\item $\CL_1 = \{y, 2y\}$ and $\CL_2 = \{y, 2y, z\}$;
\item $\CL_1 = \{z, 2z\}$ and $\CL_2 = \{y, z, y+z\}$.
	\end{enumerate}	
	Then for any $\eps > 0$ and $A \subset \F_p$ with density $\alpha>0$, there are at least $cp^2$ values of $(y,z) \in \F_p^2$ for some constant $c = c(\eps) > 0$, such that
		$$
		I_{\CL_i}(1_A;y,z) \geq \alpha^{|\CL_i|+1}-\eps
		$$
simultaneously for each $i \in \{1,2\}$.
	\end{prop}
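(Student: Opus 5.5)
Both cases reduce to a one-variable simultaneous popular difference statement. The reduction averages out the variable that enters only through a complexity-$0$ or Cauchy--Schwarz-type structure. The key elementary observation is the following. If $F:\F_p\to[0,1]$ has mean $\mu$, then for any $\eps>0$ at least $\eps p$ values of $t$ satisfy $F(t)\geq \mu-\eps$. Indeed, otherwise
$$\mu\leq \frac{|\{t:F(t)\geq\mu-\eps\}|}{p}+(\mu-\eps)<\eps+\mu-\eps=\mu,$$
a contradiction. Throughout we write $f=1_A$.

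\emph{Case (1).} Fix $y$ and set $g_y(x)=f(x)f(x+y)f(x+2y)$. Then
$$I_{\CL_2}(1_A;y,z)=\E_x g_y(x)f(x+z)=:F_y(z).$$
This is a $[0,1]$-valued function of $z$ with mean $\alpha\,\E_x g_y=\alpha I_{\CL_1}(1_A;y)$.

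Next apply Proposition~\ref{Main-prop} with $m=1$, $\CL=\CL_1=\{y,2y\}$ and $\ell=1$; alternatively, use Green's theorem directly. This gives at least $c_0p$ values of $y$ with $I_{\CL_1}(1_A;y)\geq\alpha^3-\eps$. For each such $y$, the observation yields at least $\eps p$ values of $z$ with
$$F_y(z)\geq \alpha(\alpha^3-\eps)-\eps\geq\alpha^4-2\eps.$$
This gives at least $c_0\eps p^2$ pairs $(y,z)$ that are popular for both $\CL_1$ and $\CL_2$, with $2\eps$ in place of $\eps$. Replacing $\eps$ by $\eps/2$ finishes this case.

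\emph{Case (2).} Fix $z$ and put $h_z(x)=f(x)f(x+z)$. Then
$$I_{\CL_2}(1_A;y,z)=\E_x h_z(x)h_z(x+y)=:G_z(y).$$
This is $[0,1]$-valued in $y$ with mean $(\E h_z)^2=I_{\{z\}}(1_A;z)^2$.

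It therefore suffices to find at least $c_0p$ values of $z$ that are simultaneously popular for the $3$-AP pattern $\{z,2z\}$ and for the two-point pattern $\{z\}$. We get these from Proposition~\ref{Main-prop} with $m=1$, $\CL=\CL_1=\{z,2z\}$ and $\ell=1$. The family $\{z,2z\}$ has algebraic complexity $1$, and its complexity-$0$ subsets include $\{z\}$. So that proposition produces $c_0p$ values of $z$ with
$$I_{\{z,2z\}}(1_A;z)\geq\alpha^3-\eps,\qquad \E h_z\geq\alpha^2-\eps.$$
For each such $z$, the averaging observation gives at least $\eps p$ values of $y$ with
$$G_z(y)\geq(\alpha^2-\eps)^2-\eps\geq\alpha^4-3\eps.$$
Hence at least $c_0\eps p^2$ pairs $(y,z)$ work, and rescaling $\eps$ completes the proof.

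The only nontrivial input is the one-variable simultaneous popularity of $z$ for $\{z,2z\}$ and $\{z\}$ in case (2). Everything else is the averaging trick above. That input is exactly the $m=1$, $\ell=1$ instance of Proposition~\ref{Main-prop}, so it poses no new obstacle. If one wanted to avoid it, one would run Green's $U^2$-regularity argument with Proposition~\ref{regularity}: restrict $z$ to the Bohr set $B(\Gamma,\rho)$. There both counts are governed by $f_{\str}$, the count for $\{z\}$ being $\E f_{\str}^2+O(\eps)\geq\alpha^2-O(\eps)$ by Cauchy--Schwarz. The pseudorandom part contributes negligibly to both counts, by Proposition~\ref{prop:twisted-gvn}.
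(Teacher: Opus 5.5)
Your proposal is correct and follows the paper's proof essentially step for step. In case (1) you average over $z$ using $\E_z I_{\CL_2}(1_A;y,z)=\alpha I_{\CL_1}(1_A;y)$; in case (2) you first get $z$ simultaneously popular for $\{z,2z\}$ and $\{z\}$ from Proposition~\ref{Main-prop}, then average over $y$ using $\E_y I_{\CL_2}(1_A;y,z)=I_{\{z\}}(1_A;z)^2$, exactly as the paper does. The only difference is cosmetic: in case (1) you cite Proposition~\ref{Main-prop} (or Green's theorem) where the paper cites Proposition~\ref{prop:linear-1} directly.
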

		
In the rest of this section, we prove Theorem \ref{poly-to-linear}. The proofs of Propositions \ref{Main-prop}, \ref{Main2-prop}, and \ref{Main3-prop} are given in Section \ref{sec:spd}.		
		
\subsection{Joint distribution of $I_{\CR_a}$ and $I_{\CL_a}$}

Lying at the heart of Theorem \ref{poly-to-linear} is the following result connecting the joint distribution of $I_{\CR_a}$ for $1 \leq a \leq q$ with that of $I_{\CL_a}$.

\begin{prop}\label{joint}
Let the notations and assumptions be as in the statement of Theorem \ref{poly-to-linear}. Let $f: \F_p \rightarrow \C$ be a $1$-bounded function. For any fixed non-negative integers $n_1,\cdots,n_q$ we have
$$
\E_y \prod_{a=1}^q I_{\CR_a}(f;y)^{n_a} = \E_{\ve{y}}  \prod_{a=1}^q I_{\CL_a}(f;\ve{y})^{n_a} + o(1).
$$
Moreover, if $f$ takes values in $[0,1]$, then for any fixed continuous function $\Phi: [0,1]^q\rightarrow\R$ we have
$$
\E_y \Phi(I_{\CR_1}(f;y),\cdots, I_{\CR_q}(f;y)) = \E_{\ve{y}}  \Phi(I_{\CL_1}(f;\ve{y}), \cdots, I_{\CL_q}(f;\ve{y})) + o(1).
$$
\end{prop}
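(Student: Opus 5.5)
The plan is to prove the moment identity first and then deduce the statement for $\Phi$ by polynomial approximation. Write $N=n_1+\cdots+n_q$. Expanding each $I_{\CR_a}(f;y)^{n_a}$ as an $n_a$-fold average gives
$$
\E_y \prod_{a=1}^q I_{\CR_a}(f;y)^{n_a}=\E_{x_1,\dots,x_N,y}\prod_{s=1}^N\Big(f(x_s)\prod_{P\in\CR_{a(s)}} f(x_s+P(y))\Big),
$$
where $a(s)$ records which block the copy $s$ belongs to. The same expansion, with $L_j(\ve{y})$ in place of $P_j(y)$ and $\E_{\ve{y}}$ in place of $\E_y$, gives the linear side. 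So the moment identity is the statement that the polynomial configuration
$$
\{x_s+P_j(y)\}, \qquad \{x_s\},
$$
in the variables $(x_1,\dots,x_N,y)$ is counted, up to $o(1)$, by its linear model $\{x_s+L_j(\ve{y})\}$. Here $j$ ranges over indices with $P_j\in\CR_{a(s)}$.

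The main step, and the one I expect to be the obstacle, is a multi-base-point version of Theorem \ref{counting lem}, since that theorem has only one free variable $x$. I would handle this as follows.

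\begin{enumerate}
\item Run a hybrid argument over the blocks $s=1,\dots,N$, replacing $P_j(y)$ by $L_j(\ve{y})$ one block at a time.
\item At step $s$, fix all $x_t$ with $t\neq s$ and absorb the other blocks into a weight $w(y)$ (or $w(\ve{y})$). This leaves a single-$x$ average of the pattern $\CR_{a(s)}$ twisted by $w$.
\item The weight $w$ is itself built from translates $f(x_t+P_j(y))$, so its polynomials have nonzero constant terms. Theorem \ref{counting lem} does not directly allow this.
\end{enumerate}

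Because of step 3, I would instead appeal to the proof of Kuca's counting lemma (PET induction plus degree lowering) applied to the whole system $\{x_s+P_j(y)\}$. The hypothesis it really uses is that the only algebraic relations among the pattern have degree at most $1$. The relations of the multi-$x$ system come only from relations within each copy, because the $x_s$ are independent. Each copy is a subset of $\CP$, which has complexity at most $1$, so the joint system still has complexity at most $1$. Degree lowering then reduces the count to $U^2$-type control, which is exactly matched by the linear model in $\ve{y}$.

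Linear independence of $Q_1,\dots,Q_m$ guarantees that the linear model has the same linear relations as the polynomial one. So the Fourier-analytic main terms agree, and Proposition \ref{prop:twisted-gvn} controls the linear side. If this general version is not available verbatim, the fallback is to prove it block by block: use Cauchy--Schwarz in the fixed variables to bound the error by a $U^2$-type norm of the dual function, then apply Theorem \ref{counting lem} to the resulting doubled single-$x$ configuration.

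For the second assertion, suppose $f$ takes values in $[0,1]$. Then every $I_{\CR_a}(f;y)$ and every $I_{\CL_a}(f;\ve{y})$ lies in $[0,1]$. Given $\delta>0$, the Stone--Weierstrass theorem gives a polynomial $\Psi$ in $q$ variables with $\sup_{[0,1]^q}|\Phi-\Psi|\le\delta$. By linearity, the moment identity applied to the finitely many monomials of $\Psi$ yields
$$
\E_y\Psi(I_{\CR_1}(f;y),\dots,I_{\CR_q}(f;y))=\E_{\ve{y}}\Psi(I_{\CL_1}(f;\ve{y}),\dots,I_{\CL_q}(f;\ve{y}))+o_\delta(1).
$$
Replacing $\Psi$ by $\Phi$ costs at most $2\delta$. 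Letting $p\to\infty$ and then $\delta\to0$ gives the claimed $o(1)$, which is uniform in $f$ since the moment errors are.
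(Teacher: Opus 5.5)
Your expansion of the moments into $N=n_1+\cdots+n_q$ copies and your Weierstrass deduction for $\Phi$ are fine and agree with the paper. The central comparison, however, is not established, and the route you propose for it does not work.

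First, the hybrid in step 1 cannot be set up. All $N$ copies share the same difference variable ($y$ on one side, $\ve{y}$ on the other). An intermediate expression in which some copies use $P_j(y)$ and others use $L_j(\ve{y})$ must therefore treat $y$ and $\ve{y}$ as independent. Consecutive hybrids then differ in how the copies are coupled, not by a one-block replacement that Theorem \ref{counting lem} could handle.

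Second, your claim that the joint system $\{x_s,\ x_s+P_j(y)\}$ in $(x_1,\dots,x_N,y)$ has complexity at most $1$ and is $U^2$-controlled is false. Already for two copies of $\{y,2y\}$ there is the quadratic relation
\[
\bigl[x_1^2-2(x_1+y)^2+(x_1+2y)^2\bigr]-\bigl[x_2^2-2(x_2+y)^2+(x_2+2y)^2\bigr]=0 .
\]
Correspondingly, take $f_0=f_2=e_p(x^2)$ and $f_1=e_p(-2x^2)$. Then $|\E_x f_0(x)f_1(x+y)f_2(x+2y)|=|e_p(2y^2)|=1$ for every $y$, while each $\|f_i\|_{U^2}=p^{-1/4}$. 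So second moments of $I_{\{y,2y\}}$ are not controlled by $U^2$; this is exactly what the constructions in Sections \ref{sec:nspd1} and \ref{sec:nspd3} exploit. Hence no PET/degree-lowering argument can reduce the moment comparison to $U^2$ control. Your Cauchy--Schwarz fallback is too vague to repair this.

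The missing idea is a one-line reduction to a single base point, after which Theorem \ref{counting lem} applies verbatim. Insert an extra average over $h\in\F_p$ and replace every $x_{a,u}$ by $x_{a,u}+h$; this does not change the average. Fix $\ve{x}=(x_{a,u})$. Since every $\CR_a\subset\CP$, the integrand regroups as $F_{\ve{x},0}(h)\prod_{j=1}^k F_{\ve{x},j}(h+P_j(y))$, where
\[
F_{\ve{x},0}(h)=\prod_{a,u}f(x_{a,u}+h), \qquad F_{\ve{x},j}(h)=\prod_{a:P_j\in\CR_a}\prod_{u\le n_a}f(x_{a,u}+h)
\]
are $1$-bounded. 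The same regrouping holds on the linear side with $L_j(\ve{y})$ in place of $P_j(y)$. Now apply Theorem \ref{counting lem} to $\CP$ (complexity at most $1$) with these functions; its $o(1)$ is uniform in the functions, so it compares the two sides for each fixed $\ve{x}$. Averaging over $\ve{x}$ then gives the moment identity.

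This route needs no statement about the complexity of the multi-copy system. Its quadratic relations, like the one displayed above, are simply inherited by the linear model. This also resolves the obstacle you flagged in step 3: you never fix the other base points, only their offsets from the common shift $h$.
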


\begin{proof}
Expanding the product using the definition of $I_{\CR_a}(f;y)$, we can write
$$
\E_y \prod_{a=1}^q I_{\CR_a}(f;y)^{n_a} = \E_{\ve{x}} \E_y \prod_{a=1}^q \prod_{u=1}^{n_a} f(x_{a,u}) \prod_{P \in \CR_a} f(x_{a,u} + P(y)).
$$
Introducing an additional average over $h \in \F_p$ and shifting $\ve{x}$ by $h$, we obtain
$$
\E_y \prod_{a=1}^q I_{\CR_a}(f;y)^{n_a} = \E_{\ve{x}} \E_{h,y} \prod_{a=1}^q \prod_{u=1}^{n_a} f(x_{a,u}+h) \prod_{P \in \CR_a} f(x_{a,u}+h + P(y)).
$$
Since the polynomial $P$ above must be from $\CP=\{P_1,\cdots,P_k\}$, for fixed $\ve{x},h,y$ we may reorganize the product above as
$$
\Big(\prod_{a=1}^q\prod_{u=1}^{n_a} f(x_{a,u}+h)\Big) \Big(\prod_{j=1}^k \prod_{a: P_j \in \CR_a} \prod_{u=1}^{n_a} f(x_{a,u} + h + P_j(y))\Big) = F_{\ve{x},0}(h) \prod_{j=1}^k F_{\ve{x},j}(h+P_j(y)),
$$
where
$$
F_{\ve{x},0}(h) = \prod_{a=1}^q\prod_{u=1}^{n_a} f(x_{a,u}+h), \qquad F_{\ve{x},j}(h) = \prod_{a: P_j \in \CR_a} \prod_{u=1}^{n_a} f(x_{a,u} + h)
$$
for $1 \leq j \leq k$. Thus we have
$$
\E_y \prod_{a=1}^q I_{\CR_a}(f;y)^{n_a} = \E_{\ve{x}} \E_{h,y}F_{\ve{x},0}(h) \prod_{j=1}^k F_{\ve{x},j}(h+P_j(y)).
$$
The same argument applied to moments of $I_{\CL_a}(f;\ve{y})$ gives
$$
\E_{\ve{y}} \prod_{a=1}^q I_{\CL_a}(f;\ve{y})^{n_a} = \E_{\ve{x}} \E_{h,\ve{y}}F_{\ve{x},0}(h) \prod_{j=1}^k F_{\ve{x},j}(h+L_j(\ve{y})).
$$
Since $\CP$ has algebraic complexity at most $1$, we may apply Theorem \ref{counting lem} to conclude for each $\ve{x}$ that 
$$
\E_{h,y} F_{\ve{x},0}(h) \prod_{j=1}^k F_{\ve{x},j}(h+P_j(y)) = \E_{h,\ve{y}} F_{\ve{x},0}(h) \prod_{j=1}^k F_{\ve{x},j}(h+L_j(\ve{y})) + o(1).
$$
This establishes the first claim. The second claim about continuous $\Phi$ follows by approximating $\Phi$ by polynomials via the Weierstrass theorem.
\end{proof}

\subsection{Deduction of Theorem \ref{poly-to-linear} from Proposition \ref{joint}}

Let $\varphi: \R\rightarrow [0,1]$ be a continuous approximation to the indicator function of $[-2\eps,+\infty)$, with the property that
$$
1_{\{t \geq -\eps\}} \leq \varphi(t) \leq 1_{\{t \geq -2\eps\}}.
$$
By Proposition \ref{joint} applied to the function
$$
\Phi(t_1,\cdots,t_q) = \prod_{a=1}^q \varphi(t_a - \alpha^{|\CR_a|+1}),
$$
we conclude that
$$
\E_y \Phi(I_{\CR_1}(1_A;y),\cdots, I_{\CR_q}(1_A;y)) = \E_{\ve{y}}  \Phi(I_{\CL_1}(1_A;\ve{y}), \cdots, I_{\CL_q}(1_A;\ve{y})) + o(1).
$$
If $\ve{y}$ is a simultaneous popular difference for $\CL_1,\cdots,\CL_q$ in the sense that $I_{\CL_a}(1_A;\ve{y}) - \alpha^{|\CL_a|+1} \geq - \eps$ for every $a$, then $\Phi(I_{\CL_1}(1_A;\ve{y}),\cdots,I_{\CL_q}(1_A;\ve{y})) = 1$ by the construction of $\Phi$. Since there are at least $\beta p^m$ values of such $\ve{y}$, we have
$$
\E_{\ve{y}}  \Phi(I_{\CL_1}(1_A;\ve{y}), \cdots, I_{\CL_q}(1_A;\ve{y})) \geq \beta,
$$
and therefore
$$
\E_y \Phi(I_{\CR_1}(1_A;y),\cdots, I_{\CR_q}(1_A;y)) \geq \beta - o(1).
$$
On the other hand, if $\Phi(I_{\CR_1}(1_A;y),\cdots,I_{\CR_q}(1_A;y)) > 0$ for some $y \in \F_p$, then $I_{\CR_a}(1_A;y) - \alpha^{|\CR_a|+1} \geq -2\eps$ for every $a$ by the construction of $\Phi$. Since there are at least $(\beta - o(1))p$ values of such $y$, the desired conclusion follows.

	\section{Simultaneous popular difference of linear progressions}\label{sec:spd}
	
In this section we prove Propositions \ref{Main-prop}, \ref{Main2-prop}, and \ref{Main3-prop}. We allow all implied constants to depend on the given linear forms in the statements. It suffices to obtain the conclusion in Proposition \ref{Main-prop} with 
$$
I_{\CR}(1_A;\ve{y}) \geq \alpha^{|\CR|+1}-O(\eps),
$$
and similarly for Propositions \ref{Main2-prop} and \ref{Main3-prop}.

\subsection{The weight function via arithmetic regularity}
Let $A \subset \F_p$ be a subset with density $\alpha$.	
Let $\eta: \R_+\rightarrow\R_+$ be a growth function depending on $\eps$ which grows rapidly enough. By Proposition \ref{regularity}, we can find $\Gamma \subset \F_p$ with $|\Gamma| = O_\eps(1)$, $\rho \gg_{\eps}1$,
and a decomposition 
$$
1_A = f_{\str} + f_{\psr} + f_{\sml},
$$ 
where
		\begin{enumerate}
				\item[(a)] \(f_{\str}, f_{\psr}, f_{\sml}\) are all \(1\)-bounded,
				\item[(b)] \(f_{\str}\) is nonnegative, has mean \(\E f_{\str} =\alpha\), and obeys
				\[
				f_{\str}(x + d) = f_{\str}(x) + O(\eps)
				\]
				whenever \(x \in \F_p\) and \(d \in B := B(\Gamma, \rho)\),
				\item[(c)] \(\|\widehat{f_{\psr}}\|_{\infty} \le 1/\eta(|\Gamma|+\rho^{-1})\),
				\item[(d)] \(\|f_{\sml}\|_{2} \le\eps\).
			\end{enumerate}
Let $B_0 = B(\Gamma,\rho/2)$.
Define $w:\F_p\rightarrow \R_+$ and $\mu: \F_p^m\rightarrow \R_+$ by
$$
w = \Big(\frac{p}{|B_0|}\Big)^2 1_{B_0}*1_{B_0}, \qquad \mu(\ve{y}) = \prod_{r=1}^m w(y_r),
$$
where $\ve{y} = (y_1,\cdots,y_m)$. Thus $w$ and $\mu$ have average $1$, $w$ is supported on $B$, $\|w\|_{\infty} = O_{|\Gamma|,\rho}(1)$, and $\sum_{\xi \in \F_p} |\wh{w}(\xi)| = O_{|\Gamma|,\rho}(1)$.

\begin{prop}\label{prop:linear-0}
Let the notations and assumptions be as above. For any finite collection $\CL$ of nonzero linear forms in $m$ variables in $\Z_0[\ve{y}]$ with algebraic complexity $0$, we have 
$$
I_{\CL}(1_A;\ve{y}) \geq \alpha^{|\CL|+1} - O(\eps)
$$
for all $\ve{y} \in B^m$ outside an exceptional set of size $O(p^m/(\eps^2\eta(|\Gamma|+\rho^{-1})))$.
	\end{prop}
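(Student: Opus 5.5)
Write $\CL=\{L_1,\dots,L_k\}$. Complexity $0$ means $L_1,\dots,L_k$ are linearly independent over $\Q$, hence over $\F_p$ for $p$ large. The plan is to substitute $1_A=f_{\str}+g$ with $g=f_{\psr}+f_{\sml}$ into $I_{\CL}(1_A;\ve{y})$ and expand multilinearly into $2^{k+1}$ terms.

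\textbf{The all-structured term.} This is $\E_x f_{\str}(x)\prod_i f_{\str}(x+L_i(\ve{y}))$. For $\ve{y}\in B^m$, \eqref{eq:translation-invariance} lets us replace each $f_{\str}(x+L_i(\ve{y}))$ by $f_{\str}(x)$ at cost $O(\eps)$. The term is therefore $\E_x f_{\str}(x)^{k+1}+O(\eps)$. By H\"older and $f_{\str}\ge0$, $\E f_{\str}^{k+1}\ge(\E f_{\str})^{k+1}=\alpha^{k+1}$. Note that this holds pointwise for every $\ve{y}\in B^m$, with no exceptional set.

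\textbf{Terms containing at least one $g$.} Fix such a term, with a factor $g$ at some position $i_0$ (position $0$ meaning the $f(x)$ slot). I would not bound it pointwise but in mean square over $\ve{y}\in\F_p^m$:
\[
T(\ve{y})=\E_x \prod_{i=0}^k h_i(x+L_i(\ve{y})),\qquad L_0=0,
\]
where each $h_i\in\{f_{\str},g\}$ and $h_{i_0}=g$. Expanding $\E_{\ve{y}}|T(\ve{y})|^2$ in Fourier gives
\[
\sum_{\xi_0+\cdots+\xi_k=0,\ \sum_i\xi_iL_i=0}\ \sum_{\xi'\text{ similarly}} \prod_i \wh{h_i}(\xi_i)\overline{\wh{h_i}(\xi'_i)}.
\]
This is valid because linear independence of the $L_i$ forces all $\xi_i$ with $i\ge1$ to vanish, and then $\xi_0=0$ too. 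Hence $T(\ve{y})$ has mean square $\prod_i|\E h_i|^2$. A cleaner way to see this: for independent $L_i$ the map $(x,\ve{y})\mapsto (x+L_i(\ve{y}))_i$ is equidistributed in $\F_p^{k+1}$ after averaging over $x$, so $\E_{\ve{y}}|T|^2=\E_{x,x',\ve{y}}\prod_i h_i(x+L_i)\overline{h_i(x'+L_i)}$ factors into $\prod_i |\E h_i|^2$ up to the relation on the $x,x'$ differences. I would check this computation carefully; it is the step where complexity $0$ is used. It yields $\E_{\ve{y}}|T|^2\le |\E g|^2$.

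\textbf{Main obstacle.} That bound is too weak, since $\E g$ is tied to $f_{\sml}$ and we need $\eta$-smallness. I would therefore treat $f_{\psr}$ and $f_{\sml}$ separately. For $f_{\sml}$, Cauchy--Schwarz bounds the terms by $\|f_{\sml}\|_2\le\eps$ on average; more precisely $\E_{\ve{y}}|T|\le\|f_{\sml}\|_2$ for each such term. Markov's inequality then makes the contribution $O(\eps)$ off a set of measure $O(1)$, which is not good enough.

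Better: handle all $g$-terms together by writing $1_A$ versus $f_{\str}$ directly. Since $I_{\CL}$ is exactly computable in mean square, the difference $D(\ve{y})=I_{\CL}(1_A;\ve{y})-I_{\CL}(f_{\str}+f_{\sml};\ve{y})$ involves $f_{\psr}$, and the Fourier expansion above (with the $\xi$ constraint $\xi_0=\cdots=\xi_k=0$ after one of the twists) gives $\E_{\ve{y}}|D|^2\ll \|\wh{f_{\psr}}\|_\infty^2\le \eta^{-2}$. Chebyshev's inequality then makes $|D|\le\eps$ outside $O(p^m/(\eps^2\eta^2))\subset O(p^m/(\eps^2\eta))$ points. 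For $f_{\sml}$ terms I use that $f_{\str}+f_{\sml}\ge$ something: on $B^m$ one compares $I_{\CL}(f_{\str}+f_{\sml})$ with $\E_x (f_{\str}+f_{\sml})(x)\prod_i f_{\str}(x)$-type expressions, using $\|f_{\sml}\|_2\le\eps$ and shift-invariance of $f_{\str}$. Making this last comparison pointwise in $\ve{y}$ is the delicate step. I would aim to show that each term containing $f_{\sml}$ at position $i$ is bounded by $\E_x|f_{\sml}(x+L_i(\ve{y}))|\cdot\prod_{j\ne i}(\cdot)\le\|f_{\sml}\|_1\le\eps$, after freezing the other factors via invariance. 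This holds when the other factors are all $f_{\str}$ or $f_{\sml}$, and by Cauchy--Schwarz on $x$ with two $f_{\sml}$ factors we get $\le\|f_{\sml}\|_2^2$.
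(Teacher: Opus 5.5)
Your overall structure matches the paper's. The all-structured term is handled by Bohr-set invariance and H\"older on $B^m$, terms containing $f_{\sml}$ are bounded pointwise, and terms containing $f_{\psr}$ are controlled in mean square over $\ve{y}$ and then by Chebyshev. The paper simply quotes \cite[Lemma 3.2]{CDH} for the mean-square bound.

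However, your derivation of that mean-square bound is wrong, and it is the one step where complexity $0$ is used. Your displayed expansion imposes $\sum_i\xi_iL_i=0$ and $\sum_i\xi_i'L_i=0$ separately. That is the expansion of $|\E_{\ve{y}}T(\ve{y})|^2$, not of $\E_{\ve{y}}|T(\ve{y})|^2$. Your conclusion that $T$ has mean square $\prod_i|\E h_i|^2=|\E_{\ve{y}}T|^2$ would say $T$ has zero variance, i.e.\ is constant in $\ve{y}$. Your derivation used nothing about the $h_i$, and the claim is false in general: for $\CL=\{y_1\}$ and $h_0=h_1=1_A$, $T(\ve{y})$ equals $\alpha$ at $\ve{y}=0$ but has mean $\alpha^2$. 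Note also that $\E g=\E 1_A-\E f_{\str}=0$, so your identity would make every $g$-term vanish identically. The problem is not that the bound is ``too weak''. Your final bound $\E_{\ve{y}}|D|^2\ll\|\wh{f_{\psr}}\|_\infty^2$ is explicitly read off from this expansion, so as written it is unjustified.

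The correct computation is short. Averaging $|T|^2$ over $\ve{y}$ produces $\E_{\ve{y}}e_p\bigl(\sum_{i}(\xi_i-\xi_i')L_i(\ve{y})\bigr)$. This vanishes unless $\sum_i(\xi_i-\xi_i')L_i\equiv 0$. Linear independence of $L_1,\dots,L_k$ over $\F_p$ then forces $\xi_i=\xi_i'$ for all $i$, not $\xi_i=0$. Hence
\[
\E_{\ve{y}}|T(\ve{y})|^2=\sum_{\xi_0+\cdots+\xi_k=0}\ \prod_{i=0}^k|\wh{h_i}(\xi_i)|^2\le\|\wh{h_{i_0}}\|_\infty^2\prod_{i\neq i_0}\|h_i\|_2^2,
\]
by Parseval, since $\xi_{i_0}$ is determined by the other frequencies. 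Take $h_{i_0}=f_{\psr}$, with the other factors bounded by $2$. This gives $\E_{\ve{y}}|D|^2\ll\|\wh{f_{\psr}}\|_\infty^2$, and your Chebyshev step then goes through.

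The $f_{\sml}$ terms need no invariance, no Cauchy--Schwarz, and nothing ``delicate''. Every other factor is $1$-bounded, so each such term is at most $\E_x|f_{\sml}(x+L_i(\ve{y}))|=\|f_{\sml}\|_1\le\|f_{\sml}\|_2\le\eps$ for every $\ve{y}$, exactly as in the paper. Your earlier worry that this bound only holds on average in $\ve{y}$ is therefore mistaken.
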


\begin{proof}
Using the decomposition from the regularity lemma, we can write $I_{\CL}(1_A;\ve{y})$ as a sum of terms of the form
$$
\E_{x \in \F_p} \prod_{L \in \CL \cup \{0\}} f_L(x + L(\ve{y})),
$$
where each $f_L$ is one of $f_{\str},f_{\psr},f_{\sml}$. First consider the term with $f_L = f_{\str}$ for every $L$. For $L \in \CL$ and $\ve{y} \in B^m$, by the previous observation \eqref{eq:translation-invariance} we have
$$
f_{\str}(x+L(\ve{y})) = f_{\str}(x) + O(\eps)
$$
for every $x \in \F_p$. It follows that
$$
\E_{x \in \F_p} \prod_{L \in \CL \cup \{0\}} f_{\str}(x + L(\ve{y})) \geq \E_{x \in \F_p} f_{\str}(x)^{|\CL|+1} - O(\eps)
\geq \alpha^{|\CL|+1} - O(\eps).
$$
Now consider the terms with $f_L = f_{\sml}$  for some $L$. In this case, by the triangle inequality we have
$$
\Big|\E_{x \in \F_p} \prod_{L \in \CL \cup \{0\}} f_L(x + L(\ve{y}))\Big| \leq \E_x |f_{\sml}(x)| \leq \eps.
$$
Finally, consider the terms with $f_L = f_{\psr}$ for some $L$. Then by \cite[Lemma 3.2]{CDH} we have
$$
\E_{\ve{y}} \Big|\E_{x \in \F_p} \prod_{L \in \CL \cup \{0\}} f_L(x + L(\ve{y}))\Big|^2 \leq \|\wh{f_{\psr}}\|_{\infty} \leq 1/\eta(|\Gamma|+\rho^{-1}).
$$
(This step uses the assumption that $\CL$ has complexity $0$, so that the pattern obtained after expanding the square on the left-hand side has complexity $1$.) Thus in this case we have
$$
 \Big|\E_{x \in \F_p} \prod_{L \in \CL \cup \{0\}} f_L(x + L(\ve{y}))\Big| \leq \eps
 $$
for all but at most $1/(\eps^2\eta(|\Gamma|+\rho^{-1}))$ proportion of $\ve{y}$. The desired conclusion follows.
\end{proof}
	
	\begin{prop}\label{prop:linear-1}
Let the notations and assumptions be as above. For any finite collection $\CL$ of nonzero linear forms in $m$ variables in $\Z_0[\ve{y}]$ with algebraic complexity at most $1$, we have 
$$
\E_{\ve{y}} \mu(\ve{y}) I_\CL(1_A;\ve{y}) \geq \alpha^{|\CL|+1} - O(\eps).
$$
In particular, there are $\gg_{|\Gamma|,\rho}\eps p^m$ values of $\ve{y} \in B^m$ such that
$$
I_{\CL}(1_A;\ve{y}) \geq \alpha^{|\CL|+1}-O(\eps).
$$
	\end{prop}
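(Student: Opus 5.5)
Since $I_{\CL}$ is multilinear in its $|\CL|+1$ arguments, substituting $1_A = f_{\str}+f_{\psr}+f_{\sml}$ splits $\E_{\ve{y}}\mu(\ve{y})I_\CL(1_A;\ve{y})$ into $3^{|\CL|+1}$ terms of the form
$$
\E_{\ve{y}}\mu(\ve{y})\E_x \prod_{L\in\CL\cup\{0\}} f_L(x+L(\ve{y})),
$$
where each $f_L \in \{f_{\str},f_{\psr},f_{\sml}\}$. There are three kinds of terms, each handled differently.

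\emph{The all-structured term.} Here $\mu$ is supported on $B^m$ and has average $1$, and \eqref{eq:translation-invariance} gives $f_{\str}(x+L(\ve{y}))=f_{\str}(x)+O(\eps)$ for $\ve{y}\in B^m$. Because $f_{\str}$ is nonnegative and $1$-bounded, the product is $f_{\str}(x)^{|\CL|+1}+O(\eps)$. Averaging against $\mu$ and applying H\"older, $\E f_{\str}^{|\CL|+1}\ge (\E f_{\str})^{|\CL|+1}=\alpha^{|\CL|+1}$, which gives a lower bound of $\alpha^{|\CL|+1}-O(\eps)$.

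\emph{Terms with an $f_{\sml}$ but no $f_{\psr}$.} Bound every other factor by $1$ and compute $\E_{\ve{y}}\mu(\ve{y})\E_x|f_{\sml}(x+L(\ve{y}))|$. If $L=0$ this is $\E|f_{\sml}|\le\eps$ directly. If $L\neq 0$, substitute $x\mapsto x-L(\ve{y})$ inside the $x$-average; this again gives $\E|f_{\sml}|\le \|f_{\sml}\|_2\le\eps$, since $\mu$ has mean $1$ and the $x$-average does not depend on $\ve{y}$ after the shift. So all such terms are $O(\eps)$.

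\emph{Terms with at least one $f_{\psr}$.} This is the main step, and the only place the weight $\mu$ and the complexity-$1$ hypothesis matter. Expand $\mu$ in Fourier series: since $\mu(\ve{y})=\prod_r w(y_r)$, we have $\mu(\ve{y})=\sum_{\ve{\xi}\in\F_p^m}\wh{\mu}(\ve{\xi})e_p(\ve{\xi}\cdot\ve{y})$ with $\sum_{\ve{\xi}}|\wh\mu(\ve{\xi})|=(\sum_\xi|\wh w(\xi)|)^m=O_{|\Gamma|,\rho}(1)$. Each Fourier mode is a twisted average, which Proposition \ref{prop:twisted-gvn} bounds by $\ll \|f_{\psr}\|_{U^2}^c\le \|\wh{f_{\psr}}\|_\infty^{c/2}\le \eta(|\Gamma|+\rho^{-1})^{-c/2}$. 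Summing over $\ve\xi$, the whole term is $O_{|\Gamma|,\rho}(1)\cdot\eta(|\Gamma|+\rho^{-1})^{-c/2}$. We now choose the growth function $\eta$ so fast that this is at most $\eps$. This is legitimate because $\eta$ is chosen before the regularity lemma is applied, and the constants $O_{|\Gamma|,\rho}(1)$ depend only on $|\Gamma|$ and $\rho$ (through $m$ and $\CL$). The point to check carefully is that the bounds on $\|w\|_\infty$ and $\sum|\wh w|$ really are functions of $|\Gamma|+\rho^{-1}$ alone. This holds because $|B_0|\ge(\rho/2)^{|\Gamma|}p$, so $\sum_\xi|\wh w(\xi)| = (p/|B_0|)^2\sum_\xi|\wh{1_{B_0}}(\xi)|^2 = p/|B_0| \le (2/\rho)^{|\Gamma|}$ by Parseval.

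Adding the three cases gives $\E_{\ve{y}}\mu(\ve{y})I_\CL(1_A;\ve{y})\ge\alpha^{|\CL|+1}-O(\eps)$.

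\emph{The counting consequence.} Let $S$ be the set of $\ve{y}\in B^m$ with $I_\CL(1_A;\ve{y})\ge\alpha^{|\CL|+1}-C\eps$ for a large constant $C$. Since $0\le I_\CL\le 1$, $\mu\ge 0$, $\E\mu=1$ and $\mu$ is supported on $B^m$,
$$
\alpha^{|\CL|+1}-O(\eps)\le \E_{\ve{y}}\mu I_\CL \le \alpha^{|\CL|+1}-C\eps+\|\mu\|_\infty|S|/p^m .
$$
Choosing $C$ larger than the implied constant gives $|S|\gg \eps p^m/\|\mu\|_\infty$. Since $\|\mu\|_\infty=\|w\|_\infty^m=O_{|\Gamma|,\rho}(1)$, this is $\gg_{|\Gamma|,\rho}\eps p^m$, as required.
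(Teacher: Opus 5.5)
Your proposal is correct and follows the paper's proof essentially step for step: the same three-way split of the $3^{|\CL|+1}$ terms, the Fourier expansion of $\mu$ combined with the twisted generalized von Neumann theorem (Proposition \ref{prop:twisted-gvn}) for the pseudorandom terms, and the same averaging argument using $\|\mu\|_\infty$ for the counting consequence. You also make explicit some bookkeeping the paper leaves implicit, such as $\sum_\xi|\wh w(\xi)| = p/|B_0| \le (2/\rho)^{|\Gamma|}$ via Parseval and the conversion $\|f_{\psr}\|_{U^2}^c \le \|\wh{f_{\psr}}\|_\infty^{c/2}$.
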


\begin{proof}
Using the decomposition from the regularity lemma, we can write the left-hand side as a sum of terms of the form
$$
\E_{x,\ve{y}} \mu(\ve{y}) \prod_{L \in \CL\cup\{0\}} f_L(x+L(\ve{y})),
$$
where each $f_{L}$ is one of $f_{\str}, f_{\psr}, f_{\sml}$. First consider the term with $f_L = f_{\str}$. As in the proof of Proposition \ref{prop:linear-0}, we have
$$
f_{\str}(x + L(\ve{y})) = f_{\str}(x) + O(\eps)
$$
for every $L \in \CL$, $x \in \F_p$, and $\ve{y}$ in the support of $\mu$. It follows that
$$
\E_{x,\ve{y}} \mu(\ve{y}) \prod_{L \in \CL\cup\{0\}} f_{\str}(x+L(\ve{y})) \geq \E_{x,\ve{y}} \mu(\ve{y})f_{\str}(x)^{|\CL|+1} - O(\eps) \geq \alpha^{|\CL|+1} - O(\eps).
$$
Now consider the terms with $f_L =f_{\sml}$ for some $L$. In this case, by the triangle inequality, we have
$$
\Big|\E_{x,\ve{y}} \mu(\ve{y}) \prod_{L \in \CL\cup\{0\}} f_L(x+L(\ve{y}))\Big| \leq \E_{x,\ve{y}} \mu(\ve{y}) |f_{\sml}(x+L(\ve{y}))|\leq \eps.
$$
Finally, consider the terms with $f_L = f_{\psr}$ for some $L$. By Fourier expansion, we can write
$$
\E_{x,\ve{y}} \mu(\ve{y}) \prod_{L \in \CL\cup\{0\}} f_L(x+L(\ve{y})) = \sum_{\ve{\xi}\in\F_p^m} \wh{\mu}(\ve{\xi}) \E_{x,\ve{y}} e_p(\xi\cdot \ve{y})\prod_{L \in \CL\cup\{0\}} f_L(x+L(\ve{y})).
$$
Since $\CL$ has algebraic complexity at most $1$, by a twisted form of the generalized von Neumann theorem (Proposition \ref{prop:twisted-gvn}) applied to the inner average over $x,\ve{y}$, we can bound the expression above by
$$
\Big(\sum_{\ve{\xi}\in\F_p^m} |\wh{\mu}(\ve{\xi})|\Big) \|\wh{f_{\psr}}\|_{\infty}^c  \ll_{|\Gamma|,\rho} \eta(|\Gamma|+\rho^{-1})^{-c}
$$
for some constant $c>0$. This can be made at most $\eps$ by choosing $\eta$ to grow fast enough. This shows that
$$
\E_{\ve{y}} \mu(\ve{y}) I_\CL(1_A;\ve{y}) \geq \alpha^{|\CL|+1} - O(\eps).
$$
Let $D$ be the set of $\ve{y} \in B^m$ such that $I_{\CL}(1_A;\ve{y}) \geq \alpha^{|\CL|+1}-C\eps$ for some large enough constant $C$. Then we have
$$
\alpha^{|\CL|+1}-O(\eps) \leq \E_{\ve{y}} \mu(\ve{y}) (\alpha^{|\CL|+1}-C\eps) + \E_{\ve{y}} \mu(\ve{y})1_{\ve{y}\in D}.
$$
It follows that
$$
\sum_{\ve{y} \in D} \mu(\ve{y}) \geq \eps p^m
$$
once $C$ is chosen large enough. Since $\|\mu\|_{\infty}\ll_{|\Gamma|,\rho}1$, we have $|D| \gg_{|\Gamma|,\rho} \eps p^m$ as desired.
\end{proof}

\subsection{Proof of Proposition \ref{Main-prop}}

Under the assumption that $\Span(\CL_1)\oplus\cdots\oplus\Span(\CL_\ell)$ is a direct sum, we may assume after a linear change of variables and clearing denominators that the linear forms in each $\CL_i$ only depend on the variables $y_r$ with $r \in \pi_i$, where $\pi_1,\cdots,\pi_{\ell}$ are pairwise disjoint subsets of $[m]$. By Proposition \ref{prop:linear-1} applied to each $\CL_i$, we can find $\gg_{|\Gamma|,\rho}\eps p^m$ values of $\ve{y}_i \in B^m$ such that
$$
I_{\CL_i}(1_A;\ve{y}_i) \geq \alpha^{|\CL_i|+1} - O(\eps).
$$
Since the linear forms in $\CL_i$ depend only on the variables $y_r$ with $r \in \pi_i$, the value $I_{\CL_i}(1_A;\ve{y}_i)$ depends only on the $r$-th component of $\ve{y}_i$ with $r \in \pi_i$. Since $\pi_1,\cdots,\pi_{\ell}$ are pairwise disjoint, we can find $\gg_{|\Gamma|,\rho} \eps^{\ell}p^m$ values of $\ve{y} \in B^m$ such that
$$
I_{\CL_i}(1_A;\ve{y}) \geq \alpha^{|\CL_i|+1} - O(\eps)
$$
for every $i$. Combining this with Proposition \ref{prop:linear-0} completes the proof, once $\eta$ is chosen to grow fast enough.

\subsection{Proof of Proposition \ref{Main2-prop}}

By Proposition \ref{prop:linear-1} we have
$$
I_{\CL}(1_A;y) \geq \alpha^{|\CL|+1} - O(\eps)
$$
for $\gg_{\eps}p$ values of $y \in \F_p$. Let $D$ be this set of popular differences $y$ for $\CL$. For a vector $\ve{y} = (y_1,\cdots,y_t) \in D^t$ for some $1 \leq t \leq m$, we say that $\ve{y}$ is $t$-\emph{good} if 
$$
I_{\CL_{[t]}}(1_A;\ve{y}) \geq \alpha^{t|\CL|+1} - C\eps,
$$
where $C$ is a sufficiently large constant.
Otherwise we say that $\ve{y}$ is $t$-\emph{bad}. Thus our goal is to find $\ve{y} = (y_1,\cdots,y_m) \in D^{m}$ such that $\ve{y}_I = (y_i)_{i \in I}$ is $|I|$-good for every nonempty $I \subset [m]$.

Let $R_{m} < R_{m-1} < \cdots < R_2 < R_1$ be a rapidly increasing sequence of large constants in terms of $\eps$. Pick an arbitrary subset $D_1 \subset D$ with $|D_1|=R_1$. First consider the complete graph whose vertex set is $D_1$, and each edge $\ve{y}=(y_1,y_2)$ is colored red or blue depending on whether $\ve{y}$ is $2$-good or not. Since $|D_1|=R_1$ is sufficiently large in terms of $R_2$, Ramsey theory implies that we can find a subset $D_2 \subset D_1$ with $|D_2| = R_2$, such that all edges in $D_2$ are monochromatic. Next consider the complete $3$-uniform hypergraph whose vertex set is $D_2$, and each $3$-edge $\ve{y}=(y_1,y_2,y_3)$ is colored red or blue depending on whether $\ve{y}$ is $3$-good or not. Since $|D_2|=R_2$ is sufficiently large in terms of $R_3$, Ramsey theory implies that we can find a subset $D_3 \subset D_2$ with $|D_3|=R_3$, such that all the $3$-edges in $D_3$ are monochromatic. Continue in this fashion to produce $D_4,\cdots,D_{m}$, with $|D_t|=R_t$ for all $2 \leq t \leq m$, such that all the $t$-edges in the $t$-uniform complete hypergraph with vertex set $D_t$ are monochromatic.

For each $2 \leq t \leq m$, let $\ve{y} = (y_1,\cdots,y_t)$ be a $t$-edge with distinct coordinates $y_1,\cdots,y_t \in D_t$. By the construction above, either all such $\ve{y}$ are $t$-good or all are $t$-bad. We prove that they must all be good. In fact, consider the average
$$
\E_{\ve{y} \in D_t^t}  I_{\CL_{[t]}}(1_A;\ve{y}) = \E_{x} 1_A(x) \Big(\E_{y \in D_t} \prod_{L \in \CL} 1_A(x+L(y))\Big)^t.
$$
Since $I_{\CL}(1_A;y) \geq \alpha^{|\CL|+1}-O(\eps)$ for $y \in D_t$, we have
$$
\begin{aligned}
\alpha^{|\CL|+1} - O(\eps) &\leq \E_x 1_A(x) \E_{y \in D_t}\prod_{L \in\CL} 1_A(x+L(y)) \\
&\leq \Big(\E_x 1_A(x)\Big)^{(t-1)/t} \Big[\E_x 1_A(x) \Big(\E_{y \in D_t} \prod_{L \in \CL} 1_A(x+L(y))\Big)^t \Big]^{1/t}
\end{aligned}
$$
by H\"{o}lder's inequality. It follows that 
$$
\E_{\ve{y} \in D_t^t}I_{\CL_{[t]}}(1_A;\ve{y}) \geq \alpha^{t|\CL|+1}-O(\eps).
$$
If all $\ve{y} \in D_t^t$ with distinct coordinates are $t$-bad, then their contribution to the above average is at most $\alpha^{t|\CL|+1}-C\eps$. On the other hand, the contribution from those $\ve{y} \in D_t^t$ with at least two equal coordinates is at most $t^2/|D_t| \leq \eps$ since $|D_t|$ is large. This leads to a contradiction once $C$ is large enough.

It follows that any $\ve{y} = (y_1,\cdots,y_m)$ with distinct $y_1,\cdots,y_m \in D_m$ is a popular difference for $\CL_I$ for any nonempty $I \subset [m]$. Indeed, if $I=\{i\}$ is a singleton then $y_i \in D$ and hence $I_{\CL}(1_A;y_i) \geq \alpha^{|\CL|+1}-O(\eps)$. If $t=|I| \geq 2$, then $\ve{y}_I \in D_m^t$ and so $\ve{y}_I \in D_t^t$ is $t$-good, and hence $I_{\CL_I}(1_A;\ve{y}) \geq \alpha^{t|\CL|+1}-C\eps$.

In this manner we produce one simultaneous popular difference $\ve{y} \in D_1^m$ for every $D_1 \subset D$ with $|D_1| = R_1$. By double counting $(\ve{y}, D_1)$, where $\ve{y}$ is a simultaneous popular difference, $D_1 \subset D$ with $|D_1|=R_1$, and $\ve{y} \in D_1^m$, we conclude that the total number of such $\ve{y}$ is at least
$$
\binom{|D|}{R_1} / |D|^{R_1-m} \gg_{\eps} |D|^m \gg_{\eps} p^m,
$$
as desired.

\subsection{Proof of Proposition \ref{Main3-prop}}

We start with the first pair $\CL_1=\{y,2y\}$ and $\CL_2 = \{y,2y,z\}$.
By Proposition \ref{prop:linear-1}, there are $\gg_{\eps}p$ values of $y \in \F_p$ such that $I_{\CL_1}(1_A;y) \geq \alpha^3-O(\eps)$. For such $y$ we have
$$
\E_{z \in \F_p} I_{\CL_2}(1_A;y,z) = \E_x 1_A(x) 1_A(x+y) 1_A(x+2y) \E_z 1_A(x+z) = \alpha I_{\CL_1}(1_A;y) \geq \alpha^4-O(\eps).
$$
Hence for each such $y$ there are at least $\eps$ proportion of $z \in \F_p$ such that $I_{\CL_2}(1_A;y,z) \geq \alpha^4-O(\eps)$. This completes the proof.

Now consider the second pair $\CL_1=\{z,2z\}$ and $\CL_2=\{y,z,y+z\}$. By Proposition \ref{Main-prop} applied to the two linear patterns $\CL_1 = \{z,2z\}$ and $\CR = \{z\}$, we obtain $\gg_{\eps}p$ values of $z\in\F_p$ satisfying
$$
I_{\CL_1}(1_A;z) \geq \alpha^3 - \eps \ \ \text{and}\ \  I_{\CR}(1_A;z) = \E_x 1_A(x) 1_A(x+z) \geq \alpha^2 - \eps.
$$
For such $z$ we have
$$
\begin{aligned}
\E_y I_{\CL_2}(1_A;y,z) &= \E_{x,y} 1_A(x) 1_A(x+y)1_A(x+z) 1_A(x+y+z) \\
&= \E_{x,u} 1_A(x) 1_A(x+z) 1_A(u) 1_A(u+z) = I_{\CR}(1_A;z)^2 \geq \alpha^4-O(\eps).
\end{aligned}
$$
where $u = x+y$. Hence for each such $z$ there are at least $\eps$ proportion of $y \in \F_p$ such that $I_{\CL_2}(1_A;y,z) \geq \alpha^4 - O(\eps)$. This completes the proof.

\section{Constructions for $(x, x+y, x+2y)$ and $(x,x+y^2)$}\label{sec:nspd1}

In this section we prove Theorem \ref{counterex}, which will be deduced from the following proposition.

\begin{prop}\label{nspd1}
Let $\CP_1 = \{y, 2y\}$ and $\CP_2 = \{y^2\}$. For sufficiently large prime $p$, there exists a function $f: \F_p\to [0,1]$ with $\E f = 1/2 + o(1)$, such that
$$
\min\Big\{I_{\CP_1}(f;y) - \frac{1}{8}, I_{\CP_2}(f;y) - \frac{1}{4}\Big\} \leq -c
$$
for some absolute constant $c>0$ and all but $O(1)$ values of $y \in \F_p$.
\end{prop}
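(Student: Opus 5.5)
The plan is a Fourier construction of the form announced in the proof outline. Take
$$
f(x)=\frac12+g(x),\qquad g(x)=\sum_{(a,b)\in S} c_{a,b}\,e_p(ax+bx^2),
$$
where $S\subset\Z^2\setminus\{(0,0)\}$ is a fixed finite symmetric set with $c_{-a,-b}=\overline{c_{a,b}}$. The coefficients are small, so $g$ is real and $|g|\le 1/2$, hence $f$ takes values in $[0,1]$. Weyl/Gauss sum bounds give $\E_x e_p(ax+bx^2)=O(p^{-1/2})$ for $(a,b)\neq(0,0)$ mod $p$, so $\E f=1/2+o(1)$. Expand $I_{\CP_1}(f;y)$ and $I_{\CP_2}(f;y)$ multilinearly in $g$. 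The linear terms are $o(1)$. For each remaining product of exponentials, average over $x$: the $x^2$- and $x$-coefficients of the resulting phase must vanish mod $p$, otherwise the term is $O(p^{-1/2})$. Whenever the vanishing condition involves $y$ nontrivially, it holds for only $O(1)$ values of $y$; these values are discarded, which is why the statement allows $O(1)$ exceptions.

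\textbf{Resulting formulas.} For generic $y$, write $\theta=y/p$ and $\phi=y^2/p$ modulo $1$.
\begin{enumerate}
\item In $I_{\CP_2}(f;y)=\tfrac14+\E_x g(x)g(x+y^2)+o(1)$, only pairs with $b=b'=0$ and $a'=-a$ survive. This gives $I_{\CP_2}-\tfrac14=\sum_a|c_{a,0}|^2 e(a\phi)+o(1)$.
\item In $I_{\CP_1}(f;y)-\tfrac18$, the quadratic terms survive only for $b=0$. They contribute a trigonometric polynomial $H(\theta)$ of size $\sum_a|c_{a,0}|^2$.
\item The cubic terms $\E_x g(x)g(x+y)g(x+2y)$ survive for generic $y$ exactly when $(b_1,b_2,b_3)=(t,-2t,t)$ and $(a_1,a_2,a_3)$ satisfies $a_1+a_2+a_3=0$ and $a_2+2a_3=0$, i.e. $(a_1,a_2,a_3)=(s,-2s,s)$. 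The surviving phase is then $e(y(a_2+2a_3)/p+y^2(b_2+4b_3)/p)=e(2t\phi)$. So the cubic part is a pure function $K(\phi)=\sum_t C_t e(2t\phi)$ with $C_t=\sum_s c_{s,t}^2c_{-2s,-2t}$.
\end{enumerate}

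\textbf{Choice of coefficients.} Use two scales. Put amplitude $\eps$ on the $b=0$ coefficients and amplitude $\delta$ on the $b\neq0$ ones, with $\eps^2\ll\delta^3\ll1$. Then
$$
F_2:=I_{\CP_2}-\tfrac14=\eps^2u(\phi),\qquad F_1:=I_{\CP_1}-\tfrac18=\delta^3K(\phi)+O(\eps^2)+o(1).
$$
Here $u$ is a cosine series with nonnegative coefficients and mean zero. Choose $u(\phi)=\Phi_N(2\phi)-1$, where $\Phi_N$ is a Fej\'er kernel of fixed order $N$, realized by $|c_{\pm 2j,0}|^2\propto 1-j/N$. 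This $u$ is close to $-1$ except on small neighbourhoods of $\phi\in\{0,\tfrac12\}$. Then arrange $K(\phi)=-\lambda(\Phi_N(2\phi)-1)$ with $\lambda>0$, which is strongly negative exactly on those neighbourhoods. To realize the prescribed $C_t$ for each frequency $t\ne0$, use a single tagged triple $c_{s_t,t},c_{-2s_t,-2t}$ with $x$-frequencies $s_t$ chosen lacunary and distinct from all $b=0$ frequencies. This ensures that no unintended triple or pair satisfies the generic vanishing conditions. Consequently, wherever $u(\phi)\ge-1/2$ we get $F_1\le-\lambda'\delta^3+O(\eps^2)\le -c$, and elsewhere $F_2\le-\eps^2/2$. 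Taking $c=\eps^2/2$ gives the proposition.

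\textbf{Main obstacle.} The hardest step is the combinatorial bookkeeping of frequencies. I must check that the only generically surviving cubic terms are the intended tagged triples, with no cross terms between different $t$ and no $b=0$/$b\ne0$ mixing. I also need $C_t$ to be consistent with $K$ being real, that is $C_{-t}=\overline{C_t}$, which follows from the symmetry of $S$. Finally, I must check that each nongeneric coincidence removes only $O(1)$ values of $y$. Theorem \ref{counterex} should then follow by a standard random rounding of $f$ to a set $A$, with $\alpha$ close to $1/2$.
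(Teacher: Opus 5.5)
\textbf{There is a genuine gap in your final step: the choice $K=-\lambda u$.}

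Your architecture is the paper's. A small linear-phase part makes $I_{\CP_2}-\frac14$ a positive-definite function of $\phi=y^2/p$. A larger quadratic-phase part, through the surviving cubic terms $(b_1,b_2,b_3)=(t,-2t,t)$, makes the main term of $I_{\CP_1}-\frac18$ a function of $\phi$. Your scaling $\eps^2\ll\delta^3$ matches the paper's $\eps^4\ll\eps^3$.

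With $K=-\lambda u$, the leading terms $\eps^2u(\phi)$ and $-\lambda\delta^3u(\phi)$ are negative multiples of each other. So on $\{-\frac12\le u\le 0\}$ you have $K\ge 0$, and your claim that $F_1\le-\lambda'\delta^3+O(\eps^2)$ whenever $u(\phi)\ge-\frac12$ is false. Since $u$ is continuous with $u(0)=N-1>0$ and $u\approx-1$ away from $\{0,\frac12\}$, it has zeros $\phi_0$, where both leading terms vanish. At such points $F_2\approx0$. What remains of $F_1$ is the quadratic part $\eps^2[u(\theta)+\frac12u(2\theta)]$, with $\theta=y/p$, plus $O_N(\eps^3)$, and this is positive for $\theta$ near $0$. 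Since $(y/p,y^2/p)$ equidistributes in $\T^2$, for every fixed $c>0$ there are $\gg_c p$ values of $y$ with $\min\{F_1,F_2\}>-c$. So your $f$ does not satisfy the proposition. No rescaling or change of threshold helps: you need $\{u\ge-\gamma\}$ and $\{K\ge-\gamma\}$ to be disjoint for some $\gamma>0$, and a negative multiple of $u$ never achieves that.

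\textbf{Repair within your framework.} Take $K(\phi)=\Phi_N(2\phi+\frac12)-1=\sum_{0<|t|<N}(-1)^t(1-|t|/N)e(2t\phi)$. Its coefficients are real and even, so your tagged triples can realize them. For $N$ large, $\{u\ge-\frac12\}$ and $\{K\ge-\frac12\}$ lie in disjoint neighbourhoods of $\{0,\frac12\}$ and $\{\frac14,\frac34\}$, and your case split then goes through with $c=\eps^2/2$. This shifted-Fej\'er device is what the paper uses for the linear patterns in Lemma~\ref{min}.

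\textbf{The paper's route is leaner.} The linear part is the single cosine $\eps^2\cos(4\pi x/p)$, so $I_{\CP_2}-\frac14\approx\frac{\eps^4}{2}v$ with $v=\cos(4\pi\phi)$. The quadratic part depends on $x^2$ only, so there are no mixed frequencies and no tagging. Its coefficients are $c_{\pm1}=c_{\pm4}=\eps$ and $c_{\pm2}=-\frac{\eps}{2}$. The cubic part is then $\eps^3(-v+\frac12(2v^2-1))=\eps^3((v-\frac12)^2-\frac34)$, which is at most $-\frac{3}{16}\eps^3$ whenever $v\ge-\frac14$. The extra harmonic $\cos(8\pi\phi)$ is what makes the negativity region of $F_1$ overlap that of $F_2$.

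\textbf{A minor slip in your item 3.} The $x$-average forces only $a_1+a_2+a_3=0$, not $a_2+2a_3=0$, because the term $(a_2+2a_3)y$ is constant in $x$. With lacunary tags, the surviving $b\ne0$ triples are permutations of $\pm(s,s,-2s)$, and only $(s_t,-2s_t,s_t)$ fits the pattern $(t,-2t,t)$. So your conclusion holds anyway. The all-$b=0$ cubic terms can depend on $\theta$, but they are $O_N(\eps^3)$ and harmless.
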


In Section \ref{sec51} we give the construction in Proposition \ref{nspd1} using a trigonometric polynomial. An unfortunate side effect of the construction is that there are $O(1)$ exceptional values of $y$ which have to be ruled out. In Section \ref{sec52} we deduce Theorem \ref{counterex} from Proposition \ref{nspd1} via a modification of the standard probabilistic argument which handles these exceptional $y$.

\subsection{Proof of Proposition \ref{nspd1}}\label{sec51}
Let $\eps > 0$ be a sufficiently small constant.
Define $h,q,f: \F_p\rightarrow\R$ by $f(x) = h(x) + q(x)$, and
$$
h(x) = \frac{\eps^2}{2}(e_p(2x) + e_p(-2x)) = \eps^2 \cos\Big(\frac{4\pi x}{p}\Big), \ \ q(x) = \sum_{n \in \Z}c_n e_p(nx^2),
$$
where
$$
c_0 = \frac{1}{2}, \ \ c_{\pm 1} = c_{\pm 4} = \eps, \ \ c_{\pm 2} = -\frac{\eps}{2},
$$
and all other $c_n=0$. Thus $f$ takes values in $[0,1]$ once $\eps$ is small enough, and $\E f = 1/2+o(1)$ by the Gauss sum estimate.

\begin{lmm}\label{lem51}
Let the notations and assumptions be as above. Then we have
$$
I_{\CP_1}(f; y) =  \frac{1}{8} + \eps^3\Big[-\cos\Big(\frac{4\pi y^2}{p}\Big) + \frac{1}{2} \cos\Big(\frac{8\pi y^2}{p}\Big)\Big] + O(\eps^4 + p^{-1/2})
$$
for all but $O(1)$ values of $y \in \F_p$.
\end{lmm}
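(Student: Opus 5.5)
We expand $I_{\CP_1}(f;y)=\E_x f(x)f(x+y)f(x+2y)$ by writing $f=\frac12+g$, where
$$
g=h+q', \qquad q'(x)=\sum_{n\neq 0}c_ne_p(nx^2).
$$
The product $\prod_{j=0}^2 f(x+jy)$ then splits by degree in $g$. The constant term is $\frac18$. Each degree-one term is $\frac14\E g$, which is $O(p^{-1/2})$: $\E h=0$ exactly, and $\E q'=O(p^{-1/2})$ by Gauss sums. The degree-two and degree-three terms are expanded into exponential sums of the shape
$$
\E_x e_p\Big(\sum_j \big(n_j(x+jy)^2+m_j(x+jy)\big)\Big),
$$
with $n_j\in\{0,\pm1,\pm2,\pm4\}$ and $m_j\in\{0,\pm2\}$ recording which Fourier mode of $g$ sits at position $j$.

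The key step is evaluating these sums. The phase is a quadratic polynomial in $x$ with leading coefficient $N=\sum_j n_j$.
\begin{itemize}
\item If $N\neq0$ (as an integer, hence mod $p$ for large $p$), the Gauss sum bound gives $O(p^{-1/2})$.
\item If $N=0$, the phase becomes linear in $x$, with coefficient $2y\sum_j jn_j+\sum_j m_j$. The average is nonzero (and then unimodular) only when this coefficient vanishes mod $p$.
\end{itemize}
If $\sum_j jn_j\neq0$, this happens for at most one $y$. Since there are only $O(1)$ tuples $(n_j,m_j)$, we exclude $O(1)$ values of $y$ in total, which accounts for the exceptional set.

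It remains to handle tuples with $N=0$ and $\sum_j jn_j=0$, which forces $\sum_j m_j=0$ as well. For these, the surviving term equals $e_p(y^2\sum_j j^2n_j)$.
\begin{itemize}
\item Degree two (two positions $i<j$ carrying nonzero $g$-modes): the conditions force $n_i=n_j=0$, so both modes come from $h$ with $m_i+m_j=0$ and $im_i+jm_j=0$. That makes $m_i=m_j=0$, contradicting the nonzero frequencies $\pm2$ of $h$. So there are no degree-two contributions, up to excluded $y$ and $O(p^{-1/2})$ errors.
\item Degree three: the coefficient size is $\eps^3$ when all three modes come from $q'$, and $O(\eps^4)$ whenever at least one comes from $h$ (since $h$ has size $\eps^2$). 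So up to $O(\eps^4)$ we need $n_0+n_1+n_2=0$ and $n_1+2n_2=0$. This gives $(n_0,n_1,n_2)=(n,-2n,n)$, with surviving term $e_p(y^2(n-8n+4n))=e_p(-3ny^2)$...
\end{itemize}

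Let me redo that exponent, since $\sum_j j^2n_j = 0\cdot n+1\cdot(-2n)+4n=2n$. The term is therefore $c_nc_{-2n}c_n\,e_p(2ny^2)$, summed over $n$ with $n,-2n$ both in the support $\{\pm1,\pm2,\pm4\}$, i.e. $n\in\{\pm1,\pm2\}$:
\begin{itemize}
\item $n=\pm1$ gives $\eps\cdot(-\tfrac{\eps}{2})\cdot\eps=-\tfrac{\eps^3}{2}$;
\item $n=\pm2$ gives $(-\tfrac{\eps}{2})\cdot\eps\cdot(-\tfrac{\eps}{2})=\tfrac{\eps^3}{4}$.
\end{itemize}
Summing the $\pm$ pairs yields $-\eps^3\cos(4\pi y^2/p)+\tfrac{\eps^3}{2}\cos(8\pi y^2/p)$, as claimed.

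The main obstacle is the bookkeeping that every other term is either $O(p^{-1/2})$, lies in the $O(1)$ exceptional set of $y$, or is $O(\eps^4)$. This includes checking carefully the mixed degree-three terms involving $h$ and the fact that no degree-two term survives generically.
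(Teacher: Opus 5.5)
Your route is essentially the paper's. You Fourier-expand, discard tuples with nonzero quadratic coefficient by the Gauss sum bound, exclude the $O(1)$ values of $y$ at which a linear coefficient $2y\sum_j jn_j+\sum_j m_j$ with $\sum_j jn_j\neq0$ vanishes, and read off the main term from $(n_0,n_1,n_2)=(n,-2n,n)$ with $n\in\{\pm1,\pm2\}$. Your final evaluation of that main term is correct. Splitting $f=\frac12+g$ instead of the paper's $f=h+q$ is only cosmetic.

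One step is false, though. When $N=0$, the constant phase that survives is $y^2\sum_j j^2n_j+y\sum_j jm_j$, not $y^2\sum_j j^2n_j$, and there is no constraint $im_i+jm_j=0$. So in degree two, two $h$-modes with $m_i=-m_j=\pm2$ produce the unimodular factor $e_p(\pm2(i-j)y)$. For instance, $\frac12\E_x h(x)h(x+y)=\frac{\eps^4}{4}\cos(4\pi y/p)$, which is neither $O(p^{-1/2})$ nor confined to $O(1)$ values of $y$. Your claim that degree two contributes nothing is therefore wrong.

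The repair is immediate. Each such term has absolute value at most $\frac12\|h\|_\infty^2\le\eps^4$, so it is absorbed into the $O(\eps^4)$ error. This is exactly how the paper disposes of every term containing at least two copies of $h$. In the all-$q'$ degree-three terms the omitted $y\sum_j jm_j$ is zero, since all $m_j=0$, so your main term is unaffected. With this correction the argument is complete.
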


\begin{proof}
We can split $I_{\CP_1}(f;y)$ into eight terms of the form
$$
T(f_0, f_1, f_2) := \E_{x \in \F_p} f_0(x) f_1(x+y) f_2(x+2y),
$$
where $f_0,f_1,f_2 \in \{h,q\}$. If there are at least two copies of $h$ among $f_0,f_1,f_2$, then $|T(f_0,f_1,f_2)| \leq \eps^4$ since $\|h\|_{\infty} \leq \eps^2$. If exactly one of $f_0,f_1,f_2$ is $h$, say $f_0=h$, then
$$
T(h,q,q) = \frac{\eps^2}{2} \sum_{n_1,n_2 \in \Z} c_{n_1}c_{n_2}\E_{x \in \F_p}  e_p(n_1(x+y)^2 + n_2(x+2y)^2) (e_p(2x) + e_p(-2x)).
$$
If $n_1 +n_2 \neq 0$, then the inner average over $x$ is $O(p^{-1/2})$. Hence
$$
T(h,q,q) = \frac{\eps^2}{2} \sum_{n \in \Z} c_n^2 \E_{x \in \F_p} e_p(n(2xy+3y^2)) (e_p(2x) + e_p(-2x)) + O(p^{-1/2}).
$$
The inner average over $x$ vanishes unless $ny+1=0$ or $ny-1=0$. Hence $T(h,q,q) = O(p^{-1/2})$ for all but $O(1)$ values of $y$. A similar argument applies to $T(q,h,q)$ and $T(q,q,h)$.

It remains to compute the main term $T(q,q,q)$. We have
$$
T(q,q,q) = \sum_{n_0,n_1,n_2 \in \Z} c_{n_0}c_{n_1}c_{n_2} \E_{x \in \F_p} e_p(n_0x^2 + n_1(x+y)^2 + n_2(x+2y)^2).
$$
The inner average over $x$ is $O(p^{-1/2})$ unless $n_0+n_1+n_2=0$ and $(2n_1+4n_2)y = 0$. For $y \neq 0$, this happens only if $(n_0,n_1,n_2) = (n, -2n, n)$ for $n \in \{0,\pm 1,\pm 2\}$. Hence
$$
\begin{aligned}
T(q,q,q) &= \sum_{n \in \{0,\pm 1,\pm 2\}} c_n^2c_{-2n} e_p(2ny^2) + O(p^{-1/2}) \\
&= \frac{1}{8} + \eps^3\Big[-\cos\Big(\frac{4\pi y^2}{p}\Big) + \frac{1}{2} \cos\Big(\frac{8\pi y^2}{p}\Big)\Big] + O(p^{-1/2}).
\end{aligned}
$$
This completes the proof of the lemma.
\end{proof}

\begin{lmm}\label{lem52}
Let the notations and assumptions be as above. Then we have
$$
I_{\CP_2}(f; y) =  \frac{1}{4} + \frac{\eps^4}{2} \cos\Big(\frac{4\pi y^2}{p}\Big) + O(p^{-1/2})
$$
for all $y \in \F_p\setminus\{0\}$.
\end{lmm}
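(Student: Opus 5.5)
We expand $I_{\CP_2}(f;y)=\E_x f(x)f(x+y^2)$ with $f=h+q$ into four terms: $T(h,h)$, $T(h,q)$, $T(q,h)$ and $T(q,q)$, where $T(f_0,f_1)=\E_x f_0(x)f_1(x+y^2)$. Each term is then evaluated by orthogonality of additive characters in $x$, together with the Gauss sum bound $\E_x e_p(ax^2+bx)=O(p^{-1/2})$ for $a\neq 0$. Unlike Lemma \ref{lem51}, no exceptional $y$ should arise, because the phases that appear involve $x^2$ coefficients only through $n_0+n_1$ and linear coefficients through $y^2$, and these cannot conspire.

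\textbf{The main term $T(q,q)$.} We have
$$
T(q,q)=\sum_{n_0,n_1}c_{n_0}c_{n_1}\,\E_x e_p\big(n_0x^2+n_1(x+y^2)^2\big).
$$
Whenever $n_0+n_1\neq 0$, the $x$-coefficient of $x^2$ is nonzero mod $p$ (since $|n_i|\le 4$ and $p$ is large), so the Gauss sum gives $O(p^{-1/2})$. When $n_1=-n_0=n$, the phase is $2nxy^2+ny^4$. Since $y\neq 0$, the term $2ny^2$ is nonzero mod $p$ for $n\neq 0$, so the average vanishes unless $n=0$. Hence $T(q,q)=c_0^2+O(p^{-1/2})=\tfrac14+O(p^{-1/2})$.

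\textbf{The mixed terms $T(h,q)$ and $T(q,h)$.} Here the $x^2$ coefficient is $n$ (from $c_n$ with $n\neq 0$) or the term is $n=0$. For $n\neq0$ the Gauss sum gives $O(p^{-1/2})$. For $n=0$ the term is $c_0\,\E_x h(x)=0$, since $h$ has only frequencies $\pm2\neq0$. So both mixed terms are $O(p^{-1/2})$.

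\textbf{The term $T(h,h)$.} Computing directly,
$$
T(h,h)=\frac{\eps^4}{4}\,\E_x\big(e_p(2x)+e_p(-2x)\big)\big(e_p(2x+2y^2)+e_p(-2x-2y^2)\big).
$$
Only the cross pairings survive, giving
$$
T(h,h)=\frac{\eps^4}{4}\big(e_p(-2y^2)+e_p(2y^2)\big)=\frac{\eps^4}{2}\cos\!\Big(\frac{4\pi y^2}{p}\Big).
$$

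Summing the four terms yields the lemma for every $y\neq0$. The only point needing care is confirming that no degenerate coincidence produces an exceptional $y$. This rests on two facts: $2ny^2\not\equiv0$ for $y\neq0$ and $0<|n|\le4<p$, and in the mixed terms the $x^2$ coefficient never cancels. This check is easy; there is no real obstacle.
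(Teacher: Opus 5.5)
Your proof is correct and follows essentially the same route as the paper. You split $I_{\CP_2}(f;y)$ into the four terms $T(f_0,f_1)$ with $f_0,f_1\in\{h,q\}$, dispose of the non-resonant frequencies via the Gauss sum bound and orthogonality (using $2ny^2\neq 0$ for $y\neq 0$), and compute $T(h,h)=\frac{\eps^4}{2}\cos(4\pi y^2/p)$ directly.
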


\begin{proof}
We can split $I_{\CP_2}(f;y)$ into four terms of the form
$$
T(f_0,f_1) := \E_{x \in \F_p} f_0(x) f_1(x+y^2),
$$
where $f_0,f_1 \in \{h,q\}$. For $f_0=f_1=q$, we have
$$
T(q,q) = \sum_{n_0,n_1 \in \Z} c_{n_0}c_{n_1} \E_{x \in \F_p} e_p(n_0x^2 +n_1(x+y^2)^2).
$$
The inner average is $O(p^{-1/2})$ unless $n_0+n_1=0$ and $2n_1y^2 = 0$. For $y \neq 0$, this happens only if $n_0=n_1=0$, and hence $T(q,q) = 1/4 + O(p^{-1/2})$.

For $f_0=h$ and $f_1=q$, we have
$$
T(h,q) = \frac{\eps^2}{2}\sum_{n \in \Z} c_n \E_{x \in \F_p} e_p(n(x+y^2)^2) (e_p(2x) + e_p(-2x)).
$$
The inner average over $x$ is $O(p^{-1/2})$ when $n \neq 0$, and vanishes when $n=0$. A similar argument applies to $T(q,h)$.

Finally, for $f_0=f_1=h$ we have
$$
\begin{aligned}
T(h,h) &= \frac{\eps^4}{4} \E_{x \in \F_p} (e_p(2x) + e_p(-2x)) (e_p(2x+2y^2) + e_p(-2x-2y^2)) \\
&= \frac{\eps^4}{4} (e_p(2y^2) + e_p(-2y^2)) = \frac{\eps^4}{2} \cos\Big(\frac{4\pi y^2}{p}\Big).
\end{aligned}
$$
This completes the proof of the lemma.
\end{proof}

The proof of Proposition \ref{nspd1} can now be completed as follows.
Let $u = \cos(4\pi y^2/p)$. If $u \leq -1/4$, then Lemma \ref{lem52} implies that
$$
I_{\CP_2}(f;y) \leq \frac{1}{4} - \frac{\eps^4}{8} + O(p^{-1/2})
$$
for $y \neq 0$. If $u \geq -1/4$, then
$$
-\cos\Big(\frac{4\pi y^2}{p}\Big) + \frac{1}{2} \cos\Big(\frac{8\pi y^2}{p}\Big) = -u + u^2 - \frac{1}{2} = \Big(u-\frac{1}{2}\Big)^2 - \frac{3}{4} \leq -\frac{3}{16}.
$$
Hence Lemma \ref{lem51} implies that
$$
I_{\CP_1}(f;y) \leq \frac{1}{8} -\frac{3}{16}\eps^3 + O(\eps^4 + p^{-1/2})
$$
for all but $O(1)$ values of $y$. The proof is completed by taking $\eps$ small enough.

\subsection{Deduction of Theorem \ref{counterex} from Proposition \ref{nspd1}}\label{sec52}

We summarize this step in the form of the following lemma, as it will be used again in Section \ref{sec:nspd3}. Clearly Theorem \ref{counterex} follows by combining Proposition \ref{nspd1} with Lemma \ref{lem:functions-to-sets}.

For a finite collection $\CP = \{P_1,\cdots,P_k\}$ of nonzero polynomials in $\Z_0[y]$, we say that $y \in \F_p$ is \emph{admissible} for $\CP$ if the $k+1$ values $0, P_1(y),\cdots,P_k(y)$ are all distinct. The expected value of $I_{\CP}(1_A; y)$ for a random subset $A\subset \F_p$ of density $\alpha$ is $\alpha^{|\CP|+1}$ only when $y$ is admissible for $\CP$. In the examples we consider in Theorems \ref{counterex}, \ref{counterex2}, and \ref{no-spd-linear}, all nonzero values of $y$ are admissible.

\begin{lmm}\label{lem:functions-to-sets}
Let $\CP_1,\CP_2$ be two finite collections of nonzero polynomials in $\Z_0[y]$. Suppose that there are constants $\alpha,\delta > 0$ such that for sufficiently large $p$ we have $f: \F_p \rightarrow [0,1]$ with $\E f = \alpha + o(1)$ satisfying
$$
\min\Big\{I_{\CP_1}(f;y) - \alpha^{|\CP_1|+1}, I_{\CP_2}(f;y) - \alpha^{|\CP_2|+1}\Big\} \leq -\delta
$$
for all but $O(1)$ values of $y \in \F_p$. Then there are constants $\alpha_0,c>0$ depending on $\CP_1,\CP_2,\alpha,\delta$ such that for sufficiently large $p$ we have $A \subset \F_p$ with density at least $\alpha_0$ satisfying
$$
\min\Big\{I_{\CP_1}(1_A;y) - \alpha_0^{|\CP_1|+1}, I_{\CP_2}(1_A;y) - \alpha_0^{|\CP_2|+1}\Big\} \leq -c
$$
for all $y \in \F_p$ which are admissible for both $\CP_1$ and $\CP_2$.
\end{lmm}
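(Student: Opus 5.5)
The plan is the standard random-sampling argument, modified to handle the $O(1)$ exceptional values of $y$ and to create a deficit there as well.

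\textbf{Step 1: random sampling.} Let $A'$ be the random set that contains each $x$ independently with probability $f(x)$. For each admissible $y$ for $\CP_i$, the values $x,x+P_1(y),\dots,x+P_k(y)$ are distinct. Hence
$$\E\, I_{\CP_i}(1_{A'};y)=I_{\CP_i}(f;y).$$
Changing one element of $A'$ changes $I_{\CP_i}(1_{A'};y)$ by $O(1/p)$, so McDiarmid's inequality gives concentration with failure probability $\exp(-cp\delta^2)$. A union bound over the $p$ values of $y$ and the two patterns then shows that with high probability
$$|I_{\CP_i}(1_{A'};y)-I_{\CP_i}(f;y)|\le p^{-1/3}$$
simultaneously for all such $y$, and also $|A'|/p=\alpha+o(1)$. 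Taking $\alpha_0$ slightly below $\alpha$ (or adjusting the density by adding or removing $o(p)$ elements, which changes each count by $o(1)$), we get the required deficit $\le -\delta/2$ at all $y$ outside the exceptional set $E$, where $|E|=O(1)$.

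\textbf{Step 2: the exceptional $y$.} For $y\in E$ we have no control; $A'$ might have many progressions there. The main obstacle is to destroy popularity at these $O(1)$ values without harming anything else. The first thing I would try is a random dilation combined with intersection with a sparse structured set. Instead, the cleaner route is to remove points from $A'$: for each $y_0\in E$ that is admissible for $\CP_1$, delete from $A'$ a set $S$ of size $\theta p$, chosen so that $I_{\CP_1}(1_{A};y_0)$ drops by an absolute constant. For example, delete every $x\in A'$ lying in a prescribed residue pattern along the progression: partition $\F_p$ into the arithmetic progressions generated by $y_0$, meaning intervals in the ``$x/y_0$'' coordinate, and remove the points with $x/y_0 \bmod p$ in a union of intervals $J$ of density $\theta$. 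This kills a proportion $\gg\theta$ of the progressions with difference $y_0$ when the $P_j(y_0)/y_0$ are small. That condition fails in general, so instead I would delete $A'\cap(J\cdot P_1(y_0))$. The progression $x,x+P_1(y_0),\dots$ at difference $y_0$ then has its first two points in $J$-scaled coordinates that are consecutive integer shifts. Choosing $J$ as the set of even-indexed blocks of length $L$, each progression loses a point with probability about $1/2$.

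\textbf{Step 3: cost of the deletions.} We must bound how these deletions affect the other $y$. I would do the deletion in a pseudorandom way: delete each $x$ independently with probability $\theta$, but only for $x$ in a structured set $S_{y_0}$ defined as above, intersected with a random subset. The effect on the counts at a generic $y$ is then a change of order $\theta$ times a Fourier correlation of $1_{S_{y_0}}$ along the pattern. This correlation is small unless $y$ is near $y_0$ in a Bohr sense, so it is problematic only for $O(1)$ values.

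This gives a cycle, since new exceptional values may be created. I would break it by choosing the parameters hierarchically: make the deletion effect at $y_0$ of size $\theta$ while the effect at all other $y$ is at most $\theta^2\ll\delta$. Alternatively, I would simply observe that deletions only decrease $I$, so removing points can never turn a deficient $y$ into a popular one. The only issue is that the density drops, which lowers the target $\alpha_0^{|\CP|+1}$. Since the total deletion is $\theta|E|p$ with $\theta$ small relative to $\delta$, the target moves by $O(\theta)$, which is less than $\delta/4$. At $y_0$ the count decreases by $\gg\theta$, while the target decreases by only $O(\theta\alpha^{|\CP_1|})$. This monotonicity observation is the key, and with it the proof closes with $c=\min(\delta/4,c'\theta)$.
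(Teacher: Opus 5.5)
Your Step 1 is fine, but Steps 2--3 have a genuine gap. The hypothesis says nothing about $I_{\CP_1}(f;y_0)$ for $y_0\in E$. So after Step 1, the excess $I_{\CP_1}(1_{A'};y_0)-\alpha^{k+1}$ (with $k=|\CP_1|$) is uncontrolled and may be much larger than $\delta$, and your argument must remove all of it. However, deleting $\theta p$ points lowers $I_{\CP_1}(\cdot\,;y_0)$ by at most $(k+1)\theta$, because each point lies in at most $k+1$ progressions with difference $y_0$. Hence the net gain at $y_0$ (count drop minus target drop) is $O(\theta)$, whatever structured set you delete. To reach a deficit there you would need $\theta$ at least as large as that unknown excess.

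On the other hand, at the generic $y$ you rely only on monotonicity. So you need the target $\alpha_0^{|\CP_i|+1}$ to move by less than $\delta/4$, i.e.\ $\theta\ll\delta$. These two requirements are incompatible, so ``the count decreases by $\gg\theta$ while the target decreases by $O(\theta\alpha^{|\CP_1|})$'' does not close the argument. A side remark: with even-indexed blocks of length $L$, the points $u,u+1$ land in the same block, so deletions along a progression are positively correlated, which is the opposite of what you want.

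The missing idea is to thin the set out before repairing $E$. Put $x$ in $B$ independently with probability $\tau f(x)$, with $\tau$ small in terms of $\delta$.
\begin{itemize}
\item At admissible $y\notin E$, both $\E I_{\CP_i}(1_B;y)$ and the target scale like $\tau^{|\CP_i|+1}$. So the deficit becomes $\tau^{|\CP_i|+1}\delta$, up to $O(p^{-1/4})$ by bounded differences.
\item For $y\in E$, pick $P_y\in\CP_1$ with $P_y(y)\neq0$, set $T=\{P_y(y):y\in E\}$, and let $A=\{x\in B: x+t\notin B\text{ for all }t\in T\}$. Then $I_{\CP_1}(1_A;y)=0$ exactly for $y\in E$, with no need to know the count there.
\item Each $x$ survives with probability at least $\tau f(x)(1-\tau)^{|T|}$. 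So the density drops only by a factor $1-O(\tau)$, giving $\alpha_0=\alpha\tau(1-O(\tau))$.
\end{itemize}
Monotonicity $I_{\CP_i}(1_A;y)\leq I_{\CP_i}(1_B;y)$ then gives a deficit of at least $\alpha_0^{|\CP_i|+1}-\tau^{|\CP_i|+1}(\alpha^{|\CP_i|+1}-\delta)-O(p^{-1/4})\geq\tau^{|\CP_i|+1}\delta/2-O(p^{-1/4})$ once $\tau\ll\delta$.

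The sparsification is exactly what decouples the cost of repairing $E$ (relative density $O(\tau)$) from the size of the deficit at the other $y$. Without it, that cost is of order $1$.
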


\begin{proof}
We allow all implied constants to depend on $\CP_1,\CP_2$.
Let $E$ be the exceptional set of values of $y$ from the hypothesis which are admissible.
Let $\tau > 0$ be a sufficiently small constant in terms of $\delta$. Define a random subset $B \subset \F_p$ by including each element $x \in \F_p$ in $B$ independently with probability $\tau f(x)$. For each $y \in E$, since $y$ is admissible for $\CP_1$, we can pick a polynomial $P_y \in \CP_1$ such that $P_y(y) \neq 0$. Let  $T = \{P_y(y): y \in E\}$. Define $A \subset B$ by
$$
A = \{x \in B: x+t \notin B\text{ for all }t \in T\}.
$$
The construction ensures that $I_{\CP_1}(1_A;y) = 0$ for every $y \in E$. Hence it suffices to prove the conclusion for admissible $y \notin E$.

To estimate the size $|A|$, note that $x \in A$ with probability
$$
\tau f(x) \prod_{t \in T} (1- \tau f(x+t)) \geq \tau f(x) (1-\tau)^{|T|} \geq \tau(1-O(\tau))f(x).
$$
Hence the expected size of $A$ is at least $\alpha \tau (1-O(\tau))p$. Note that changing the outcome of the random event $x \in B$ for one value of $x$ can change the size $|A|$ by at most $O(1)$. Thus it follows from the bounded differences inequality \cite{McD89} that
\begin{equation}\label{eq:Asize}
|A| \geq \alpha_0 p
\end{equation}
with probability $1-o(1)$, for some $\alpha_0 = \alpha\tau(1-O(\tau))$. On the other hand, by the bounded differences inequality again, we have for admissible $y$ and $i \in \{1,2\}$ that
\begin{equation}\label{eq:IC-expectation}
I_{\CP_i}(1_B;y) \leq \E I_{\CP_i}(1_B;y) + O(p^{-1/4}) \leq \tau^{|\CP_i|+1} I_{\CP_i}(f;y) + O(p^{-1/4})
\end{equation}
with probability $1- O(p^{-2})$. Thus with positive probability we have \eqref{eq:Asize} and \eqref{eq:IC-expectation} simultaneously for all admissible $y$.
 For admissible $y \notin E$, we have by hypothesis that
$$
I_{\CP_i}(f;y) \leq \alpha^{|\CP_i|+1} - \delta
$$
for some $i \in \{1,2\}$. This leads to
$$
I_{\CP_i}(1_A;y) \leq I_{\CP_i}(1_B;y) \leq \tau^{|\CP_i|+1} (\alpha^{|\CP_i|+1}-\delta) + O(p^{-1/4}) \leq \alpha_0^{|\CP_i|+1}-c
$$
for some constant $c>0$ by using \eqref{eq:Asize} and choosing $\tau$ small enough in terms of $\delta$.
\end{proof}

\section{Constructions for $(x,x+y,x+2y)$ and $(x,x+y^2,x+2y^{2})$}\label{sec:nspd2}

In this section we prove Theorem \ref{counterex2}. Let $\alpha > 0$ be small.
Let $M = M(\alpha)$ be a sufficiently large constant such that we can find a subset $J \subset [M]$ with $|J| \geq 100\alpha^{1/2}M$ with no nontrivial $3$-APs. The construction of Behrend \cite{B46} ensures that we can take $M = \lfloor\exp(c (\log \alpha^{-1})^2)\rfloor$ for some absolute constant $c>0$.

Let $p$ be sufficiently large in terms of $M$, and set $K:=\lfloor p/(4M)\rfloor$. Consider the following disjoint intervals of length $\lfloor K/10\rfloor$:
$$
 I_{j}:=\Big\{(j-1)K+1,\dots,(j-1)K + \lfloor K/10\rfloor\Big\} \quad(\operatorname{mod}\, p)
 $$ 
for $1\le j\le M$. Define
$$
B = \bigcup_{j \in J} I_j,\qquad
A = \Big\{x \in \F_p: x \in B, x^2 \in B\}.
$$
Note that $|B| \sim |J|K/10 \geq 10\alpha^{1/2}MK\geq 2\alpha^{1/2}p$. The following lemma shows that the density of $A$ is roughly the square of the density of $B$, and hence at least $\alpha$, using a standard Fourier argument.
 
\begin{lmm}\label{a}
The set $A\subset \F_p$ constructed above satisfies $|A| \geq \alpha p$.
\end{lmm}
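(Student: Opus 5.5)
We count $|A| = \sum_{x\in\F_p} 1_B(x)1_B(x^2)$ by expanding $1_B$ in Fourier series. Write $\beta = |B|/p$, so that $\beta \geq 2\alpha^{1/2}(1+o(1))$ by the computation just before the lemma. Then
\[
|A| = \sum_{\xi,\zeta\in\F_p} \wh{1_B}(\xi)\wh{1_B}(\zeta)\sum_{x\in\F_p} e_p(\xi x+\zeta x^2).
\]
The term $\xi=\zeta=0$ gives the main contribution $\beta^2 p \geq 4\alpha(1-o(1))p$.

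The remaining terms split into two groups.
\begin{itemize}
\item If $\zeta=0$ and $\xi\neq 0$, the inner sum vanishes.
\item If $\zeta\neq 0$, the inner sum is a Gauss sum of modulus exactly $\sqrt p$.
\end{itemize}
Hence the error is at most
\[
\sqrt p\,\Big(\sum_\xi|\wh{1_B}(\xi)|\Big)^2.
\]
So the task reduces to bounding the $\ell^1$ norm of $\wh{1_B}$.

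Since $B$ is a union of $|J|\leq M$ intervals, each of length at most $p$, we use the standard bound for an interval $I$:
\[
\sum_\xi |\wh{1_I}(\xi)| \ll \log p,
\]
which follows from $|\wh{1_I}(\xi)| \ll \min(|I|/p,\ 1/(p\|\xi/p\|_{\T}))$. By the triangle inequality, $\sum_\xi|\wh{1_B}(\xi)| \ll M\log p$. The error is therefore $O(M^2 \sqrt p\,(\log p)^2)$, which is $o(p)$ since $p$ is sufficiently large in terms of $M$.

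Combining the two, $|A| \geq 4\alpha p(1-o(1)) - o(p) \geq \alpha p$ for large $p$. There is no real obstacle here. The only point needing care is the $\ell^1$ Fourier bound, which costs only a logarithm because $B$ has boundedly many interval components. One could instead use Weyl/Pólya–Vinogradov-type counting of $x$ in $B$ with $x^2$ in an interval, but the Fourier route above is cleanest.
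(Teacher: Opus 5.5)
Your proposal is correct and follows essentially the same argument as the paper: Fourier-expand $1_B(x)1_B(x^2)$, take the $\xi=\zeta=0$ term as the main term of size at least $4\alpha p$, and bound the rest by the Gauss sum estimate times $\|\wh{1_B}\|_{L^1}^2 \ll M^2(\log p)^2$. You also note explicitly that the $\zeta=0$, $\xi\neq 0$ terms vanish, which the paper's proof uses implicitly.
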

\begin{proof}
By the Fourier expansion of $1_{B}$, we have 
$$
		\frac{|A|}{p}=\E_{x}1_{B}(x)1_{B}(x^{2})=\sum_{r,s\in\F_{p}}\widehat{1_{B}}(r)\widehat{1_{B}}(s)\E_{x}e_{p}(rx+sx^{2})
$$
The term from $r=s=0$ contributes $\wh{1_B}(0)^2 \geq (|B|/p)^2 \geq 4\alpha$. Using the Gauss sum estimate, we can bound all the remaining terms by
$$
\ll p^{-1/2} \sum_{r,s\in\F_p} |\wh{1_B}(r)\wh{1_B}(s)| = p^{-1/2} \|\wh{1_B}\|_{L^1}^2.
$$
Since $B$ is a union of at most $M$ intervals and for each interval $I \subset \F_p$ we have $\|\wh{1_I}\|_{L^1} \ll \log p$, it follows that $\|\wh{1_B}\|_{L^1} \ll M \log p$, and thus the expression above is
$$
\ll p^{-1/2} M^2 (\log p)^2 \leq \alpha
$$
since $p$ is sufficiently large. This completes the proof.
\end{proof}

\begin{lmm}\label{approx 3ap}
If three elements $x,y,z \in B$ satisfy 
$$
\Big\|\frac{x+z-2y}{p}\Big\|_{\mathbb{T}}<\frac{1}{10M},
$$ 
then $x,y,z$ must lie in the same interval $I_j$ for some $j \in J$.
\end{lmm}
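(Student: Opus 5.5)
Identify $\F_p$ with the representatives $\{0,1,\dots,p-1\}$. Write $x\in I_{j_1}$, $y\in I_{j_2}$, $z\in I_{j_3}$ with $j_1,j_2,j_3\in J$, so that
\[
x=(j_1-1)K+a,\qquad y=(j_2-1)K+b,\qquad z=(j_3-1)K+c
\]
with $1\le a,b,c\le \lfloor K/10\rfloor$. The plan is to show that the integer $x+z-2y$ is exactly $(j_1+j_3-2j_2)K$ up to a small error. Then the hypothesis forces $j_1+j_3=2j_2$, and the $3$-AP-freeness of $J$ forces $j_1=j_2=j_3$.

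First I compute the integer
\[
N:=x+z-2y=(j_1+j_3-2j_2)K+(a+c-2b).
\]
Here $|a+c-2b|\le 2\lfloor K/10\rfloor\le K/5$. Since $j_i\in[1,M]$, we have $|j_1+j_3-2j_2|\le 2M-2$, and hence $|N|\le 2MK+K/5<p$, using $K\le p/(4M)$. The real content is that $N$ is a genuine integer of absolute value less than $p$, so its distance to the nearest multiple of $p$ can be read off directly without wraparound.

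Next, suppose $d:=j_1+j_3-2j_2\neq 0$. Then $|N|\ge K-K/5=4K/5$. Also $|N|\le 2(M-1)K+K/5\le 2MK-\tfrac{9}{5}K$, so
\[
p-|N|\ge 4MK-2MK+\tfrac95 K\ge 4K/5 .
\]
Therefore $\|N/p\|_{\T}=\min(|N|,p-|N|)/p\ge 4K/(5p)$. Since $K=\lfloor p/(4M)\rfloor\ge p/(4M)-1$, this is at least roughly $1/(5M)-O(1/p)$, which exceeds $1/(10M)$ once $p$ is large in terms of $M$. This contradicts the hypothesis, so $d=0$, i.e.\ $j_1+j_3=2j_2$. (Note also that $N\equiv x+z-2y \pmod p$, which is why the $\F_p$ hypothesis transfers to $N$.)

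Finally, since $J$ contains no nontrivial $3$-APs, the relation $j_1+j_3=2j_2$ with $j_i\in J$ forces $j_1=j_2=j_3$, so $x,y,z$ lie in the same interval $I_j$. The main thing to get right is the wraparound bound $p-|N|\gtrsim K$ in the second step. This is exactly where the choice $K\approx p/(4M)$, leaving a gap of size about $2MK$, is used.
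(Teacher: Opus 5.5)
Your proposal is correct and follows essentially the same argument as the paper. You write the elements of $B$ as $(j-1)K+\text{(offset)}$, show that the hypothesis forces $j_1+j_3=2j_2$ because $K\approx p/(4M)$ rules out wraparound, and finish with the $3$-AP-freeness of $J$. The only cosmetic difference is that the paper moves the offset error to the other side via the triangle inequality in $\T$ and then drops the torus norm using $|(j_1+j_3-2j_2)K/p|\le 1/2$, whereas you lower-bound $\|N/p\|_{\T}$ directly for the integer $N$.
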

\begin{proof}
	We can write
$$
x = (a-1)K + u, \qquad y = (b-1)K+v, \qquad z=(c-1)K+w,
$$
where $a,b,c\in J$ and $1 \leq u,v,w \leq K/10$. Then
$$
\Big\|\frac{(a+c-2b)K}{p}\Big\|_{\T} = \Big\|\frac{u+w-2v}{p} - \frac{x+z-2y}{p}\Big\|_{\T} \leq \frac{4K}{10p} + \frac{1}{10M} \leq \frac{1}{5M}.
$$
Since $1 \leq a,b,c \leq M$, we have
$$
\Big|\frac{(a+c-2b)K}{p}\Big| \leq \frac{2MK}{p} \leq \frac{1}{2}.
$$
It follows that we must have
$$
\Big|\frac{(a+c-2b)K}{p}\Big| \leq \frac{1}{5M},
$$
and hence $|a+c-2b| < 1$, which forces $a+c-2b=0$. Since $J$ does not contain nontrivial $3$-APs, we have $a=b=c$, as desired.
\end{proof}

Let $T(y)$ be the normalized number of $3$-APs in $A$ with common difference $y$:
$$
T(y) = \E_{x \in \F_p} 1_A(x) 1_A(x+y) 1_A(x+2y).
$$
For $y \neq 0$, we will show that either $T(y^2) = 0$ or $T(y)$ is very small.
Suppose that $T(y^2) > 0$ for some $y \neq 0$. Then there exists $x_0 \in \F_p$ such that $x_0, x_0+y^2, x_0+2y^2 \in A$. By Lemma \ref{approx 3ap} applied to the three elements $x_0, x_0+y^2, x_0+2y^2$ (which all lie in $B$), we have $x_0, x_0+y^2, x_0+2y^2$ must lie in a common interval of length $\lfloor K/10\rfloor$. Hence
$$
\Big\|\frac{x_0^2+(x_0+2y)^2-2(x_0+y)^2}{p}\Big\|_\T \leq 2\Big\|\frac{y^2}{p}\Big\|_\T \leq \frac{K}{5p} < \frac{1}{10M}.
$$
Now if $x, x+y, x+2y \in A$ for some $x \in \F_p$, then $x^2, (x+y)^2, (x+2y)^2$ all lie in $B$, and hence we may apply Lemma \ref{approx 3ap} again to conclude that $x^2, (x+y)^2, (x+2y)^2$ must lie in a common interval of length $\lfloor K/10\rfloor$. This implies that
$$
\Big\| \frac{(x+y)^2-x^2}{p}\Big\|_\T = \Big\|\frac{2xy+y^2}{p}\Big\|_\T \leq \frac{K}{10p} < \frac{1}{20M}.
$$
Since $y \neq 0$, the number of $x \in \F_p$ satisfying the above inequality is at most $p/(10M)$, and hence $T(y) \leq 1/(10M)$. This completes the proof by our choice of $M$.

\section{Constructions for pairs of linear patterns}\label{sec:nspd3}

In this section we prove Theorem \ref{no-spd-linear}, which is a consequence of the following proposition by applying Lemma \ref{lem:functions-to-sets}.

\begin{prop}\label{nspd3}
Let $C_1, C_2 \subset \Z$ with each $C_i$ either a triple or an additive quadruple. Suppose that $C_1,C_2$ are not congruent.
For sufficiently large prime $p$, there exists a function $f: \F_p\to [0,1]$ with $\E f = 1/2 + o(1)$, such that
$$
\min\Big\{I_{C_1}(f;y) - 2^{-|C_1|}, I_{C_2}(f;y) - 2^{-|C_2|}\Big\} \leq -c
$$
for some constant $c>0$ depending on $C_1,C_2$ and all but $O_{C_1,C_2}(1)$ values of $y \in \F_p$.
\end{prop}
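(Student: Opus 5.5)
The plan is to follow the template of Section \ref{sec51}, taking
$$
f(x)=\tfrac12+G\Big(\frac{x}{p},\frac{x^2}{p}\Big),\qquad G(s,t)=\sum_{(r,n)\neq(0,0)} a_{r,n}\,e(rs+nt),
$$
where $G$ is a real trigonometric polynomial with small coefficients, to be chosen depending on $C_1,C_2$. Then $f$ takes values in $[0,1]$, and the Gauss sum estimate gives $\E f=1/2+o(1)$.

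\textbf{Step 1: expand the count.} For a finite $C\subset\Z$, expanding $I_C(f;y)$ produces terms indexed by tuples $(r_c,n_c)_{c\in C}$, with $(r_c,n_c)=(0,0)$ standing for the constant $1/2$. Each term is an average over $x$ of $e_p\big(\sum_c r_c(x+cy)+n_c(x+cy)^2\big)$. This average is $O(p^{-1/2})$ unless $\sum_c n_c=0$, and it vanishes unless $\sum_c (r_c+2n_c c y)=0$. For all but $O(1)$ values of $y$ the second condition forces $\sum_c r_c=0$ and $\sum_c n_c c=0$, exactly as in the proof of Lemma \ref{lem51}. The surviving terms then have phase $e\big(\phi\sum_c r_c c+\theta\sum_c n_c c^2\big)$, where $\phi=y/p$ and $\theta=y^2/p$. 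This gives
$$
I_C(f;y)=2^{-|C|}+\Phi_C(\phi,\theta)+O(p^{-1/2}),
$$
where $\Phi_C$ is an explicit trigonometric polynomial on $\T^2$ whose coefficients are products of the $a_{r,n}$ over the relation tuples of $C$. It therefore suffices to choose $G$ so that $\min(\Phi_{C_1},\Phi_{C_2})\le -c$ for all $(\phi,\theta)\in\T^2$. Treating $(\phi,\theta)$ as a general point of $\T^2$, rather than only as a point on the curve $(y/p,y^2/p)$, only makes this requirement stronger.

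\textbf{Step 2: describe the relation modules.} Write $N_C=\{(n_c)_{c\in C}:\sum_c n_c=\sum_c n_c c=0\}$. For a triple, $N_C$ is spanned by a single primitive vector $v_C$, and its $\theta$-frequency $Q_C=\sum_c (v_C)_c c^2$ is nonzero. For an additive quadruple, $N_C$ has rank $2$ and contains the distinguished vector $(1,-1,-1,1)$, whose $\theta$-frequency is $2ab\neq0$. The non-congruence hypothesis should translate into the following statement: there is some relation tuple for $C_1$, say of lowest order in the coefficients, whose combinatorial shape (the multiset of $n$-entries, together with the resulting frequency in $(\phi,\theta)$) cannot be realized as a relation tuple of $C_2$ at the same or lower order. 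The point is that congruent sets $C=t\pm D$ have identical relation data: a translation changes $\sum n_c c^2$ only by multiples of the vanishing quantities $\sum n_c$ and $\sum n_c c$.

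\textbf{Step 3: choose coefficients on two scales.} Using the distinguishing data from Step 2, choose the coefficients of $G$ on two scales, as in Section \ref{sec51}:
\begin{enumerate}
\item At order $\eps$, put coefficients supported on $\{(0,n)\}$ for $n$ in a small set, arranged so that the leading part of $\Phi_{C_1}$ is $\eps^3\psi(u)$ (or $\eps^4\psi(u)$ for a quadruple). Here $u=\cos(2\pi Q\theta)$, and $\psi$ is a polynomial of the shape $(u-\tfrac12)^2-\tfrac34$, so that it is $\le -c$ except on the set $\{u\le -1/4\}$.
\item At a smaller scale, put coefficients $a_{\pm r,0}$ or mixed $a_{r,n}$, designed so that $\Phi_{C_2}$ is dominated by a term like $\eps^{K}\cos(2\pi Q\theta)$. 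This term is $\le -c\eps^K$ whenever $u\le-1/4$, which is the set where $\Phi_{C_1}$ may fail to be negative.
\end{enumerate}
One must also check that the order-$\eps$ coefficients do not contribute to $\Phi_{C_2}$ at any order that competes with this designed term, and that the smaller-scale coefficients perturb $\Phi_{C_1}$ only by $O(\eps^{K+1})$. It is exactly here that non-congruence is needed: if $C_1,C_2$ were congruent, $\Phi_{C_1}=\Phi_{C_2}$ identically and no choice could work.

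The case split would be: two triples; a triple and an additive quadruple; two additive quadruples. In each case I would compute the relevant $v_C$, $Q_C$ and the lowest-order relation tuples explicitly.

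\textbf{Main obstacle.} The hardest step is Step 3 in the case of two non-congruent triples with proportional relation data. There the $\theta$-frequencies of the pure $x^2$-terms may coincide, for example $\{0,1,3\}$ against $\{0,1,2\}$ after rescaling, so the separation must come from the $\phi$-dependence or from mixed frequencies $a_{r,n}$ with both $r\neq0$ and $n\neq0$. I would first try to make $\Phi_{C_2}$ depend on a frequency $\sum_c r_c c$ in $\phi$ that is compatible with $\theta$ for $C_2$ but produces no zero-sum relation tuple for $C_1$. The argument would conclude, as at the end of Section \ref{sec51}, by choosing $\eps$ small and then $p$ large.
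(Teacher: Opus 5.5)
Your Step 1 is fine and matches the paper's expansion (Lemmas \ref{lem:IC-triple} and \ref{lem:IC-quadruple}). Steps 2--3, however, have a genuine gap. The idea of separating $C_1$ from $C_2$ by orders of $\eps$ (a leading term $\eps^3\psi(u)$ for $C_1$ and a smaller designed term $\eps^K\cos(2\pi Q\theta)$ for $C_2$) fails for pairs the proposition covers.

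Take the non-congruent triples $C_1=\{0,1,2\}$ and $C_2=\{0,2,4\}$. The surviving tuples are identical for both, since the constraint $\sum_c n_c c=0$ reads $n_1+2n_2=0$ in both cases. The phases are $\phi(r_1+2r_2)+\theta(n_1+4n_2)$ and $2\phi(r_1+2r_2)+4\theta(n_1+4n_2)$ respectively. Hence $\Phi_{C_2}(\phi,\theta)=\Phi_{C_1}(2\phi,4\theta)$ for \emph{every} choice of $G$. Whatever dominates $\Phi_{C_1}$ therefore dominates $\Phi_{C_2}$ at the same order, and no smaller-scale term can take over. Concretely, take your $\psi(u)=(u-\frac12)^2-\frac34$ with $u=\cos(4\pi\theta)$. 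At $2\theta\equiv 3/8$ both $\psi(\cos(3\pi/4))$ and $\psi(\cos 3\pi)$ are positive, so both counts exceed $1/8$ for a positive proportion of $y$.

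Your parenthetical that a quadruple has leading part of order $\eps^4$ is also not right in general. Writing $f=\frac12+g$, a quadruple's count contains $\frac12 I_B(g;y)$ for each sub-triple $B$, which is of order $\eps^3$. So a triple and a quadruple sharing a shape are coupled at the same order (e.g.\ the $3$-AP versus the $4$-AP).

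Finally, the Section \ref{sec51} device of pure linear coefficients $a_{\pm r,0}$ does not transfer. For linear patterns in $y$ these produce only $\phi$-dependence, never $\theta$-dependence; it worked there only because the second pattern had difference $y^2$.

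The missing idea is to work at a single order and exploit dilations. The paper proceeds in three steps.
\begin{enumerate}
\item By Lemma \ref{lem:assign-Ghat}, each triple shape $\sigma$ carries a freely prescribed polynomial: a triple of shape $\sigma$ and scale $d$ has count $\frac18+\eps^{7/2}h_\sigma(Ld^2y^2/p)+O(\eps^4+p^{-1/2})$. This places the coefficients at $(K^\lambda n,n)$ with $K$ huge, forcing the three $m$-frequencies of any surviving term to share the same $\lambda$. That kills the $\phi$-dependence and the cross-talk between different shapes. The $\eps^{3/2}$ weight pushes the one unavoidable $\{1,1\}$/$\{1,2\}$ interaction down to $O(\eps^4)$. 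A quadruple with shapes $\sigma,\tau$ then contributes $h_\sigma+h_\tau$.
\item Lemma \ref{min} builds $h_i=K_N(\cdot-\theta_i)-1$ from Fej\'er kernels with $\|b\theta_1-a\theta_2\|_{\T}=\frac12$, so that $\min(h_1(ax),h_2(bx))\le-\frac12$. When $a=d_1^2\ne b=d_2^2$ one may take $h_1=h_2$, and this is exactly what handles the same-shape example above.
\item Shared shapes are handled by setting $h_\tau=h_2-h_1$, so that the quadruple sees $h_2$ while the triple sees $h_1$.
\end{enumerate}
Your plan contains none of these ingredients, and its two-scale mechanism can at best handle some pairs with distinct shapes.
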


In this section we allow all implied constants to depend on $C_1,C_2$. 
We define $f: \F_p \rightarrow [0,1]$ by
$$
f(x) = \frac{1}{2} + G\Big(\frac{x}{p}, \frac{x^2}{p}\Big),
$$
where $G: \T^2\rightarrow [-1/2,1/2]$ is a real trigonometric polynomial (depending on $C_1,C_2$), whose Fourier transform $\wh{G}(m,n)$ is supported on $O(1)$ pairs $(m,n)$ and satisfies $\wh{G}(m,0)=0$ for all $m$ and $|\wh{G}(m,n)| \leq \eps$ for all $m,n$, where $\eps > 0$ is a sufficiently small constant. Hence $\|G\|_{\infty} = O(\eps)$ and $\E f= 1/2+o(1)$ by the Gauss sum estimate.

\subsection{$I_C(f;y)$ for triples and additive quadruples}\label{sec:IC}

We begin with writing $I_C(f;y)$ in terms of the Fourier coefficients $\wh{G}(m,n)$.

\begin{lmm}\label{lem:IC-triple}
	Let the notations and assumptions be as above.
	If $C = \{0, du, d(u+v)\}$, where $d,u,v$ are positive integers and $(u,v)=1$, then
	$$
	\begin{aligned}
		I_C(f;y) &= \frac{1}{8} + \sum_{n\in\Z\setminus\{0\}}\sum_{\substack{m_0,m_1,m_2 \in \Z \\m_0+m_1+m_2=0}} \wh{G}(m_0,vn) \wh{G}(m_1,-(u+v)n) \wh{G}(m_2,un)  \\
		&\qquad\qquad\qquad \cdot e_p( d[um_1+(u+v)m_2]y + d^2uv(u+v)n y^2) + O(p^{-1/2})
	\end{aligned}
	$$
	for all but $O(1)$ exceptional values of $y \in \F_p$.
\end{lmm}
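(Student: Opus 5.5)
Write $f = \tfrac12 + g$ with $g(x) = G(x/p, x^2/p) = \sum_{(m,n)} \wh{G}(m,n)\, e_p(mx + nx^2)$. Since $\wh{G}$ is supported on $O(1)$ pairs, this is a finite sum. Expanding $I_C(f;y) = \E_x \prod_{i=0}^2 f(x + c_i y)$ with $c_0 = 0$, $c_1 = du$, $c_2 = d(u+v)$ produces eight terms, indexed by which factors are $\tfrac12$ and which are $g$. The constant term is $1/8$. The plan is to show that all terms involving one or two copies of $g$ are $O(p^{-1/2})$ outside $O(1)$ values of $y$, and that the triple term gives the stated sum.

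\emph{Terms with one or two copies of $g$.} A term with a single $g$ is $\tfrac14 \E_x g(x + c_i y)$, which equals $\sum_{m,n} \wh{G}(m,n)\, \E_x e_p(m(x + c_i y) + n(x + c_i y)^2)$. Every $n$ appearing here is nonzero, because $\wh{G}(m,0) = 0$. So each inner average is a quadratic Gauss sum of size $O(p^{-1/2})$, since $p$ is large compared with $|n|$.

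For a term with two copies of $g$ at positions $i \ne j$, the phase is $(n + n')x^2 + \big(m + m' + 2c_i n y + 2c_j n' y\big)x + (\text{const})$. If $n + n' \ne 0$, Gauss sums give $O(p^{-1/2})$. If $n' = -n$, the $x$-coefficient is $m + m' + 2n(c_i - c_j)y$, and the average over $x$ vanishes unless this is $0$ in $\F_p$. Since $n \ne 0$ and $c_i \ne c_j$, for each of the $O(1)$ quadruples $(m, m', n, n')$ this happens for at most one $y$. These $O(1)$ values of $y$ form the exceptional set.

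\emph{Triple term.} We have
$$
\E_x \prod_{i=0}^2 g(x + c_i y) = \sum \wh{G}(m_0,n_0)\,\wh{G}(m_1,n_1)\,\wh{G}(m_2,n_2)\; \E_x e_p\Big(\sum_i m_i(x + c_i y) + n_i(x + c_i y)^2\Big).
$$
The inner average is $O(p^{-1/2})$ unless $n_0 + n_1 + n_2 = 0$. When that holds, the linear coefficient in $x$ is $(m_0 + m_1 + m_2) + 2y(c_1 n_1 + c_2 n_2)$. I split into two cases.

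\begin{enumerate}
\item If $c_1 n_1 + c_2 n_2 \ne 0$ as an integer, the coefficient vanishes for at most one $y$ per tuple. We add these $y$ to the exceptional set; for every other $y$ the average is $0$.
\item If $c_1 n_1 + c_2 n_2 = 0$, i.e.\ $u n_1 + (u+v) n_2 = 0$, then $\gcd(u, u+v) = 1$ forces $(n_1, n_2) = (-(u+v)n,\, un)$ and hence $n_0 = vn$. The average is then nonzero only if $m_0 + m_1 + m_2 = 0$ in $\F_p$, and this is equivalent to equality in $\Z$ because $p$ is large and the $m_i$ range over $O(1)$ values.
\end{enumerate}

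The case $n = 0$ in (ii) contributes nothing, again because $\wh{G}(m,0) = 0$. So only $n \in \Z \setminus \{0\}$ survives.

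\emph{Evaluating the surviving phase.} It remains to compute the constant term of the phase in case (ii). The linear part is $m_1 c_1 y + m_2 c_2 y = d\,[u m_1 + (u+v) m_2]\,y$. The quadratic part is
$$
(n_1 c_1^2 + n_2 c_2^2)\,y^2 = d^2 n\,\big[-(u+v)u^2 + u(u+v)^2\big]\,y^2 = d^2 uv(u+v)\,n\,y^2.
$$
This matches the exponent in the statement exactly, which gives the claimed formula. The main point needing care is the exceptional-set bookkeeping: each discarded $y$ must be tied to a nonzero integer relation among the finitely many frequencies, so that only $O(1)$ values of $y$ are lost. Everything else is routine Gauss-sum estimation.
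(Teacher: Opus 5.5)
Your proof is correct and follows essentially the same route as the paper. You Fourier-expand, discard the Gauss sums with nonzero quadratic coefficient, exclude the $O(1)$ values of $y$ at which a nonzero integer relation among the finitely many frequencies becomes a congruence mod $p$, use $\gcd(u,u+v)=1$ to get $(n_0,n_1,n_2)=(vn,-(u+v)n,un)$, and evaluate the constant phase. The only difference is bookkeeping: the paper does not split $f=\frac12+g$ but treats $\frac12$ as the coefficient $c_{0,0}$, so your single- and double-$g$ terms are absorbed into its $n=0$ case, where $\wh{G}(m,0)=0$ forces $m_0=m_1=m_2=0$ and leaves only the main term $\frac18$.
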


\begin{proof}
Using the Fourier expansion $f(x) = \sum_{m,n\in\Z} c_{m,n} e_p(mx +nx^2)$, where 
$c_{0,0} = 1/2$ and $c_{m,n} = \wh{G}(m,n)$ for $(m,n) \neq (0,0)$, 
we obtain
$$
I_C(f;y) = \sum_{n_0,n_1,n_2} \sum_{m_0,m_1,m_2} c_{m_0,n_0} c_{m_1,n_1} c_{m_2,n_2}\cdot\E_{x \in \F_p}e_p(P_{m_0,m_1,m_2,n_0,n_1,n_2}(x)),
$$
where the quadratic polynomial $P_{m_0,m_1,m_2,n_0,n_1,n_2}(x)$ is given by
$$
m_0x + m_1(x+duy) + m_2(x+d(u+v)y) + n_0x^2 + n_1(x+duy)^2 + n_2(x+d(u+v)y)^2.
 $$
Examining its quadratic and linear coefficients, we see that the inner average over $x$ is $O(p^{-1/2})$ unless
$$
n_0+n_1+n_2 = m_0+m_1+m_2 + 2d[un_1+(u+v)n_2]y = 0.
$$
Since we may restrict to the $O(1)$ pairs $(m_0,n_0),(m_1,n_1),(m_2,n_2)$ with the nonzero Fourier coefficients, for all but $O(1)$ exceptional values of $y \in \F_p$ we may deduce  from the above identities that
$$
n_0+n_1+n_2 = m_0+m_1+m_2 = un_1 + (u+v)n_2 = 0.
$$
Since $(u,v)=1$, it follows that $(n_0,n_1,n_2) = (vn, -(u+v)n, un)$ for some $n \in \Z$. If $n_0=n_1=n_2=0$, then by the support of $\wh{G}$ we must also have $m_0=m_1=m_2=0$, which contributes the main term $1/8$. The desired conclusion follows after computing the constant polynomial $P_{m_0,m_1,m_2,n_0,n_1,n_2}(x)$ when $(n_0,n_1,n_2) = (vn, -(u+v)n, un)$ and $m_0+m_1+m_2=0$.
\end{proof}

\begin{lmm}\label{lem:IC-quadruple}
Let the notations and assumptions be as above.	
	If $C$ is an additive quadruple, then
	$$
	I_{C}(f;y) = \frac{1}{16} + \frac{1}{2}\sum_{\substack{B \subset C \\ |B|=3}} \Big(I_{B}(f;y)-\frac{1}{8}\Big) + O(\eps^4 + p^{-1/2})
	$$
	for all but $O(1)$ exceptional values of $y \in \F_p$.
\end{lmm}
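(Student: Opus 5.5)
We will write $f = \frac12 + g$ with $g(x) = G(x/p, x^2/p)$, so $\|g\|_\infty = O(\eps)$ and $\E g = o(1)$. Expanding $I_C(f;y) = \E_x \prod_{c\in C}(\frac12 + g(x+cy))$ multilinearly, we obtain a sum over subsets $S \subset C$ of terms $2^{-(4-|S|)} \E_x \prod_{c\in S} g(x+cy)$. The term $S=\emptyset$ gives $1/16$. The term $|S|=4$ is bounded by $\|g\|_\infty^4 = O(\eps^4)$. The terms with $|S|=1$ are $\frac18\E g = O(p^{-1/2})$ by the Gauss sum estimate, since $\wh G(m,0)=0$ forces every frequency to have nonzero quadratic part. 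So it remains to handle the sizes $|S|=2$ and $|S|=3$.

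The analogous expansion of each triple $B$ gives
\[
I_B(f;y) - \tfrac18 = \tfrac14\sum_{|S|=1,\,S\subset B}\E g + \tfrac12\sum_{|S|=2,\,S\subset B} \E\prod_{S} g + \E\prod_{c\in B} g(x+cy).
\]
Multiplying by $\frac12$ and summing over the four triples $B \subset C$, each triple term $S$ appears once with coefficient $\frac12$. This matches the coefficient $2^{-1}$ in $I_C$. Each pair $S$ lies in exactly two triples, so it appears with coefficient $2\cdot\frac12\cdot\frac12 = \frac12$. The corresponding coefficient in $I_C$ is $2^{-2} = \frac14$. So the claimed identity reduces to showing that every pair term $\E_x g(x+ay)g(x+by)$ with $a \ne b$ is $O(p^{-1/2})$ for all but $O(1)$ values of $y$. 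The singleton terms are already negligible on both sides.

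To handle the pair terms, we Fourier expand:
\[
\E_x g(x+ay) g(x+by) = \sum \wh G(m_0,n_0)\wh G(m_1,n_1)\,\E_x e_p\big(m_0(x+ay)+n_0(x+ay)^2+m_1(x+by)+n_1(x+by)^2\big).
\]
By the Gauss sum bound, the inner average is $O(p^{-1/2})$ unless two conditions hold: $n_0+n_1=0$, and $m_0+m_1+2(an_0+bn_1)y = 0$ in $\F_p$. Under $n_1=-n_0$ the second condition reads $m_0+m_1+2(a-b)n_0 y=0$. Both $n_0$ and $n_1$ are nonzero by the support condition on $\wh G$, and $a \ne b$. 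Since only $O(1)$ frequency pairs appear, this linear condition in $y$ has a solution for only $O(1)$ values of $y$ (once $p$ is large, $2(a-b)n_0 \not\equiv 0$). These are the exceptional values. Outside them, each pair term is $O(p^{-1/2})$, and the lemma follows.

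The only real check is the combinatorial bookkeeping. It uses only that $|C|=4$; the additive-quadruple structure is not needed for this identity. The essential point is the coefficient mismatch on pairs, which is harmless because pair correlations vanish. This mirrors the vanishing of $T(h,q)$ in Lemma \ref{lem52}.
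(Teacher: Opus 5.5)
Your proof is correct and follows essentially the paper's argument. Both write $f=\frac12+g$ and expand multilinearly over subsets of $C$. The four-fold term is $O(\eps^4)$, the singleton terms are $O(p^{-1/2})$ by Gauss sums, and the pair correlations are $O(p^{-1/2})$ away from $O(1)$ values of $y$. The only difference is bookkeeping: you expand $I_B(f;y)-\frac18$ directly, while the paper compares it with $I_B(g;y)$ via the Fourier formula of Lemma \ref{lem:IC-triple}. Your route is a slightly cleaner way to see the same cancellation.
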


\begin{proof}
From $f(x) = 1/2 + g(x)$ where $g(x) = G(x/p, x^2/p)$, we can expand
$$
I_C(f;y) = \sum_{B \subset C} 2^{|B|-|C|}\E_{x \in \F_p} \prod_{c \in B} g(x+cy).
$$
The term $B = \emptyset$ contributes $1/16$. The term $B = C$ is $O(\eps^4)$ since $\|g\|_{\infty} = O(\eps)$. The terms with $|B|=1$ contribute $O(p^{-1/2})$ by the Gauss sum estimate. The terms with $|B| = 3$ lead to
$$
\frac{1}{2} I_B(g;y) = \frac{1}{2} \Big(I_B(f;y) - \frac{1}{8}\Big) + O(p^{-1/2}),
$$
where the comparison between $I_B(f;y)$ and $I_B(g;y)$ follows from applying Lemma \ref{lem:IC-triple} to $I_B(f;y)$ and also applying the same argument to $I_B(g;y)$, which leads to the same formula without the term $1/8$. Finally, if $|B|=2$ with $B = \{b_1,b_2\}$ then we can show that
$$
\E_{x \in \F_p} g(x+b_1y) g(x+b_2y) = O(p^{-1/2})
$$
for all but $O(1)$ values of $y\in \F_p$.
This can be seen by either running the same argument as in the proof of Lemma \ref{lem:IC-triple}, or by enlarging $B = \{b_1,b_2\}$ to a triple $\{b_1,b_2,b_3\}$ and viewing the above as
$$
\E_{x \in \F_p} g(x+b_1y) g(x+b_2y) \ve{1}(x+b_3y)
$$
(where $\ve{1}$ denotes the constant function $1$), and finally using an asymmetric version of Lemma \ref{lem:IC-triple}. This completes the proof.
\end{proof}

\subsection{Assigning the Fourier coefficients of $G$}\label{sec:assign-fourier}

After translation, every triple can be written as $\{0, du, d(u+v)\}$, where $d,u,v$ are positive integers with $(u,v)=1$. We use the unordered pair $\{u,v\}$ (allowing $u=v=1$) to denote its \emph{shape}. Note that congruent triples have the same shape. After translation, every additive quadruple can be written as $\{0, du, dv, d(u+v)\}$, where $d,u,v$ are positive integers with $u<v$ and $(u,v)=1$. It has four subsets which are triples:
$$
\{0, du, dv\},  \qquad \{du, dv, d(u+v)\}, \qquad \{0, du, d(u+v)\}, \qquad \{0, dv, d(u+v)\}.
$$
The first two triples above are congruent and have the same shape $\{u,v-u\}$, and the last two triples above are also congruent and have the same shape $\{u,v\}$. We say that the additive quadruple has shapes $\{u,v-u\}$ and $\{u,v\}$. 

\begin{lmm}\label{lem:assign-Ghat}
	For every $\sigma = \{u,v\}$ which appears as the shape or one of the two shapes of $C_1$ and $C_2$, let $h_{\sigma}:\T\rightarrow\R$ be a real trigonometric polynomial of degree $O(1)$ with $\int h_{\sigma}=0$ and $\|h_{\sigma}\|_{\infty} = O(1)$. Let $L$ be a positive integer divisible by $uv(u+v)$ for all the shapes $\sigma = \{u,v\}$ involved. There exists a real trigonometric polynomial  $G: \T^2\rightarrow \R$ of degree $O(1)$, satisfying   $\wh{G}(m,0)=0$ for all $m$ and $|\wh{G}(m,n)| \leq \eps$ for all $m,n$, such that  for each shape $\sigma = \{u,v\}$ involved we have the identity
	$$
	\begin{aligned}
		&\sum_{n\in\Z\setminus\{0\}}\sum_{\substack{m_0,m_1,m_2 \in \Z \\m_0+m_1+m_2=0}} \wh{G}(m_0,vn) \wh{G}(m_1,-(u+v)n) \wh{G}(m_2,un) \\
		&\qquad \cdot e([um_1+(u+v)m_2]X + uv(u+v)n Y) = \eps^{7/2} h_{\sigma}(LY) + O(\eps^4)
	\end{aligned}
	$$
	for all $X, Y \in \T$.
\end{lmm}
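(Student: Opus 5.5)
The plan is to build $G$ as a sum $G=\sum_\sigma G_\sigma$ of independent blocks, one for each shape $\sigma=\{u,v\}$ that occurs. Each block has all its Fourier coefficients of size $\asymp \eps^{7/6}$ (which is $\le\eps$). For one shape, the only nonvanishing triple products in the displayed sum should be the intended ones, and each such product equals $\eps^{7/2}$ times a Fourier coefficient of $h_\sigma$. The condition $\wh{G}(m,0)=0$ holds automatically, since every frequency we use has nonzero second coordinate.

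Fix a shape $\sigma=\{u,v\}$ and write $h_\sigma(t)=\sum_{0<|k|\le K}a_k e(kt)$, with $a_{-k}=\overline{a_k}$ and $a_0=0$ because $\int h_\sigma=0$. Put $M_\sigma=L/(uv(u+v))\in\Z$. Choose a very large integer $T_\sigma$, with the $T_\sigma$ for distinct shapes widely separated (lacunary). For each $k$ with $a_k\neq0$, place three frequencies, one for each ``role'':
$$
(-vT_\sigma,\ vkM_\sigma),\qquad ((u+v)T_\sigma,\ -(u+v)kM_\sigma),\qquad (-uT_\sigma,\ ukM_\sigma).
$$
Give them coefficients $\eps^{7/6}\beta_k,\ \eps^{7/6}\gamma_k,\ \eps^{7/6}\delta_k$, where $\beta_k\gamma_k\delta_k=a_k$ and each of these has modulus $\le |a_k|^{1/3}=O(1)$. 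To make $G$ real, put the conjugate coefficients at the negated frequencies, i.e.\ $\beta_{-k}=\overline{\beta_k}$ and so on; this is consistent because $a_{-k}=\overline{a_k}$.

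For the intended triple, take $n=kM_\sigma$ and $(m_0,m_1,m_2)=(-v,u+v,-u)T_\sigma$. Then $m_0+m_1+m_2=0$, and the $X$-coefficient is
$$
um_1+(u+v)m_2=\big(u(u+v)-(u+v)u\big)T_\sigma=0 .
$$
The $Y$-coefficient is $uv(u+v)kM_\sigma=kL$. So the intended terms sum exactly to $\eps^{7/2}\sum_k a_k e(kLY)=\eps^{7/2}h_\sigma(LY)$, as required.

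The main step is to show that no other triple of frequencies in the support of $G$ contributes. Such a triple would have to satisfy both the $n$-pattern $(n_0,n_1,n_2)=(vn,-(u+v)n,un)$ and $m_0+m_1+m_2=0$. I will argue in three stages.
\begin{itemize}
\item \emph{Different shapes.} Since the $T_{\sigma'}$ are lacunary, a zero sum of three first coordinates forces all three frequencies to come from the same shape $\sigma'$, up to bounded coefficients.
\item \emph{Sign and role constraints within one shape.} Inside a single shape, the first coordinates lie in $\pm\{-v',u'+v',-u'\}T_{\sigma'}$. The only zero sums are then $\pm$ (one of each role), since the two negative entries must add up to $-(u'+v')$.
\item \emph{Matching against the $n$-pattern.} The remaining freedom is the order of the roles in the three positions, together with the global sign, the cross-shape case $\sigma'\neq\sigma$, and the parameters $k$. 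These must be matched against the constraints $n_0/v=-n_1/(u+v)=n_2/u$.
\end{itemize}

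I expect the third stage to be the main obstacle. Coincidences such as $uk=vk'$ could make a permuted role assignment compatible with the $n$-pattern. If they arise, I will first try to eliminate them by adding a further shape- and role-dependent large tag to the second coordinates. Concretely, I would replace $kM_\sigma$ by $kM_\sigma+R_\sigma\lambda$ with a huge multiplier $R_\sigma$, and shift the $m$-tags correspondingly so that the intended $X$-cancellation and the $Y$-coefficient mod the relevant period are unchanged. If that is not clean, I would fall back on checking that any surviving spurious terms involve at least one small factor, and choosing the unused role magnitudes unevenly so that those terms are $O(\eps^4)$; this is exactly what the error term $O(\eps^4)$ in the statement allows. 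Finally, $G$ has $O(1)$ frequencies of bounded size, and $\|G\|_\infty=O(\eps^{7/6})$.
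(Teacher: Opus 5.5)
There is a genuine gap in your third stage: the construction as given does not satisfy the identity. Your first coordinates $-vT_\sigma$, $(u+v)T_\sigma$, $-uT_\sigma$ do not depend on $k$. So $m_0+m_1+m_2=0$ fixes only the shape and the roles, and frequencies with different $k$ can be combined.

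For $u\neq v$, put the role-$3$ frequency $(-uT_\sigma,uk_0M_\sigma)$ in position $0$, the role-$2$ frequency $((u+v)T_\sigma,-(u+v)k_1M_\sigma)$ in position $1$, and the role-$1$ frequency $(-vT_\sigma,vk_2M_\sigma)$ in position $2$. The first coordinates sum to $0$. The second coordinates are $(vn,-(u+v)n,un)$ with $n=k_1M_\sigma$ as soon as $(k_0,k_1,k_2)=(v^2j,uvj,u^2j)$. This term has $X$-coefficient $(u^2-v^2)T_\sigma\neq0$ and size $\asymp\eps^{7/2}$, so the left-hand side depends on $X$ at order $\eps^{7/2}$. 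For $\sigma=\{1,2\}$ it only needs $a_j,a_{2j},a_{4j}\neq0$, which holds for the Fej\'er-kernel polynomials later fed into this lemma. It is also the only term at its $(X,Y)$-frequency, so nothing cancels it.

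There is a second failure of the same order. For realness you must add the true negations: $-(-vT_\sigma,vkM_\sigma)=(vT_\sigma,-vkM_\sigma)$ is not your $(-k)$-frequency, so $\beta_{-k}=\overline{\beta_k}$ alone does not make $G$ real. Once the negations are added, the $\{1,2\}$-block contains $(-T,kM)$ and $(2T,-2kM)$, where $T=T_{\{1,2\}}$ and $M=L/6$. The triple $(-T,kM),(2T,-2kM),(-T,kM)$ then enters the identity for $\sigma=\{1,1\}$ with zero $X$-coefficient, $Y$-frequency $kL/3$, and size $\eps^{7/2}|a_k|$ under your uniform magnitudes. This triple uses role $3$ twice, so your second-stage claim is also false for $\{1,2\}$. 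Both problems already occur for the pair $(x,x+y,x+2y)$, $(x,x+y,x+3y)$.

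Neither fallback repairs this. The second coordinates of the intended triple cannot carry a tag. Its $Y$-frequency $uv(u+v)n$ must be exactly $kL$ for all $Y\in\T$, since there is no period to work modulo. That forces $n=kM_\sigma$ and hence all three second coordinates. Magnitudes depending only on the role cannot tell a permuted triple from the intended one either, since both use one frequency of each role.

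The missing idea is to attach the lacunary tag to each Fourier mode rather than to each shape. Your role constants $(-v,u+v,-u)$ are already proportional to the second coordinates $(v,-(u+v),u)kM_\sigma$; the only defect is that the scale $T_\sigma$ is shared by all $k$. The paper indexes the pairs $(\sigma,k)$, $k>0$, by $\lambda$. It supports $\wh{G}$ on the points $(K^\lambda n,n)$ with $n\in\pm\{u_\lambda n_\lambda,v_\lambda n_\lambda,(u_\lambda+v_\lambda)n_\lambda\}$, where $n_\lambda=kL/(u_\lambda v_\lambda(u_\lambda+v_\lambda))$ and $K$ is large.

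Then $m_0+m_1+m_2=0$ forces a single $\lambda$, so $m_i=K^\lambda n_i$. The $X$-coefficient $K^\lambda(un_1+(u+v)n_2)$ therefore vanishes for every surviving triple, whatever the role assignment. Which blocks contribute to shape $\sigma$ is decided by the inclusion $\{un,vn,(u+v)n\}\subset\{u_\lambda n_\lambda,v_\lambda n_\lambda,(u_\lambda+v_\lambda)n_\lambda\}$ for $n>0$. This forces $\sigma=\sigma_\lambda$ and $n=n_\lambda$, except when $\sigma=\{1,1\}$ and $\sigma_\lambda=\{1,2\}$. That overlap is made $O(\eps^4)$ by using magnitudes $\eps,\eps,\eps^{3/2}$, with $\eps^{3/2}$ placed at $n_\lambda$ (the frequency used twice) in $\{1,2\}$-blocks. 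That is where an uneven-magnitude device genuinely enters.
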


\begin{proof}
	List all the $O(1)$ nonzero Fourier coefficients $\wh{h_\sigma}(k)$ for all the shapes $\sigma$ involved and $k>0$. Let $K$ be a sufficiently large positive integer. For each index $\lambda$ in this list, let $\wh{h_\sigma}(k)$ be the $\lambda$-th item in the list with $\sigma = \sigma_{\lambda}=\{u_{\lambda},v_{\lambda}\}$ and $k = k_{\lambda}> 0$, and let $n_\lambda = k_\lambda L/[u_\lambda v_\lambda(u_\lambda+v_\lambda)]$.  If $\sigma_{\lambda} \neq \{1,2\}$, then we set
	$$
	\wh{G}(K^\lambda n, n) = \begin{cases} \eps^{3/2} \overline{\wh{h_\sigma}(k)}, &\text{if }n = (u_\lambda+v_\lambda)n_\lambda,\\ \eps, &\text{if }n  \in \{u_\lambda n_\lambda, v_\lambda n_\lambda\}. \end{cases}
	$$
	If $\sigma_{\lambda} = \{1,2\}$, then we set
	$$
	\wh{G}(K^\lambda n, n) = \begin{cases} \eps^{3/2} \wh{h_\sigma}(k), &\text{if }n = n_\lambda,\\ \eps, &\text{if }n  \in \{2n_\lambda, 3n_\lambda\}. \end{cases}
	$$
	Set $\wh{G}(m,n) = \overline{\wh{G}(-m,-n)}$ and set all other Fourier coefficients to be $0$. With this choice, we have for all $\lambda$ that
	$$
	\wh{G}(K^\lambda v_\lambda n_\lambda, v_\lambda n_\lambda) \wh{G}(-K^\lambda (u_\lambda+v_\lambda)n_\lambda,-(u_\lambda+v_\lambda)n_\lambda) \wh{G}(K^\lambda u_\lambda n_\lambda, u_\lambda n_\lambda) = \eps^{7/2} \wh{h_{\sigma}}(k).
	$$
	Now we check that this function $G$ satisfies the desired conclusion. Let $\sigma = \{u,v\}$ be a shape involved. The summand vanishes unless
	$$
	m_0 = K^{\lambda_0} vn, \qquad m_1 = -K^{\lambda_1}(u+v)n, \qquad m_2 = K^{\lambda_2}un
	$$
	for some positive integers $\lambda_0,\lambda_1,\lambda_2$. Since $n \neq 0$ and $m_0+m_1+m_2=0$, this can only happen if $\lambda_0=\lambda_1=\lambda_2 = \lambda$, and hence the coefficient in front of $X$ is $um_1 + (u+v)m_2 = 0$. Thus the left-hand side becomes
	$$
2\operatorname{Re}	\sum_\lambda \sum_{n>0} \wh{G}(K^\lambda vn, vn) \wh{G}(-K^\lambda (u+v)n, -(u+v)n) \wh{G}(K^\lambda un, un) e(uv(u+v)nY),
	$$
after combining the terms from $\pm n$ together.
	By our choice of $\wh{G}(K^\lambda n, n)$, the summand vanishes unless
	$$
	un, vn, (u+v)n \in \Big\{ \frac{k_\lambda L}{u_\lambda v_\lambda}, \frac{k_\lambda L}{u_{\lambda}(u_\lambda+v_\lambda)}, \frac{k_\lambda L}{v_\lambda(u_\lambda+v_\lambda)}\Big\} = \{u_\lambda n_\lambda, v_\lambda n_\lambda, (u_\lambda+v_\lambda)n_\lambda\},
	$$
	where $n_{\lambda} = k_\lambda L/[u_\lambda v_\lambda(u_\lambda+v_\lambda)]$. This happens only when $\sigma = \sigma_{\lambda}$, or when $\sigma = \{1,1\}$ and $\sigma_{\lambda}= \{1,2\}$. In the case $\sigma=\sigma_{\lambda}$, we have $n = n_\lambda = k_{\lambda}L/[uv(u+v)]$, and their contribution to the left-hand side is
	$$
	2 \operatorname{Re}\sum_{\lambda:\sigma_\lambda=\sigma} \eps^{7/2} \wh{h_\sigma}(k_{\lambda}) e(k_\lambda LY) = \eps^{7/2}h_\sigma(LY).
	$$
	If $\sigma =\{1,1\}$ and $\sigma_\lambda = \{1,2\}$, then $n = n_{\lambda}$ and each such term contributes at most
	$$
	|\wh{G}(K^\lambda n_{\lambda},n_\lambda)^2 \wh{G}(-2K^\lambda n_{\lambda}, -2n_{\lambda})| \ll \eps^4.
	$$ 
	This completes the proof.
\end{proof}

By translation, we may assume that $0 \in C_1$ and $0 \in C_2$, and denote by $d_1$ and $d_2$ to be the greatest common divisor of all coefficients in $C_1$ and $C_2$, respectively. Let $L$ be a sufficiently large positive integer. By combining Lemmas \ref{lem:IC-triple}, \ref{lem:IC-quadruple}, and \ref{lem:assign-Ghat}, it follows that for $i \in \{1,2\}$ and all but $O(1)$ values of $y \in \F_p$ we have
$$
I_{C_i}(f;y) = 2^{-|C_i|} + \eps^{7/2}H_i\Big(\frac{Ld_i^2y^2}{p}\Big) + O(\eps^4+p^{-1/2}),
$$
where
$$
H_i = \begin{cases} h_{\sigma}, & \text{if }C_i\text{ is a triple with shape }\sigma, \\
	h_{\sigma}+h_{\tau}, & \text{if }C_i\text{ is an additive quadruple with shapes }\sigma,\tau. \end{cases}
$$
In the next subsection, we will construct $h_{\sigma}$ for those shapes $\sigma$ involved in $C_1,C_2$ to produce $H_1,H_2$ satisfying
\begin{equation}\label{eq:H1H2}
\min\{ H_1(d_1^2Y), H_2(d_2^2Y)\} \leq -1/2
\end{equation}
for all $Y \in \T$. This would complete the proof of Proposition \ref{nspd3}.

\subsection{Choosing the trigonometric polynomials $h_{\sigma}$}\label{sec:hsigma}

\begin{lmm}\label{min}
	For any positive integers $a,b$, there are real trigonometric polynomials $h_1,h_2:\T\rightarrow \R$ of degree $O_{a,b}(1)$ with $\int h_i =0$ and $\|h_i\|_{\infty} = O(1)$ for each $i \in \{1,2\}$, such that
	$$
	\min(h_1(ax), h_2(bx)) \le -\frac{1}{2}
	$$
	for all $x \in \T$. Moreover, if $a \neq b$ then we may additionally require $h_1=h_2$.
\end{lmm}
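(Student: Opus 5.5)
The plan is to reduce everything to a covering problem for two sets. If $h$ is a trigonometric polynomial with $h\le -1/2$ on a set $S\subset\T$, then $\min(h_1(ax),h_2(bx))\le -1/2$ holds for every $x$ as soon as
$$
a^{-1}S_1\cup b^{-1}S_2=\T,\qquad a^{-1}S:=\{x\in\T: ax\in S\}.
$$
So I would first build one smoothed step with absolute constants, and then choose the sets.

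\textbf{Step 1 (a smoothed step with absolute constants).} Let $\psi:\T\to\{-1,1\}$ equal $-1$ on the arc $[0,0.55]$ and $+1$ elsewhere, so $\int\psi=-0.1$. Convolve $\psi$ with a Fejér (or de la Vallée-Poussin) kernel of a fixed degree $D$, with $D$ an absolute constant. This gives a real trigonometric polynomial $\phi$ with $\|\phi\|_\infty\le 1$, $\int\phi=-0.1$, and $|\phi-\psi|\le 0.05$ outside the $0.02$-neighbourhoods of the two jump points. Put $h=\phi+0.1$. Then $\int h=0$, $\|h\|_\infty\le 1.1$, the degree of $h$ is absolute, and $h\le -1+0.05+0.1<-1/2$ on $[0.02,0.53]$.

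\textbf{Step 2 (the case $a=b$).} Take $h_1=h$ and $h_2=h(\cdot+1/2)$. Then $h_1\le-1/2$ on $[0.02,0.53]$ and $h_2\le -1/2$ on $[0.52,1.03]$, and these two arcs cover $\T$. Hence for every $x$, one of $h_1(ax)$, $h_2(ax)$ is at most $-1/2$. All bounds here are absolute, and the same choice also works whenever $a\neq b$ if $h_1\ne h_2$ is allowed.

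\textbf{Step 3 (the case $a\ne b$ with $h_1=h_2$; the main obstacle).} First reduce to $\gcd(a,b)=1$. Writing $a=ga'$ and $b=gb'$, the map $x\mapsto gx$ is surjective on $\T$, so it suffices to treat the pair $(a',b')$. Now write $S=\T\setminus U$. The covering condition becomes: there is no $x$ with both $ax\in U$ and $bx\in U$, i.e. $a^{-1}U\cap b^{-1}U=\emptyset$.

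For a single $h$ with mean $0$, $h\le-1/2$ on $S$ and $\|h\|_\infty=O(1)$ absolute, we need $|U|\ge c$ for an absolute $c>0$. This is the crux. For $(1,2)$ the arc $U=(1/3,2/3)$ works, since $2U=(2/3,4/3)$ misses $U$. In general I would look for $U$ that is invariant under a large dilation structure. My first attempt would be a $p$-adic or lacunary ``band'' set: pick a prime $p$ with $v_p(a)\ne v_p(b)$, and let $U$ consist of $t$ whose base-$p$ digit pattern lies in alternating blocks, so that multiplying by $a$ versus $b$ shifts the pattern by different amounts and cannot keep both $ax$ and $bx$ in $U$.

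Given such a $U$ of measure $\ge c$, I would take $\tilde\psi=-1$ on a slight enlargement of $S$ and $=M$ on a slight shrinking of $U$, with $M=O(1/c)$ chosen so the mean is $0$. Then I would smooth $\tilde\psi$ with a kernel of degree $O_{a,b}(1)$ as in Step 1, which keeps $\|h\|_\infty=O(1)$ while allowing degree dependence on $a,b$.

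If the band construction fails to give an absolute $c$, the fallback is a probabilistic choice of $U$ as a random union of arcs at scale $\asymp 1/(ab)$, followed by an alteration step. I expect the difficulty there to be keeping $|U|$ bounded below independently of $a,b$, since naive greedy or independent-set bounds lose a factor $1/(a+b)$.
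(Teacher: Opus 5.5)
Your Steps~1 and~2 handle $a=b$ correctly, with absolute bounds, but for $a\neq b$ nothing is proved. The closing remark of Step~2 is false. For $(a,b)=(1,2)$ and $x=0.6$ you have $ax=0.6\notin[0.02,0.53]$ and $bx\equiv 0.2\notin[0.52,1.03]$, and indeed $h_1(0.6)=h(0.6)$ and $h_2(0.2)=h(0.7)$ both exceed $1$. What does work there is $h_1(t)=h(bt)$, $h_2(t)=h(at+\tfrac12)$, since both values are then read off at $abx$ and $abx+\tfrac12$. Step~3, which is exactly the ``moreover'' clause, is a list of strategies rather than an argument. The $p$-adic band set is never defined, and multiplying by an integer prime to $p$ does not shift base-$p$ digits. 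You also leave the random-arc fallback unresolved yourself.

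The obstacle is largely self-imposed, because the paper neither obtains nor needs an absolute bound on $\|h_i\|_\infty$. Its construction is as follows:
\begin{itemize}
\item It chooses $\theta$ with $\|(b-a)\theta\|_\T=\tfrac12$, for example $\theta=1/(2|a-b|)$, and sets $h_1=h_2=K_N(\cdot-\theta)-1$ with $K_N$ the Fej\'er kernel.
\item The inequality $\tfrac12=\|(b-a)\theta\|_\T\le b\|ax-\theta\|_\T+a\|bx-\theta\|_\T$ forces one of $ax,bx$ to lie at distance at least $1/(2(a+b))$ from $\theta$. There $K_N\ll (a+b)^2/N$, so the corresponding $h_i$ is at most $-\tfrac12$ once $N$ is large.
\end{itemize}
This gives $\|h_i\|_\infty=N-1$ with $N=N(a,b)$. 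That is harmless, since in Section~7 all implied constants may depend on $C_1,C_2$, and the lemma is only applied with $(a,b)=(d_1^2,d_2^2)$.

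If you still want an absolute sup-norm bound, your reformulation $a^{-1}U\cap b^{-1}U=\emptyset$ can be met without any arithmetic structure:
\begin{itemize}
\item Take $U=\{t:\ at\in V,\ bt\in W\}$ for disjoint arcs $V,W$. If $ax,bx\in U$, then $abx\in V\cap W$, which is impossible.
\item For coprime $a\neq b$, $|U|=\sum_j \wh{1_V}(bj)\wh{1_W}(-aj)$. For $V=[0,\tfrac12)$ and $W=[\tfrac12,1)$ this equals $\tfrac14(1-\tfrac1{ab})\ge\tfrac16$ when $a,b$ are both odd, and $\tfrac14$ otherwise.
\item After shrinking the arcs slightly, set $h(t)=M\,g(at)\,g'(bt)-\kappa$, where $g,g'$ are Fej\'er-smoothed indicators and $\kappa$ makes the mean zero. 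This $h$ has degree $O(a+b)$ and $\|h\|_\infty=O(1)$.
\end{itemize}
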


\begin{proof}
	For some $N$ large enough in terms of $a,b$, we use the Fej\'{e}r kernel $K_N:\T\rightarrow \R$,
	$$
	K_N(x) = \frac{1}{N} \Big|\sum_{j=0}^{N-1} e(jx)\Big|^2 = \frac{1}{N}\Big(\frac{\sin(\pi Nx)}{\sin (\pi x)}\Big)^2.
	$$
	Then $\int K_N = 1$, and we have the bound
	$$
	|K_N(x)| \ll \frac{1}{N\|x\|_{\T}^2}.
	$$
	Choose $\theta_1,\theta_2 \in \T$ such that $\|b\theta_1-a\theta_2\|_\T=1/2$. For example, take $(\theta_1,\theta_2) = (0, 1/(2a))$. Moreover, if $a\neq b$ then we may take $\theta_1 = \theta_2 = 1/(2|a-b|)$. Define
	$$
	h_i(x)  = K_N(x-\theta_i)-1
	$$
	for $i \in \{1,2\}$. Then $\int h_i = 0$, and $h_1=h_2$ if $a \neq b$. For any $x \in \T$, either one of $\|ax-\theta_1\|_\T$ and $\|bx-\theta_2\|_\T$ is at least $1/[2(a+b)]$, since otherwise we would have
	$$
	\|b\theta_1-a\theta_2\|_{\T} = \|-b(ax-\theta_1)+a(bx-\theta_2)\|_{\T} \le b\|ax-\theta_1\|_\T + a\|bx-\theta_2\|_\T < \frac{1}{2},
	$$
	contradicting our choice of $\theta_1,\theta_2$. It follows that
	$$
	\min(h_1(ax), h_2(bx)) =  O\Big(\frac{1}{N \max(\|ax-\theta_1\|_\T, \|bx-\theta_2\|_\T)^2}\Big) - 1 = O_{a,b}\Big(\frac{1}{N}\Big)-1,
	$$
	which can be made at most $-1/2$ by choosing $N$ large enough.
\end{proof}

Without loss of generality assume that $|C_1| \leq |C_2|$.
Apply Lemma \ref{min} with $(a,b) = (d_1^2, d_2^2)$ to produce $h_1,h_2$. For each shape $\sigma$ appearing as the shape or one of the two shapes of $C_1$ and $C_2$, we define $h_{\sigma}$ as follows:

\begin{itemize}
	\item If $C_1,C_2$ are triples with different shapes $\sigma$ and $\tau$, then set $h_{\sigma}=h_1$ and $h_{\tau}=h_2$.
	\item If $C_1,C_2$ are triples with the same shape $\sigma$, then we necessarily have $d_1 \neq d_2$ since they are not congruent, so $h_1=h_2$ from Lemma \ref{min}. Set $h_{\sigma} = h_1=h_2$.
	\item If $C_1$ is a triple with shape $\nu$ and $C_2$ is an additive quadruple with shapes $\sigma,\tau$, where $\nu$ is different from both $\sigma,\tau$, then set $h_{\nu} = h_1$, $h_{\sigma}=h_2$, $h_{\tau}=0$.
	\item If $C_1$ is a triple with shape $\sigma$ and $C_2$ is an additive quadruple with shapes $\sigma,\tau$, then set $h_{\sigma}=h_1$ and $h_{\tau} = h_2-h_1$.
	\item If $C_1,C_2$ are additive quadruples with different pairs of shapes, then we may choose $\sigma$ to be one of the two shapes of $C_1$ which is not any of the two shapes of $C_2$, and choose $\tau$ to be one of the two shapes of $C_2$ which is not any of the two shapes of $C_1$. Set $h_{\sigma}=h_1$, $h_{\tau}=h_2$, and set $h_\nu=0$ for the other shape(s) $\nu$ involved.
	\item If $C_1,C_2$ are additive quadruples with the same pair of shapes $\sigma,\tau$, then we necessarily have $d_1\neq d_2$ since they are not congruent, so $h_1=h_2$ from Lemma \ref{min}. Set $h_{\sigma}=h_1=h_2$ and $h_{\tau}=0$.
\end{itemize}

In all of the above cases, one can verify \eqref{eq:H1H2}, thus completing the proof of Proposition \ref{nspd3}.

 	\bibliographystyle{plain}
	\bibliography{spd}

\end{document}